\renewcommand\eqref[1]{(\ref{#1})}
\numberwithin{equation}{section}
\newtheorem{theorem}{Theorem}[section]
\newtheorem{definition}[theorem]{Definition}
\newtheorem{remark}[theorem]{Remark}
\newtheorem{lemma}[theorem]{Lemma}
\newtheorem{proposition}[theorem]{Proposition}
\newtheorem{corollary}[theorem]{Corollary}
\newtheorem*{theorem*}{Theorem}
\newtheorem*{assumption*}{Assumptions}
\newcommand{\Dim}{\mathrm{dim}}
\newcommand{\Rep}{\mathrm{Rep}}
\newcommand{\Hom}{\mathrm{Hom}}
\newcommand{\Aut}{\mathrm{Aut}}
\newcommand{\lv}{\left\|}
\newcommand{\rv}{\right\|}
\newcommand{\prv}{\right\|_{\ell^{p}(\widehat{G})}}
\newcommand{\psrv}{\right\|_{\ell_{sch}^{p}(\widehat{G})}}
\newcommand{\G}{\widehat{G}}
\title[]{On geometric properties of $\ell^{p}$-spaces on unitary duals of compact groups}
\author[M. Akhymbek]{Meiram Akhymbek}
\address{
	Meiram Akhymbek:
	\endgraf
    Institute of Mathematics and Mathematical Modeling
    \endgraf
    28 Shevchenko str.
    \endgraf
    050010 Almaty
    \endgraf
    Kazakhstan
    \endgraf
	{\it E-mail address} {\rm m.akhymbek@gmail.com}
		}
\author[M. Ruzhansky]{Michael Ruzhansky}
\address{
  Michael Ruzhansky:
  \endgraf
  Department of Mathematics: Analysis, Logic and Discrete Mathematics
  \endgraf
  Ghent University, Belgium
  \endgraf
 and
  \endgraf
  School of Mathematical Sciences
  \endgraf
  Queen Mary University of London
  \endgraf
  United Kingdom
  \endgraf
  {\it E-mail address} {\rm michael.ruzhansky@ugent.be}
  }
\begin{document}

\subjclass[2020]{Primary 22A10, 22D05, 46B20; Secondary 46L52, 46B70.}
\keywords{Compact group, $\ell^{p}$-spaces associated with compact groups, uniformly smooth Banach space, uniformly convex Banach space, Clarkson inequality, type and cotype, duality, complex interpolation.}

\begin{abstract} 
In this paper, we study geometric properties of $\ell^{p}$-spaces associated with the unitary dual of a compact group. More precisely, we prove uniform smoothness, uniform convexity, Clarkson type inequalities, Kadec-Klee property, as well as type and cotype properties of such spaces. We also present duality and complex interpolation results. 
\end{abstract}

\maketitle

\tableofcontents

\section{Introduction}

The notion of uniformly convex spaces was first introduced by Clarkson \cite{clarkson1936uniformly}. A Banach space $X$ is said to be uniformly convex if for each $0<\varepsilon\leq 2$ there exists $\delta(\varepsilon)>0$ such that 
$$\lv x\rv=\lv y\rv=1,\quad \lv x-y\rv=\varepsilon$$ 
implies
$$\lv \frac{x+y}{2}\rv\leq 1-\delta(\varepsilon).$$
It can shortly be understood in geometrical terms as follows: the mid-point of an arbitrary chord of the unit sphere of the space cannot approach to the surface of that sphere unless the length of the chord goes to zero \cite{clarkson1936uniformly}.

Note that any finite dimensional Euclidean space $\mathbb{R}^{n},$ $n\geq 1,$ and any Hilbert space $\mathcal{H}$ is clearly uniformly convex due to the parallelogram identity 
\begin{equation}\label{parallelogram identity}
\lv x+y\rv^2+\lv x-y\rv^2=2(\lv x\rv^2+\lv y\rv^2).
\end{equation}
In the same paper \cite[Section 3]{clarkson1936uniformly}, Clarkson proved that the classical Lebesgue spaces $L^{p}(\mu)$ and $\ell^{p},$ for $1<p<\infty$, satisfy this property too, i.e., they are uniformly convex. 

Another important notion is the uniform smoothness of a Banach space, which is a dual property of the uniform convexity. A Banach space $X$ is said to be uniformly smooth if the expression 
$$\sup\{1-\lv \frac{x+y}{2}\rv:\quad \lv x\rv=\lv y\rv=1,\quad\lv x-y\rv\leq 2\tau\}$$
equals to $o(\tau)$ as $\tau\to 0.$

Equivalent definitions of uniform convexity and uniform smoothness via the modulus of convexity and the modulus of smoothness, respectively, can be found in Section \ref{section type} (Definitions \ref{modulus} and \ref{uniform convexity and smoothness}).

In this paper, we investigate the uniform convexity and uniform smoothness of noncommutative $\ell^p$-spaces on unitary dual of a compact group $G$ based on the Schatten-von Neumann ideals (see, Theorem \ref{mainth0}), simply denoted as $\ell_{sch}^{p}(\widehat{G})$ \cite[Section 2.1.4]{fischer2016quantization} (see also \cite[Section 2.14.2]{edwards1972integration}, \cite{hewitt2013abstract}). These $\ell_{sch}^{p}(\widehat{G})$ spaces based on Schatten-von Neumann ideals are the generalization of $\ell^p$-spaces over the compact group $G,$ denoted as $\ell^{p}(G)$ (see \cite{edwards1972integration}, \cite{fischer2016quantization}, \cite{hewitt2013abstract}, \cite{ruzhansky2010pseudo}). One of the known applications of $\ell_{sch}^{p}(\G)$ spaces is the Hausdorff-Young theorem for all compact groups \cite[Section 2.14.1]{edwards1972integration}.

In order to show the uniform convexity and uniform smoothness of $L^p(\mu)$ (or $\ell^p$) spaces, Clarkson proved various inequalities in these spaces close to the parallelogram identity \eqref{parallelogram identity} (see \cite[Theorem 2]{clarkson1936uniformly} for more details). These inequalities are known as Clarkson inequalities. Hence, our first aim is to obtain Clarkson type inequalities in spaces $\ell_{sch}^{p}(\G)$ (see Proposition \ref{new Clarkson type inequality}). Moreover, we prove complex interpolation (see, Subsection \ref{subsectioninterpolation}) and duality results (see, Subsection \ref{subsectiondual}) for the given spaces $\ell_{sch}^{p}(\G)$ similar to the classical Lebesgue spaces. These results act as important tools to obtain Clarkson type inequalities and have their own importance.
 
Another important classical geometrical property of Banach spaces is defining the type and cotype of the given Banach space. The type and cotype of a Banach space are closely related to the above mentioned uniform convexity and uniform smoothness (see, for example, \cite[Theorem 1.e.16]{lindenstrauss2013classical} and \cite{Figiel1976}). A Banach space is said to be of type $p$ for some $1\leq p\leq 2$ (resp. of cotype $q$ for some $2\leq q\leq \infty)$ if there exists a constant $M>0$ such that 
$$\left(\frac{1}{2^n}\sum_{\theta_{j}=\pm1}\lv \sum_{j=1}^{n}\theta_{j}x_{j}\rv^2\right)^{1/2}\leq M\left(\sum_{j=1}^{n}\lv x_{j}\rv^{p}\right)^{1/p},$$
$$\left(\text{resp.}\quad\left(\frac{1}{2^n}\sum_{\theta_{j}=\pm1}\lv \sum_{j=1}^{n}\theta_{j}x_{j}\rv^2\right)^{1/2}\geq M\left(\sum_{j=1}^{n}\lv x_{j}\rv^{q}\right)^{1/q}\right),$$
for every finite number of vectors $\{x_{j}\}_{j=1}^{n}$ in $X$, where the left hand side means the average value over all $2^{n}$ terms with possible combinations of $\theta_{j}=1$ and $\theta_{j}=-1$, $1\leq j\leq n.$

For example, it is well known that the classical Lebesgue space $L^p(\mu), 1\leq p<\infty,$ is of type $\min\{2,p\}$ and cotype $\max\{2,p\}$. Furthermore, any Hilbert space is obviously of type $2$ and cotype $2$ due to the parallelogram identity. The converse statement is also true, that any space which is simultaneously of type 2 and cotype 2, is isomorphic to a Hilbert space \cite[Proposition 3.1]{kwapien1972isomorphic}. In this paper, we prove that the spaces $\ell^{p}(\G)$ and $\ell_{sch}^{p}(\G)$ have the same type and cotype properties as the usual Lebesgue spaces $L^p(\mu)$.

It is a folklore that a Banach space is a Hilbert space if and only if one has the parallelogram identity. Hence, the notions of type and cotype measure how bad the parallelogram identity gets, i.e., how far a Banach space is from being a Hilbert space. Since in general the parallelogram identity does not hold in an arbitrary Banach space, one has to investigate its inequality form closely related to the above mentioned Clarkson inequalities. Hence, using the obtained Clarkson type inequalities, we also consider type and cotype properties of $\ell_{sch}^{p}(\G)$.

We also prove a modified version of the Clarkson type inequalities with some constants involved (see Theorem \ref{mainth2}).
The earlier versions of these inequalities in $L^{p}$ spaces and Schatten-von Neumann ideals $\mathcal{S}^p$ were used to confirm the conjecture by Gross, which arose in his work in quantum field theory (see \cite{carlen1993optimal} and \cite{ball2002sharp} for more details). Various other application of these type of inequalities can also be found in \cite{pisier1999volume}. Moreover, using the above mentioned results, we also present type and cotype properties of $\ell_{sch}^{p}(\G)$.

Furthermore, in the last section of the paper, we shortly present some of the above mentioned properties for a class of noncommutative $\ell^p$-spaces on unitary dual of compact group $G$, based on Hilbert-Schmidt ideal, denoted by $\ell^{p}(\G)$, introduced in \cite[Section 10.3.3]{ruzhansky2010pseudo} (see also \cite[Section 2.1.4]{fischer2016quantization} for more details on these spaces). The Clarkson type inequalities and the reflexivity result for the given space was earlier investigated by the authors in \cite{tulenov2018clarkson}.

\section{Preliminaries}

In this section, we recall the necessary preliminaries and the basics of the main object of this paper, the noncommutative $\ell^{p}$-spaces associated with a compact group. Throughout this paper a group is always assumed to be compact group (shortly, group), i.e., a topological group which is compact as a topological space (see, for example, \cite[Section III.7.3]{ruzhansky2010pseudo}). 

Let $G$ and $H$ be the groups. By $\mathrm{Hom}(G,H)$, we denote the set of all (group) homomorphisms from $G$ into $H.$ When $G=H$, we shortly write $\Hom(G)$ instead of $\Hom(G,G).$ The set of all bijective homomorphisms from $G$ onto $G$ is denoted by $\mathrm{Aut}(G).$

Throughout this paper, by $\left(\mathcal{L}(\mathcal{H}), \lv \cdot\rv_{\mathcal{L}(\mathcal{H})}\right)$ and $\left(\mathcal{S}^{p}(\mathcal{H}), \lv\cdot\rv_{\mathcal{S}^{p}(\mathcal{H})}\right),$ $1\leq p\leq \infty$, we denote the $\ast$-algebra of all bounded linear operators on a Hilbert space $\mathcal{H}$ and a Schatten-von Neumann ideal of compact operators on $\mathcal{H}$, respectively (see, \cite[Chapter 2]{simon2005trace} or \cite[Chapter 3]{gohberg1969translations}.  

\subsection{Strongly continuous irreducible unitary representations}

Let $\mathcal{H}$ be a complex Hilbert space equipped with an inner product $\langle\cdot,\cdot \rangle_{\mathcal{H}}$. The unitary group of $\mathcal{H}$ is 
$$\mathcal{U}(\mathcal{H}):=\{A\in\Aut(\mathcal{H})\mid \forall v,w\in \mathcal{H} : \langle Av, Aw\rangle_{\mathcal{H}}=\langle v,w\rangle_{\mathcal{H}}\},$$
i.e., $\mathcal{U}(\mathcal{H})$ contains the unitary linear bijections from $\mathcal{H}$ into $\mathcal{H}.$
A representation of a group $G$ on a Hilbert space $\mathcal{H}_{\phi}$ is any $\phi\in\mathrm{Hom}(G,\Aut(\mathcal{H}_{\phi}))$. A representation $\psi\in\Hom(G,\mathcal{U}(\mathcal{H}_{\phi}))$ is called a unitary representation.

Let $A$ be a bijective homomorphism from $\mathcal{H}$ onto $\mathcal{H}$, i.e., $A\in\Hom(\mathcal{H},\mathcal{H})$. A subspace $\mathcal{K}\subset \mathcal{H}$ is called $A$-invariant if $A\mathcal{K}\subset \mathcal{K}$.

Let $\phi\in\Hom(G,\Aut(H_{\phi})).$ A subspace $\mathcal{K}\subset \mathcal{H}_{\phi}$ is called $\phi$-invariant if $\mathcal{K}$ is $\phi(x)$-invariant for all $x\in G.$ A representation $\phi\in\Hom(G,\Aut(\mathcal{H}_{\phi}))$ is called irreducible if the trivial subspaces $\{0\}$ and $\mathcal{H}_{\phi}$ are the only $\phi$-invariant subspaces of $\mathcal{H}.$      

Let the unitary group $\mathcal{U}(H_{\phi})$ be endowed with a strong operator topology inherited from the class of all linear bounded operators $\mathcal{L}(H_{\phi})$. Then a representation $\phi\in\Hom(G,\mathcal{U}(\mathcal{H}_{\phi}))$ is strongly continuous if 
$$x\mapsto \phi(x) v: G\rightarrow \mathcal{H}_{\phi}$$
is continuous for all $v\in \mathcal{H}_{\phi}.$

By $\Rep(G)$, we denote the set of all strongly continuous
irreducible unitary representations of $G$. The strongly continuous irreducible unitary representations $\phi\in\Hom(G,\mathcal{U}(\mathcal{H}_{\phi}))$ and $\psi\in\Hom(G,\mathcal{U}(\mathcal{H}_{\psi}))$ are said to be equivalent if there exists invertible linear mapping $A:\mathcal{H}_{\phi}\rightarrow \mathcal{H}_{\psi}$ such that 
$$A\phi(x)=\psi(x) A$$
for all $x\in G.$ More detailed discussion of the above defined notions can be found in \cite{ruzhansky2010pseudo}.

\subsection{Noncommutative $\ell^{p}$-spaces associated with a compact group}

Let $G$ be a compact group. By $\widehat{G}$, we denote the unitary dual of $G$, i.e., the set of all equivalence classes of irreducible unitary representations from $\Rep(G)$ (see \cite[Definition 7.5.7 and 10.2.1]{ruzhansky2010pseudo}). Let $[\xi]\in\widehat{G}$ denote the equivalence class of a strongly continuous irreducible unitary representation $\xi:G\rightarrow \mathcal{U}(\mathcal{H}_{\xi}),$ where $\mathcal{H}_{\xi}$ is a representation space and note that $\mathcal{H}_{\xi}$ is finite dimensional since $G$ is compact. We also set $\dim(\xi)=\dim(\mathcal{H}_{\xi})$.

The space $\mathcal{M}(\widehat{G})$ consists of all mappings 
$$F:\widehat{G}\rightarrow \bigcup_{[\xi]\in\widehat{G}}\mathcal{L}(\mathcal{H}_{\xi})\subset\bigcup_{m=1}^{\infty}\mathbb{C}^{m\times m},$$
satisfying $F([\xi])\in\mathcal{L}(\mathcal{H}_{\xi})$ for every $[\xi]\in\widehat{G}$. Note that in matrix representations, one can view $F([\xi])\in\mathbb{C}^{\Dim(\xi)\times \Dim(\xi)}$ as a $\Dim(\xi)\times \Dim(\xi)$ matrix.

Let $\langle\xi \rangle:=\sqrt{1+\lambda_{[\xi]}^{2}},$ where $\lambda_{[\xi]}, [\xi]\in\G$, denotes the corresponding eigenvalue of the positive Laplacian (in a bijective manner) indexed by an equivalence class $[\xi]\in\G$ (for more details, see \cite[Definition 10.3.18]{ruzhansky2010pseudo}). The space $S'(\widehat{G})$ of slowly increasing or tempered distributions on the unitary dual $\widehat{G}$ is defined as the space of all $H\in\mathcal{M}(\widehat{G})$ for which there exists some $k\in\mathbb{N}$ such that
$$\sum_{[\xi]\in\widehat{G}}\Dim(\xi)\langle\xi\rangle^{-k}\lv H(\xi)\rv_{\mathcal{S}^{2}(\mathcal{H}_{\xi})}<\infty,$$
where $\lv\cdot\rv_{\mathcal{S}^{2}(\mathcal{H}_{\xi})}:=\lv \cdot\rv_{\mathcal{S}^{2}}$ is a Hilbert-Schmidt norm. The convergence in $S'(\widehat{G})$ is defined as follows: the sequence $H_{j}\in S'(\widehat{G})$ is said to be converging to $H\in S'(\widehat{G})$ in $S'(\widehat{G})$ as $j\rightarrow\infty$, if there exists some $k\in\mathbb{N}$ such that 
$$\sum_{[\xi]\in\widehat{G}}\Dim(\xi)\langle \xi\rangle^{-k}\lv H_{j}(\xi)-H(\xi)\rv_{\mathcal{S}^{2}(\mathcal{H}_{\xi})}\rightarrow0,\quad j\rightarrow\infty.$$

We now define $\ell^{p}$-spaces over the unitary dual $\widehat{G}$ of a compact group $G$ based on the Hilbert-Schmidt ideal.

\begin{definition}
\cite[Definition 10.3.36]{ruzhansky2010pseudo}
For $1\leq p<\infty$, the space $\ell^{p}(\widehat{G})\equiv \ell^{p}\left(\widehat{G},\Dim(\xi)^{p(\frac{2}{p}-\frac{1}{2}})\right)$ is given as the space of all $H\in S'(\widehat{G})$ such that
\begin{equation*}
\|H\|_{\ell^{p}(\widehat{G})}:=\left(\sum_{[\xi]\in\widehat{G}}(\Dim(\xi))^{p\left(\frac{2}{p}-\frac{1}{2}\right)}\lv H(\xi)\rv_{\mathcal{S}^{2}(\mathcal{H}_{\xi})}^{p}\right)^{1/p}<\infty.
\end{equation*}

For $p=\infty$, the space $\ell^{\infty}(\widehat{G})$ is given as the space of all $H\in S'(\widehat{G})$ such that 
\begin{equation*}
\|H\|_{\ell^{\infty}(\widehat{G})}:=\sup_{[\xi]\in\widehat{G}}(\Dim(\xi))^{-1/2}\lv H(\xi)\rv_{\mathcal{S}^{2}(\mathcal{H}_{\xi})}<\infty.
\end{equation*}
\end{definition}

We also present more general class of $\ell^p$-spaces over the unitary dual $\widehat{G}$ of a compact group $G$ based on the Schatten-von Neumann ideals. 

\begin{definition}\label{Schatten family of lp spaces}\cite[Section 2.1.4]{fischer2016quantization} (see also \cite[Section 2.14.2]{edwards1972integration})
For $1\leq p<\infty$, the space $\ell_{sch}^{p}(\widehat{G})$ is given as the space of all $H\in S'(\widehat{G})$ such that
\begin{equation*}
\|H\|_{\ell_{sch}^{p}(\widehat{G})}:=\left(\sum_{[\xi]\in\widehat{G}}\Dim(\xi)\lv H(\xi)\rv_{\mathcal{S}^{p}(\mathcal{H}_{\xi})}^{p}\right)^{1/p}<\infty,
\end{equation*} 
where $\lv\cdot\rv_{\mathcal{S}^{p}(\mathcal{H}_{\xi})}$ is the Schatten norm.

For $p=\infty$, the space $\ell_{sch}^{\infty}(\widehat{G})$ is given as the space of all $H\in S'(\widehat{G})$ such that 
\begin{equation*}
\|H\|_{\ell_{sch}^{\infty}(\widehat{G})}:=\sup_{[\xi]\in\widehat{G}}\lv H(\xi)\rv_{\mathcal{L}(\mathcal{H}_{\xi})}<\infty.
\end{equation*}    
\end{definition}

Note that the defined spaces $\left(\ell^{p}(\widehat{G}), \lv \cdot\prv\right)$ and $\left(\ell_{sch}^{p}(\widehat{G}), \lv \cdot\psrv\right)$ are Banach spaces for all $1\leq p\leq \infty$ (see, for example, \cite[Section 10.3.3]{ruzhansky2010pseudo} and \cite[Section 2.14.2]{edwards1972integration}, respectively).

There is a connection between $\ell^{p}(\widehat{G})$ and $\ell_{sch}^{p}(\widehat{G})$ for $1\leq p\leq \infty$ from \cite[Section 2.1.4]{fischer2016quantization}.

\begin{proposition}\cite[Proposition 2.1.6]{fischer2016quantization}
For $1\leq p\leq 2$, one has a continuous embedding $\ell^{p}(\widehat{G})\hookrightarrow \ell_{sch}^{p}(\widehat{G})$ and 
$$\lv H\psrv\leq \lv H\prv,\quad \forall H\in \ell^{p}(\widehat{G}).$$
For $2\leq p\leq \infty$, one has a continuous embedding $\ell_{sch}^{p}(\widehat{G})\hookrightarrow\ell^{p}(\G)$ and 
$$\lv H\prv\leq \lv H\psrv,\quad \forall H\in \ell_{sch}^{p}(\G).$$
\end{proposition}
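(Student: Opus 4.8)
The plan is to reduce both claimed embeddings to a purely pointwise (that is, per–representation) comparison of Schatten norms, since both quantities $\lv H\prv$ and $\lv H\psrv$ are $\ell^{p}$-type sums over $\widehat{G}$ of the terms $\lv H(\xi)\rv_{\mathcal{S}^{2}(\mathcal{H}_{\xi})}$ or $\lv H(\xi)\rv_{\mathcal{S}^{p}(\mathcal{H}_{\xi})}$ weighted by powers of $\Dim(\xi)$. Because $G$ is compact, each $\mathcal{H}_{\xi}$ is finite-dimensional, so writing $d_{\xi}:=\Dim(\xi)$ I may view $H(\xi)$ as a $d_{\xi}\times d_{\xi}$ matrix and reason with its singular values $s_{1}(\xi)\geq\cdots\geq s_{d_{\xi}}(\xi)\geq 0$. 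Since both $\ell^{p}(\G)$ and $\ell_{sch}^{p}(\G)$ are defined as subspaces of $S'(\widehat{G})$, in each case the embedding is literally the inclusion map, and continuity will follow at once from the stated norm inequality (indeed with constant one).

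The first ingredient I would record is the elementary finite-dimensional comparison between the $\mathcal{S}^{p}$- and $\mathcal{S}^{2}$-norms. Applying H\"older's inequality to the singular value sequence $(s_{i}(\xi))_{i=1}^{d_{\xi}}$, using that there are $d_{\xi}$ indices, gives for $1\leq p\leq 2$
$$\lv H(\xi)\rv_{\mathcal{S}^{p}(\mathcal{H}_{\xi})}\leq d_{\xi}^{\frac1p-\frac12}\lv H(\xi)\rv_{\mathcal{S}^{2}(\mathcal{H}_{\xi})},$$
and for $2\leq p<\infty$ the reverse-direction estimate
$$\lv H(\xi)\rv_{\mathcal{S}^{2}(\mathcal{H}_{\xi})}\leq d_{\xi}^{\frac12-\frac1p}\lv H(\xi)\rv_{\mathcal{S}^{p}(\mathcal{H}_{\xi})}.$$
For $p=\infty$ the corresponding statement is $\lv H(\xi)\rv_{\mathcal{S}^{2}(\mathcal{H}_{\xi})}\leq d_{\xi}^{1/2}\lv H(\xi)\rv_{\mathcal{L}(\mathcal{H}_{\xi})}$, which is immediate from $s_{i}(\xi)\leq\max_{j}s_{j}(\xi)=\lv H(\xi)\rv_{\mathcal{L}(\mathcal{H}_{\xi})}$.

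With these in hand the rest is bookkeeping with the weights. For $1\leq p\leq 2$ I would raise the first inequality to the $p$-th power, multiply by $d_{\xi}$, and use the identity $d_{\xi}\cdot d_{\xi}^{\,p(\frac1p-\frac12)}=d_{\xi}^{\,2-\frac p2}=d_{\xi}^{\,p\left(\frac{2}{p}-\frac{1}{2}\right)}$, so that summing over $[\xi]\in\widehat{G}$ yields exactly $\lv H\psrv^{p}\leq\lv H\prv^{p}$. For $2\leq p<\infty$ the same computation driven by the reverse inequality produces $\lv H\prv^{p}\leq\lv H\psrv^{p}$, and the case $p=\infty$ follows by taking suprema in the last displayed pointwise bound. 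In every case the constant is precisely one, which also shows the relevant inclusion is norm-nonincreasing, hence a continuous embedding.

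The only point requiring genuine care—and the reason the estimate is clean rather than merely an embedding up to some constant—is that the two dimensional weights $d_{\xi}$ and $d_{\xi}^{\,p\left(\frac{2}{p}-\frac{1}{2}\right)}$ are arranged so that the $p$-th power of the H\"older dimensional factor $d_{\xi}^{\,|\frac1p-\frac12|}$ cancels the gap between them exactly. I expect the main (though modest) obstacle to be verifying this cancellation with the correct exponent and the correct direction of inequality in each range of $p$; once the pointwise Schatten comparisons are fixed, no further analysis is needed.
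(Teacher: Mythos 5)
Your argument is correct: the pointwise H\"older comparison between the $\mathcal{S}^{p}$- and $\mathcal{S}^{2}$-norms of a $d_{\xi}\times d_{\xi}$ matrix, combined with the exponent identity $1+p(\tfrac1p-\tfrac12)=p(\tfrac2p-\tfrac12)=2-\tfrac p2$, yields exactly the stated norm inequalities with constant one, including the $p=\infty$ case. The paper itself gives no proof of this proposition (it is quoted from \cite[Proposition 2.1.6]{fischer2016quantization}), and your argument is the standard one used there, so there is nothing to compare beyond confirming that your computation is complete and correct.
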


We now present H\"{o}lder's inequality in $\ell_{sch}^{p}(\widehat{G})$ spaces. Since we could not find its proof elsewhere, we provide its proof for the convenience of the reader.
\begin{proposition}\label{Holder inequality}
Let $1\leq p,q,r\leq \infty$ such that $\frac{1}{r}=\frac{1}{p}+\frac{1}{q}.$ Then, for any $H_{1}\in\ell_{sch}^{p}(\G)$ and $H_{2}\in\ell_{sch}^{q}(\G),$ we have 
\begin{equation}\label{Holder}
\lv H_{1}H_{2}\rv_{\ell_{sch}^{r}(\G)}\leq \lv H_{1}\rv_{\ell_{sch}^{p}(\G)}\lv H_{2}\rv_{\ell_{sch}^{q}(\G)}.
\end{equation}
\end{proposition}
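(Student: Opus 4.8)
The plan is to reduce the whole statement to two classical Hölder inequalities stacked on top of each other: the Schatten–von Neumann Hölder inequality applied fibrewise at each $[\xi]\in\G$, and the ordinary scalar Hölder inequality for the weighted sum over $\G$. I would first dispose of the case $1\le p,q,r<\infty$ and then record the minor changes needed when an exponent equals $\infty$.

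First I would fix $[\xi]\in\G$ and invoke the Schatten Hölder inequality (see \cite[Chapter 2]{simon2005trace} or \cite[Chapter 3]{gohberg1969translations}), which, since the product is taken fibrewise as a matrix product $(H_{1}H_{2})(\xi)=H_{1}(\xi)H_{2}(\xi)$, yields
\begin{equation*}
\lv H_{1}(\xi)H_{2}(\xi)\rv_{\mathcal{S}^{r}(\mathcal{H}_{\xi})}\le \lv H_{1}(\xi)\rv_{\mathcal{S}^{p}(\mathcal{H}_{\xi})}\lv H_{2}(\xi)\rv_{\mathcal{S}^{q}(\mathcal{H}_{\xi})}.
\end{equation*}
Raising this to the power $r$ and substituting into Definition \ref{Schatten family of lp spaces}, I obtain an upper bound for $\lv H_{1}H_{2}\rv_{\ell_{sch}^{r}(\G)}^{r}$ of the shape $\sum_{[\xi]}\Dim(\xi)\,\lv H_{1}(\xi)\rv_{\mathcal{S}^{p}(\mathcal{H}_{\xi})}^{r}\lv H_{2}(\xi)\rv_{\mathcal{S}^{q}(\mathcal{H}_{\xi})}^{r}$.

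The one place where the hypothesis $\frac{1}{r}=\frac{1}{p}+\frac{1}{q}$ enters decisively is the algebraic identity obtained by multiplying it by $r$, namely $\frac{r}{p}+\frac{r}{q}=1$. This lets me split the dimension weight as $\Dim(\xi)=\Dim(\xi)^{r/p}\Dim(\xi)^{r/q}$ and view each summand as the product of $a_{\xi}:=\Dim(\xi)^{r/p}\lv H_{1}(\xi)\rv_{\mathcal{S}^{p}(\mathcal{H}_{\xi})}^{r}$ and $b_{\xi}:=\Dim(\xi)^{r/q}\lv H_{2}(\xi)\rv_{\mathcal{S}^{q}(\mathcal{H}_{\xi})}^{r}$. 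Applying scalar Hölder with the conjugate exponents $p/r$ and $q/r$ (conjugate precisely because $\frac{r}{p}+\frac{r}{q}=1$) converts $\sum_{[\xi]}a_{\xi}b_{\xi}$ into the product of $\bigl(\sum_{[\xi]}\Dim(\xi)\lv H_{1}(\xi)\rv_{\mathcal{S}^{p}(\mathcal{H}_{\xi})}^{p}\bigr)^{r/p}$ and $\bigl(\sum_{[\xi]}\Dim(\xi)\lv H_{2}(\xi)\rv_{\mathcal{S}^{q}(\mathcal{H}_{\xi})}^{q}\bigr)^{r/q}$, which is exactly $\lv H_{1}\rv_{\ell_{sch}^{p}(\G)}^{r}\lv H_{2}\rv_{\ell_{sch}^{q}(\G)}^{r}$. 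Taking $r$-th roots settles the finite-exponent case.

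The endpoints I would treat separately but they are routine. If $r=\infty$, the constraint forces $p=q=\infty$, and the claim follows from submultiplicativity of the operator norm, $\lv H_{1}(\xi)H_{2}(\xi)\rv_{\mathcal{L}(\mathcal{H}_{\xi})}\le \lv H_{1}(\xi)\rv_{\mathcal{L}(\mathcal{H}_{\xi})}\lv H_{2}(\xi)\rv_{\mathcal{L}(\mathcal{H}_{\xi})}$, followed by taking suprema. If exactly one of $p,q$ is infinite, say $p=\infty$ (which forces $r=q$), I would use the fibrewise estimate $\lv H_{1}(\xi)H_{2}(\xi)\rv_{\mathcal{S}^{r}(\mathcal{H}_{\xi})}\le \lv H_{1}(\xi)\rv_{\mathcal{L}(\mathcal{H}_{\xi})}\lv H_{2}(\xi)\rv_{\mathcal{S}^{r}(\mathcal{H}_{\xi})}$ and pull $\sup_{[\xi]}\lv H_{1}(\xi)\rv_{\mathcal{L}(\mathcal{H}_{\xi})}$ out of the sum. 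I do not expect a genuine obstacle; the only point requiring care is the weight-splitting step, where one must check that after the scalar Hölder step the factors $\Dim(\xi)^{r/p}$ and $\Dim(\xi)^{r/q}$ recombine into the single weight $\Dim(\xi)$ demanded by each factor's native norm — and this is dictated exactly by $\frac{r}{p}+\frac{r}{q}=1$ rather than by any coincidence.
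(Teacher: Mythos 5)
Your proposal is correct and follows essentially the same route as the paper's own proof: the fibrewise Schatten--von Neumann H\"older inequality, the splitting of the weight $\Dim(\xi)=\Dim(\xi)^{r/p}\Dim(\xi)^{r/q}$ followed by scalar H\"older with conjugate exponents $p/r$ and $q/r$, and the same separate treatment of the endpoint cases $r=\infty$ and $p=\infty$, $q=r$. No gaps.
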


\begin{proof} Assume first that $r=\infty$, then it obviously follows that $p=q=\infty$. Since the uniform norm $\lv\cdot\rv_{\mathcal{L}(\mathcal{H}_{\xi})}$ is submultiplicative, it follows that
$$\lv H_{1}(\xi)H_{2}(\xi)\rv_{\mathcal{L}(\mathcal{H}_{\xi})}\leq \lv H_{1}(\xi)\rv_{\mathcal{L}(\mathcal{H}_{\xi})}\lv H_{2}(\xi)\rv_{\mathcal{L}(\mathcal{H}_{\xi})}\leq \sup_{[\xi]\in\G}\lv H_{1}(\xi)\rv_{\mathcal{L}(\mathcal{H}_{\xi})}\sup_{[\xi]\in\G}\lv H_{2}(\xi)\rv_{\mathcal{L}(\mathcal{H}_{\xi})}.$$
Hence, 
$$\lv H_{1}H_{2}\rv_{\ell_{sch}^{\infty}(\G)}=\sup_{[\xi]\in\G}\lv H_{1}(\xi)H_{2}(\xi)\rv_{\mathcal{L}(\mathcal{H}_{\xi})} \leq \sup_{[\xi]\in\G}\lv H_{1}(\xi)\rv_{\mathcal{L}(\mathcal{H}_{\xi})}\sup_{[\xi]\in\G}\lv H_{2}(\xi)\rv_{\mathcal{L}(\mathcal{H}_{\xi})}$$
$$=\lv H_{1}\rv_{\ell_{sch}^{\infty}(\G)}\lv H_{2}\rv_{\ell_{sch}^{\infty}(\G)}.$$

Suppose now that $r<\infty$, then at most one of the numbers $p$ and $q$ might equal to infinity. Without loss of generality assume that $p=\infty$ and $q=r.$ Recall that 
$$\lv AB \rv_{\mathcal{S}^{p}(\mathcal{H})}\leq \lv A\rv_{\mathcal{S}^{p}(\mathcal{H})}\lv B\rv_{\mathcal{L}(\mathcal{H})}$$
for any $A\in\mathcal{S}^{p}(\mathcal{H})$ and any bounded operator $B\in\mathcal{L}(\mathcal{H})$ (see, for example, \cite[Section II.2]{gohberg1969translations}). Hence, we have that
$$\lv H_{1}H_{2}\rv_{\ell_{sch}^r(\G)}=\left(\sum_{[\xi]\in\G}\Dim(\xi)\lv H_{1}(\xi)H_{2}(\xi)\rv_{\mathcal{S}^{r}(\mathcal{H}_{\xi})}^r\right)^{\frac{1}{r}}$$
$$\leq \left(\sum_{[\xi]\in\G}\Dim(\xi)\lv H_{1}(\xi)\rv_{\mathcal{L}(\mathcal{H}_{\xi})}^r\lv H_{2}(\xi)\rv_{\mathcal{S}^{r}(\mathcal{H}_{\xi})}^r\right)^{\frac{1}{r}}\leq \sup_{[\xi]\in\G}\lv H_{1}(\xi)\rv_{\mathcal{L}(\mathcal{H}_{\xi})}\left(\sum_{[\xi]\in\G}\Dim(\xi)\lv H_{2}(\xi)\rv_{\mathcal{S}^{r}(\mathcal{H}_{\xi})}^r\right)^{\frac{1}{r}}$$
$$=\lv H_{1}\rv_{\ell_{sch}^{\infty}(\G)}\lv H_{2}\rv_{\ell_{sch}^{r}(\G)}.$$

Finally it is left to consider the case $p,q,r<\infty.$ Note that, by H\"older's inequality in Schatten-von Neumann ideals, we have
\begin{equation*}\begin{split}
\sum_{[\xi]\in \G}\Dim(\xi)\lv (H_{1}H_{2})(\xi)\rv_{\mathcal{S}^{r}(\mathcal{H}_{\xi})}^r\leq \sum_{[\xi]\in\G}\Dim(\xi)^{r/p} \lv H_{1}(\xi)\rv_{\mathcal{S}^{p}(\mathcal{H}_{\xi})}^r\cdot \Dim(\xi)^{r/q}\lv H_{2}(\xi)\rv_{\mathcal{S}^q(\mathcal{H}_{\xi})}^r.
\end{split}\end{equation*}
Hence, by H\"older's inequality in classical $\ell^{p}$-spaces, it follows that 
$$\sum_{[\xi]\in\G}\Dim(\xi)^{r/p} \lv H_{1}(\xi)\rv_{\mathcal{S}^{p}(\mathcal{H}_{\xi})}^r\cdot \Dim(\xi)^{r/q}\lv H_{2}(\xi)\rv_{\mathcal{S}^q(\mathcal{H}_{\xi})}^r$$
$$\leq \left(\sum_{[\xi]\in\G}\left(\Dim(\xi)^{\frac{r}{p}}\right)^{\frac{p}{r}} \left(\lv H_{1}(\xi)\rv_{\mathcal{S}^{p}(\mathcal{H}_{\xi})}^r\right)^{\frac{p}{r}}\right)^{\frac{r}{p}}\cdot \left(\sum_{[\xi]\in\G}\left(\Dim(\xi)^{\frac{r}{q}}\right)^{\frac{q}{r}} \left(\lv H_{2}(\xi)\rv_{\mathcal{S}^{q}(\mathcal{H}_{\xi})}^r\right)^{\frac{q}{r}}\right)^{\frac{r}{q}}$$
$$=\left(\sum_{[\xi]\in\G}\Dim(\xi) \lv H_{1}(\xi)\rv_{\mathcal{S}^{p}(\mathcal{H}_{\xi})}^p\right)^{\frac{r}{p}}\cdot \left(\sum_{[\xi]\in\G}\Dim(\xi) \lv H_{2}(\xi)\rv_{\mathcal{S}^{q}(\mathcal{H}_{\xi})}^q\right)^{\frac{r}{q}}$$
$$=\lv H_{1}\rv_{\ell_{sch}^{p}(\G)}^{r}\lv H_{2}\rv_{\ell_{sch}^{q}(\G)}^r.$$
Therefore,
$$\sum_{[\xi]\in \G}\Dim(\xi)\lv (H_{1}H_{2})(\xi)\rv_{\mathcal{S}^{r}(\mathcal{H}_{\xi})}^r\leq \lv H_{1}\rv_{\ell_{sch}^{p}(\G)}^{r}\lv H_{2}\rv_{\ell_{sch}^{q}(\G)}^{r}<\infty,$$
and $H_{1}H_{2}\in\ell_{sch}^{r}(\G).$
This means 
$$\lv H_{1}H_{2}\rv_{\ell_{sch}^{r}(\G)}\leq \lv H_{1}\rv_{\ell_{sch}^{p}(\G)}\lv H_{2}\rv_{\ell_{sch}^{q}(\G)},$$
completing the proof.
\end{proof}

Let $H\in \ell^{p}(\widehat{G})$ or $H\in \ell_{sch}^{p}(\widehat{G})$. By $H^{\ast}$, we denote the adjoint element of $H$, which is defined as 
$$\left(H^{\ast}\right)(\xi)=\left(H(\xi)\right)^{\ast},\quad [\xi]\in\widehat{G}.$$ 
Similarly, we define its absolute value $|H|$ as follows
$$(|H|)(\xi)=\left(H^{\ast}(\xi)H(\xi)\right)^{\frac{1}{2}},\quad [\xi]\in\G.$$
Note that $H^{\ast}$ and $|H|$ are also the mappings from $\widehat{G}$ into $\bigcup\limits_{\xi\in\widehat{G}}\mathcal{L}(\mathcal{H}_{\xi}).$

Part (ii) of the following fact can be found in \cite[p. 146]{edwards1972integration}, however, without proof. Hence, for the convenience of the reader, we include its proof.

\begin{proposition}\label{adjoint}

\begin{enumerate}[label=(\roman*)]
\item Let $H\in \ell^{p}(\widehat{G}).$ Then, $H^{\ast}\in \ell^{p}(\widehat{G})$ and $\lv H^{\ast}\prv=\lv H\prv=\lv \lvert H\rvert \prv.$
\item Let $H\in \ell_{sch}^{p}(\widehat{G}).$ Then, $H^{\ast}\in \ell_{sch}^{p}(\widehat{G})$ and $\lv H^{\ast}\psrv=\lv H\psrv=\lv \lvert H\rvert \psrv.$
\end{enumerate}
\end{proposition}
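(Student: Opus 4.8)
The plan is to reduce each claimed equality to the corresponding fact at the level of individual Schatten/operator norms, since both the $\ell^p(\G)$ and $\ell^p_{sch}(\G)$ norms are assembled entrywise from the $\mathcal{S}^2$- or $\mathcal{S}^p$-norms of $H(\xi)$ weighted by $\Dim(\xi)$. The key observation is that for a single operator $A$ on a finite dimensional Hilbert space, one has $\lv A^{\ast}\rv_{\mathcal{S}^p} = \lv A\rv_{\mathcal{S}^p}$ and $\lv\, \lvert A\rvert\,\rv_{\mathcal{S}^p} = \lv A\rv_{\mathcal{S}^p}$ for all $1\leq p\leq\infty$, with the analogous statements for $\mathcal{L}(\mathcal{H})$. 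These are standard: the singular values of $A$, $A^{\ast}$, and $\lvert A\rvert = (A^{\ast}A)^{1/2}$ all coincide, because $\lvert A\rvert$ is precisely the positive operator whose eigenvalues are the singular values of $A$, and taking adjoints permutes nothing in the singular value list. I would cite, say, \cite[Chapter 2]{simon2005trace} or \cite[Chapter 3]{gohberg1969translations} for these pointwise identities.

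For part (ii), first I would verify that $H^{\ast}$ and $\lvert H\rvert$ genuinely lie in $\ell^p_{sch}(\G)$, which is immediate once the norm equality is established, together with checking that these mappings belong to $S'(\widehat{G})$. Then I would write out
\[
\lv H^{\ast}\psrv^p = \sum_{[\xi]\in\G}\Dim(\xi)\lv H^{\ast}(\xi)\rv_{\mathcal{S}^p(\mathcal{H}_{\xi})}^p = \sum_{[\xi]\in\G}\Dim(\xi)\lv (H(\xi))^{\ast}\rv_{\mathcal{S}^p(\mathcal{H}_{\xi})}^p,
\]
and apply $\lv (H(\xi))^{\ast}\rv_{\mathcal{S}^p} = \lv H(\xi)\rv_{\mathcal{S}^p}$ termwise to recover $\lv H\psrv^p$. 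The same substitution with $\lv\,\lvert H(\xi)\rvert\,\rv_{\mathcal{S}^p} = \lv H(\xi)\rv_{\mathcal{S}^p}$ handles the absolute value. The $p=\infty$ case is identical with the supremum of $\lv\cdot\rv_{\mathcal{L}(\mathcal{H}_{\xi})}$ in place of the weighted sum.

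Part (i) is completely parallel, the only change being that the entrywise norm is the fixed Hilbert–Schmidt norm $\lv\cdot\rv_{\mathcal{S}^2(\mathcal{H}_{\xi})}$ and the weight is $(\Dim(\xi))^{p(2/p-1/2)}$ rather than $\Dim(\xi)$; neither depends on $H$, so the identities $\lv A^{\ast}\rv_{\mathcal{S}^2} = \lv A\rv_{\mathcal{S}^2} = \lv\,\lvert A\rvert\,\rv_{\mathcal{S}^2}$ pass through the sum verbatim. The $p=\infty$ case uses the $\mathcal{S}^2$-norm with the weight $(\Dim(\xi))^{-1/2}$ under a supremum, again unaffected by adjoints or absolute values.

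I do not expect a genuine obstacle here: the entire content is the pointwise invariance of singular values under $A\mapsto A^{\ast}$ and $A\mapsto\lvert A\rvert$, and the weights are independent of the operator entry, so the proof is a direct termwise substitution in the defining series. The only point requiring a word of care is confirming that $H^{\ast}$ and $\lvert H\rvert$ remain tempered distributions on $\widehat{G}$, i.e. lie in $S'(\widehat{G})$, which follows since the $\mathcal{S}^2$-norm used in the definition of $S'(\widehat{G})$ is itself invariant under these operations.
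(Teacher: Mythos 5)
Your proposal is correct and follows essentially the same route as the paper's own proof: both arguments reduce everything to the coincidence of the singular values of $H(\xi)$, $H^{\ast}(\xi)$ and $\lvert H(\xi)\rvert$ on each finite-dimensional $\mathcal{H}_{\xi}$, and then substitute termwise into the defining (weighted) sum or supremum. Your extra remark on checking membership in $S'(\widehat{G})$ is a harmless refinement that the paper leaves implicit.
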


\begin{proof}
(i). Note that since the sequence of singular values of the operators $H(\xi)$, $H^{\ast}(\xi)$ and $\lvert H(\xi)\rvert$ coincide for all $[\xi]\in\widehat{G}$, it follows that $\lv H(\xi)\rv_{\mathcal{S}^{2}(\mathcal{H}_{\xi})}=\lv H^{\ast}(\xi)\rv_{\mathcal{S}^{2}(\mathcal{H}_{\xi})}=\lv \lvert H(\xi)\lvert\rv_{\mathcal{S}^{2}(\mathcal{H}_{\xi})},$ $[\xi]\in\widehat{G},$ where $\lv\cdot\rv_{\mathcal{S}^{2}(\mathcal{H}_{\xi})}$ is the Hilbert-Schmidt norm (see, for example, \cite[Sections II.2 and III.2]{gohberg1969translations}). 
Therefore, for $1\leq p<\infty$, one has
$$\lv H^{\ast}\prv^{p}=\sum_{[\xi]\in\widehat{G}}\left(\Dim(\xi)\right)^{p\left(\frac{2}{p}-\frac{1}{2}\right)}\lv \left(H(\xi)\right)^{\ast}\rv_{\mathcal{S}^{2}(\mathcal{H}_{\xi})}=\sum_{[\xi]\in\widehat{G}}\left(\Dim(\xi)\right)^{p\left(\frac{2}{p}-\frac{1}{2}\right)}\lv H(\xi)\rv_{\mathcal{S}^{2}(\mathcal{H}_{\xi})}=\lv H\prv^{p}$$
$$=\sum_{[\xi]\in\widehat{G}}\left(\Dim(\xi)\right)^{p\left(\frac{2}{p}-\frac{1}{2}\right)}\lv \lvert H(\xi)\rvert\rv_{\mathcal{S}^{2}(\mathcal{H}_{\xi})}=\lv \lvert H
\rvert\prv^{p}.$$
Similarly, for $p=\infty$, one has 
$$\|H^{\ast}\|_{\ell^{\infty}(\widehat{G})}=\sup_{[\xi]\in\widehat{G}}(\Dim(\xi))^{-1/2}\lv \left(H(\xi)\right)^{\ast}\rv_{\mathcal{S}^{2}(\mathcal{H}_{\xi})}=\sup_{[\xi]\in\widehat{G}}(\Dim(\xi))^{-1/2}\lv H(\xi)\rv_{\mathcal{S}^{2}(\mathcal{H}_{\xi})}=\lv H\rv_{\ell^{\infty}(\widehat{G})}$$
$$=\sup_{[\xi]\in\widehat{G}}(\Dim(\xi))^{-1/2}\lv \lvert H(\xi)\rvert\rv_{\mathcal{S}^{2}(\mathcal{H}_{\xi})}=\lv \lvert H\rvert\rv_{\ell^{\infty}(\widehat{G})},$$
which together with the last equality complete the proof.

(ii). Similarly, since the singular values of the operators $H(\xi),$ $H^\ast(\xi)$ and $\lvert H(\xi)\rvert$ coincide for all $[\xi]\in\widehat{G}$, it follows that $\lv H(\xi)\rv_{\mathcal{S}^{p}(\mathcal{H}_{\xi})}=\lv H^{\ast}(\xi)\rv_{\mathcal{S}^{p}(\mathcal{H}_{\xi})}=\lv \lvert H(\xi)\rvert\rv_{\mathcal{S}^{p}(\mathcal{H}_{\xi})},$ $[\xi]\in\widehat{G},$ where $\lv\cdot\rv_{\mathcal{S}^{p}(\mathcal{H}_{\xi})}$ is the Schatten norm (see, for example, \cite[Sections II.2 and III.2]{gohberg1969translations}). 
Therefore, for $1\leq p<\infty$, one has
$$\lv H^{\ast}\psrv^{p}=\sum_{[\xi]\in\widehat{G}}\Dim(\xi)\lv \left(H(\xi)\right)^{\ast}\rv_{\mathcal{S}^{p}(\mathcal{H}_{\xi})}=\sum_{[\xi]\in\widehat{G}}\Dim(\xi)\lv H(\xi)\rv_{\mathcal{S}^{p}(\mathcal{H}_{\xi})}=\lv H\psrv^{p}$$
$$=\sum_{[\xi]\in\widehat{G}}\Dim(\xi)\lv \lvert H(\xi)\rvert\rv_{\mathcal{S}^{p}(\mathcal{H}_{\xi})}=\lv \lvert H\rvert\psrv^{p}.$$
Similarly, for $p=\infty$, one has 
$$\|H^{\ast}\|_{\ell_{sch}^{\infty}(\widehat{G})}=\sup_{[\xi]\in\widehat{G}}\lv \left(H(\xi)\right)^{\ast}\rv_{\mathcal{L}(\mathcal{H}_{\xi})}=\sup_{[\xi]\in\widehat{G}}\lv H(\xi)\rv_{\mathcal{L}(\mathcal{H}_{\xi})}=\lv H\rv_{\ell_{sch}^{\infty}(\widehat{G})}$$
$$=\sup_{[\xi]\in\widehat{G}}\lv \lvert H(\xi)\rvert\rv_{\mathcal{L}(\mathcal{H}_{\xi})}=\lv \lvert H\rvert\rv_{\ell_{sch}^{\infty}(\widehat{G})},$$
which together with the last equality complete the proof.
\end{proof}

For the noncommutative spaces $\ell^{p}(\widehat{G})$, $1<p<\infty$, the following Clarkson type inequalities are known from \cite[Theorem 3]{tulenov2018clarkson}. The proof can also be found in \cite{tulenov2018clarkson}.

\begin{proposition}\label{Clarkson type inequality}
Let $1<p,q<\infty$ with $\frac{1}{p}+\frac{1}{q}=1$. Then, for any $H_{1}, H_{2}\in \ell^{p}(\widehat{G})$, one has the following inequalities:
\begin{enumerate}[label=(\roman*)]
    \item 
    If $1<p\leq 2$, then 
    \begin{equation*}\begin{split}
    \Biggl(\lv \frac{H_{1}+H_{2}}{2} \prv^{q}+\lv \frac{H_{1}-H_{2}}{2} \prv^{q}\Biggr)^{1/q}\leq \left(\frac{1}{2}\left(\lv H_{1}\prv^{p}+\lv H_{2}\prv^{p}\right)\right)^{1/p};
    \end{split}\end{equation*}
    \item 
    If $2\leq p< \infty$, then \begin{equation*}\begin{split}
    \Biggl(\lv \frac{H_{1}+H_{2}}{2} \prv^{p}+\lv \frac{H_{1}-H_{2}}{2} \prv^{p}\Biggr)^{1/p}\leq \left(\frac{1}{2}\left(\lv H_{1}\prv^{q}+\lv H_{2}\prv^{q}\right)\right)^{1/q}.
    \end{split}\end{equation*}
\end{enumerate}
\end{proposition}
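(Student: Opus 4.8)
The plan is to exploit the fact that, fiberwise, the space $\ell^{p}(\G)$ is built from the \emph{Hilbert} norm $\lv\cdot\rv_{\mathcal{S}^{2}(\mathcal{H}_{\xi})}$. For a fixed $p$ the assignment $H\mapsto\lv H(\xi)\rv_{\mathcal{S}^{2}(\mathcal{H}_{\xi})}$ turns $\lv\cdot\prv$ into the norm of $L^{p}(\mu)$, where $\mu$ is the weighted counting measure on $\G$ giving mass $\Dim(\xi)^{2-p/2}$ to $[\xi]$, and since each $\mathcal{S}^{2}(\mathcal{H}_{\xi})$ is a Hilbert space, the parallelogram identity \eqref{parallelogram identity} holds in every fiber. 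I would therefore prove the inequalities in two stages: a pointwise (fiberwise) two-vector estimate, followed by a summation against $\mu$. This structure is exactly what makes the Hilbert--Schmidt fibers tractable, in contrast to the general Schatten fibers of $\ell^{p}_{sch}(\G)$.

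For the pointwise stage, fix $[\xi]$ and write $u=H_{1}(\xi)$, $v=H_{2}(\xi)$ in the Hilbert space $\mathcal{S}^{2}(\mathcal{H}_{\xi})$, with $x=\lv u\rv_{\mathcal S^2}$, $y=\lv v\rv_{\mathcal S^2}$. By the parallelogram identity the quantities $A=\lv\tfrac{u+v}{2}\rv_{\mathcal S^2}$ and $B=\lv\tfrac{u-v}{2}\rv_{\mathcal S^2}$ satisfy $A^{2}+B^{2}=\tfrac12(x^{2}+y^{2})$, while $4A^{2}=x^{2}+y^{2}+2\,\mathrm{Re}\langle u,v\rangle$ and $4B^{2}=x^{2}+y^{2}-2\,\mathrm{Re}\langle u,v\rangle$ depend only on $t=\mathrm{Re}\langle u,v\rangle\in[-xy,xy]$. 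For $p\le 2$ the function $t\mapsto A^{q}+B^{q}$ is convex (since $q\ge2$), so its maximum over the admissible interval is attained at $t=\pm xy$, i.e. at the collinear configuration $A=\tfrac{x+y}{2},\ B=\tfrac{|x-y|}{2}$. Thus the fiberwise bound reduces to the classical scalar two-point Clarkson inequality $\bigl(\tfrac{x+y}{2}\bigr)^{q}+\bigl(\tfrac{|x-y|}{2}\bigr)^{q}\le\bigl(\tfrac{x^{p}+y^{p}}{2}\bigr)^{q-1}$, giving
$$A^{q}+B^{q}\le\Bigl(\tfrac{x^{p}+y^{p}}{2}\Bigr)^{q-1}.$$
The case $p\ge 2$ is identical with $p$ and $q$ interchanged, yielding $A^{p}+B^{p}\le\bigl(\tfrac{x^{q}+y^{q}}{2}\bigr)^{p-1}$.

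For the summation stage in case (i) I would raise the fiber norms to the relevant power and invoke Minkowski's inequality in $L^{p/q}(\mu)$. Writing $\phi([\xi])=\lv\tfrac{H_1+H_2}{2}(\xi)\rv_{\mathcal S^2}$ and $\psi([\xi])=\lv\tfrac{H_1-H_2}{2}(\xi)\rv_{\mathcal S^2}$, one has $\lv\tfrac{H_1+H_2}{2}\prv^{q}=\lv\phi^{q}\rv_{L^{p/q}(\mu)}$, and since $p/q\le1$ the \emph{reverse} Minkowski inequality gives $\lv\phi^{q}\rv_{L^{p/q}}+\lv\psi^{q}\rv_{L^{p/q}}\le\lv\phi^{q}+\psi^{q}\rv_{L^{p/q}}$. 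Bounding $\phi^{q}+\psi^{q}$ pointwise by the fiber estimate and using $\tfrac{(q-1)p}{q}=1$ (a consequence of $\tfrac1p+\tfrac1q=1$), the right-hand side collapses to exactly $\bigl(\tfrac12(\lv H_1\prv^{p}+\lv H_2\prv^{p})\bigr)^{q-1}$, which is (i). Case (ii), with $p\ge2$, is handled the same way using the ordinary (forward) Minkowski inequality in $L^{p-1}(\mu)$ together with the identity $q(p-1)=p$.

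The main obstacle is the pointwise stage, and within it the scalar two-point inequality $\bigl(\tfrac{x+y}{2}\bigr)^{q}+\bigl(\tfrac{|x-y|}{2}\bigr)^{q}\le\bigl(\tfrac{x^{p}+y^{p}}{2}\bigr)^{q-1}$, which is the genuinely nontrivial elementary inequality underlying all Clarkson estimates; the reduction from two Hilbert-space vectors to this scalar inequality by convexity in the inner product is the structural step that makes the argument work. The remaining care is bookkeeping: checking the direction of Minkowski's inequality (reverse for $p\le2$, forward for $p\ge2$) and verifying the exponent identities that align the summed bound with the stated right-hand sides. Finiteness of all the series, and hence $H_1\pm H_2\in\ell^{p}(\G)$, follows a posteriori from these inequalities.
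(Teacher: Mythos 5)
Your proof is correct, but note that the paper itself gives no proof of this proposition: it is imported from \cite{tulenov2018clarkson}, and the only Clarkson-type inequality actually proved in the paper is the Schatten-based analogue, Proposition \ref{new Clarkson type inequality}, which is obtained by an entirely different method --- Calder\'on's complex interpolation of direct sums (Proposition \ref{calderon interpolation 1} together with Proposition \ref{Schatten interpolation}) combined with the duality of Proposition \ref{duality of the direct sum}. Your route is the classical ``$L^{p}$ with Hilbert-space values'' argument: since $\lv H\prv$ is the $L^{p}(\mu)$-norm of $[\xi]\mapsto \lv H(\xi)\rv_{\mathcal{S}^{2}(\mathcal{H}_{\xi})}$ for the weight $\mu(\{[\xi]\})=\Dim(\xi)^{2-p/2}$, and each fiber is a Hilbert space, the parallelogram identity plus convexity of $t\mapsto A^{q}+B^{q}$ in $t=\mathrm{Re}\langle u,v\rangle$ (valid because $q\ge 2$, resp.\ $p\ge 2$ in case (ii)) reduces the fiberwise two-vector estimate to the scalar two-point Clarkson inequality, after which the reverse (resp.\ forward) Minkowski inequality in $L^{p/q}(\mu)$ performs the summation; I have checked the exponent identities $q-1=q/p$ and $p/q=p-1$ and the directions of both Minkowski steps, and they are all in order. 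The trade-off is clear: your argument is elementary and essentially self-contained but depends on the Hilbert structure of the fibers, so it does not extend to $\ell^{p}_{sch}(\G)$, whereas the interpolation--duality machinery the paper uses for the Schatten analogue is heavier but covers general fibers and would reprove this statement as well. The single ingredient you do not justify is the scalar inequality $\bigl(\frac{x+y}{2}\bigr)^{q}+\bigl(\frac{|x-y|}{2}\bigr)^{q}\le\bigl(\frac{x^{p}+y^{p}}{2}\bigr)^{q-1}$; this is exactly the two-point case of \cite[Theorem 2]{clarkson1936uniformly}, so a citation suffices, but without one (or a proof) your write-up is not complete at that point.
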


\subsection{Uniformly convex and uniformly smooth Banach spaces, type and co-type properties}\label{section type}

In general, for a given Banach space, one can define the notions of its modulus of convexity and modulus of smoothness.

\begin{definition}\label{modulus}
Let $(X,\|\cdot\|_{X})$ be a Banach space. Its modulus of convexity and modulus of smoothness are defined by 
$$\delta_{X}(\varepsilon):=\inf\left\{1-\lv\frac{x+y}{2}\rv_{X} \mid x,y\in X,\quad \lv x\rv_{X}=\lv y\rv_{X}=1,\quad \lv x-y\rv_{X}=\varepsilon\right\},$$ for $0<\varepsilon\leq2$, and 
$$\rho_{X}(t):=\sup\left\{\frac{\lv x+ty\rv_{X}+\lv x-ty\rv_{X}}{2}-1 \mid x,y\in X,\quad \lv x\rv_{X}=\lv y\rv_{X}=1\right\},$$
for $t>0,$ respectively.
\end{definition}

These notions are helpful to classify uniformly convex and uniformly smooth Banach spaces, respectively. 

\begin{definition}\label{uniform convexity and smoothness}
Let $X$ be a Banach space, and $\delta_{X}(\varepsilon),$ $0<\varepsilon<2,$ and $\rho_{X}(t),$ $t>0,$ be its modulus of convexity and modulus of smoothness, respectively. Then, the Banach space $X$ is said to be uniformly convex if $\delta_{X}(\varepsilon)>0$ for every $\varepsilon>0,$ and uniformly smooth if $\lim\limits_{t\rightarrow 0} \frac{\rho_{X}(t)}{t}=0.$
\end{definition}

Note that the notions of uniform convexity and uniform smoothness are the dual notions to each other. Namely, in \cite[Propositition 1.e.2]{lindenstrauss2013classical} (see also \cite{day1944uniform, lindenstrauss1963modulus}), it was proved that a Banach space $X$ is uniformly convex if and only if $X^{\ast}$ is uniformly smooth, where $X^\ast$ is the dual space. Moreover, in the same proposition, authors proved the identity connecting the modulus of convexity of $X$ and the modulus of smoothness of its dual $X^\ast$.

Note that the simplest example of a Banach space that is both uniformly convex and uniformly smooth is a Hilbert space. Moreover, in this case, the modulus of convexity and modulus of smoothness of a given Banach space $H$ can be computed easily (due to the parallelogram identity) as follows
$$\delta_{H}(\varepsilon)=1-\left(1-\frac{\varepsilon^2}{4}\right)^{\frac{1}{2}}=\frac{\varepsilon^2}{8}+O(\varepsilon^4),\quad 0<\varepsilon<2,$$
$$\rho_{H}(t)=\left(1+t^2\right)^{\frac{1}{2}}=\frac{t^2}{2}+O(t^4),\quad t>0.$$

It is known due to Nordlander \cite{Nordlander1960} (see also \cite{lindenstrauss2013classical}) that a Hilbert space is the \enquote{most} uniformly convex and the \enquote{most} uniformly smooth space among all Banach spaces, i.e., one has
\begin{equation*}\begin{split}
\delta_{X}(\varepsilon)&\leq \delta_{H}(\varepsilon),\quad 0<\varepsilon<2,\\
\quad\rho_{X}(t)&\geq \rho_{H}(t),\quad t>0.
\end{split}\end{equation*}
Hence, for any Banach space $X$, there exist constants $C, S>0$ such that
\begin{equation}\begin{split}\label{optimal rate in general}
\delta_{X}(\varepsilon)&\leq C\varepsilon^2,\quad 0<\varepsilon<2,\\
\rho_{X}(t)&\geq St^2,\quad t>0.
\end{split}\end{equation}

We now present one of the interesting applications of these notions connecting them to the reflexivity of the Banach space. Recall that a Banach space $X$ is said to be reflexive if $X^{\ast\ast}=X$, where $X^{\ast\ast}$ denotes the second dual of $X.$ The proof of the following result can be found in \cite[Proposition 1.e.3]{lindenstrauss2013classical} (see also \cite{milman1938some, pettis1939proof}). 
\begin{proposition}\label{MilmanPettis}
Every uniformly convex Banach space is reflexive. Thus, every uniformly smooth Banach space is reflexive. 
\end{proposition}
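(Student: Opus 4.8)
The statement is the classical Milman--Pettis theorem, so the plan is to prove the first assertion directly and then obtain the second by duality. The tools required are modest: the canonical isometric embedding $J\colon X\to X^{**}$, Goldstine's theorem (that $J(B_X)$ is weak\textsuperscript{*}-dense in the closed unit ball $B_{X^{**}}$), the weak\textsuperscript{*}-compactness of $B_{X^{**}}$ (Banach--Alaoglu), and uniform convexity. I would use the latter in its \emph{ball form}: there is a function $\varepsilon\mapsto\delta(\varepsilon)>0$ such that $\|x\|,\|y\|\le1$ together with $\|\tfrac{x+y}{2}\|>1-\delta(\varepsilon)$ force $\|x-y\|<\varepsilon$. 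That such a ball-modulus exists, and is positive exactly when the sphere-modulus $\delta_X$ of Definition \ref{modulus} is, is a standard normalization, since the hypothesis $\|\tfrac{x+y}{2}\|>1-\delta$ already pins $\|x\|,\|y\|$ within $2\delta$ of $1$, so rescaling to the sphere costs an arbitrarily small amount; for the argument below only the qualitative consequence is needed.

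First I would reduce reflexivity to a density statement. Since $J$ is isometric and $B_X$ is complete, $J(B_X)$ is norm-closed in $X^{**}$; hence it suffices to show that every $\xi\in X^{**}$ with $\|\xi\|=1$ lies in the norm-closure of $J(B_X)$. Indeed this yields $S_{X^{**}}\subseteq J(B_X)$, and a scaling argument (writing $\eta=\|\eta\|\cdot\tfrac{\eta}{\|\eta\|}$ for $0<\|\eta\|\le 1$) upgrades it to $B_{X^{**}}=J(B_X)$, i.e. surjectivity of $J$, which is reflexivity.

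So fix $\xi$ with $\|\xi\|=1$ and $\varepsilon>0$, and set $\delta=\delta(\varepsilon)>0$. Choose $f\in X^{*}$ with $\|f\|=1$ and $\langle\xi,f\rangle>1-\tfrac{\delta}{2}$, and consider the weak\textsuperscript{*}-open neighbourhood $V=\{\eta\in X^{**}:|\langle\eta-\xi,f\rangle|<\tfrac{\delta}{2}\}$ of $\xi$. The crux is that $\mathrm{diam}_{\|\cdot\|}\big(J(B_X)\cap V\big)\le\varepsilon$: if $x,y\in B_X$ with $Jx,Jy\in V$, then $f(x),f(y)>1-\delta$, so $\|x+y\|\ge f(x+y)>2-2\delta$, giving $\|\tfrac{x+y}{2}\|>1-\delta$, and uniform convexity then forces $\|x-y\|<\varepsilon$. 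Granting this, Goldstine's theorem supplies some $Jx_0\in J(B_X)\cap V$, and I claim $\|\xi-Jx_0\|\le\varepsilon$. Otherwise $\xi$ would lie in $V\cap\big(Jx_0+\varepsilon B_{X^{**}}\big)^{c}$, which is weak\textsuperscript{*}-open because $Jx_0+\varepsilon B_{X^{**}}$ is weak\textsuperscript{*}-closed by Banach--Alaoglu; Goldstine would then yield $Jy_0$ in this set with $\|x_0-y_0\|=\|Jx_0-Jy_0\|>\varepsilon$, contradicting the diameter bound. Hence $\mathrm{dist}(\xi,J(B_X))\le\varepsilon$ for every $\varepsilon>0$, which completes the first assertion. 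This paragraph is where the main obstacle lies: marrying the weak\textsuperscript{*} topology, in which Goldstine grants approximation, with the norm topology, in which uniform convexity delivers rigidity. The decisive point is that a single weak\textsuperscript{*} slice cut out by a nearly-norming $f$ already has small norm-diameter on $J(B_X)$; everything else is bookkeeping.

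For the second assertion, I would invoke the duality between the two moduli recalled before the statement: a uniformly smooth $X$ has uniformly convex dual $X^{*}$. This is the companion of the stated equivalence ``$X$ uniformly convex $\iff X^{*}$ uniformly smooth'' \cite{lindenstrauss2013classical}; concretely it follows from the inequality $\delta_{X^{*}}(\varepsilon)\ge\tfrac{\tau\varepsilon}{2}-\rho_{X}(\tau)$, valid for all $\tau>0$ (obtained by pairing unit functionals $f,g$ against $\|\tfrac{x+\tau y}{}\|$-type test vectors), together with $\rho_{X}(\tau)=o(\tau)$, which makes the right-hand side positive for small $\tau$. By the first part $X^{*}$ is then reflexive, and since a Banach space is reflexive if and only if its dual is, $X$ is reflexive as well.
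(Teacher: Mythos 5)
Your proof is correct, but note that the paper does not actually prove this proposition at all: it is stated as a known result and delegated to \cite[Proposition 1.e.3]{lindenstrauss2013classical} and the original papers of Milman and Pettis. What you supply is a complete, self-contained proof along the standard Goldstine--slice route: reduce reflexivity to norm-density of $J(B_X)$ in the unit sphere of $X^{**}$, observe that a weak\textsuperscript{*} slice $\{\eta : |\langle \eta-\xi,f\rangle|<\delta/2\}$ cut by a nearly norming functional has norm-diameter at most $\varepsilon$ on $J(B_X)$ by uniform convexity, and then play Goldstine against the weak\textsuperscript{*}-closedness of norm balls (Banach--Alaoglu) to force $\xi$ into the $\varepsilon$-neighbourhood of $J(B_X)$. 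All the steps check out, including the scaling reduction from the sphere to the ball of $X^{**}$ and the final contradiction; you are also right to flag, rather than sweep under the rug, the passage from the sphere-modulus of Definition \ref{modulus} (with $\|x-y\|=\varepsilon$) to the ball form of uniform convexity that the slice argument actually uses --- that normalization is standard but does require the small rescaling/perturbation argument you sketch. The deduction of the second assertion from the first via the Lindenstrauss duality inequality $\delta_{X^{*}}(\varepsilon)\geq \tfrac{\tau\varepsilon}{2}-\rho_{X}(\tau)$ is exactly the companion of the duality the paper recalls before the statement, and combined with the fact that a Banach space is reflexive iff its dual is, it closes the argument. Two cosmetic points: the displayed expression describing the test vectors in your last paragraph has an empty denominator (presumably you meant $\|x+\tau y\|$), and in the complex case the estimate $\|x+y\|\geq f(x)+f(y)$ should be run with $\mathrm{Re}\,f$; neither affects the substance.
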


We now recall the definition of the type and cotype of the Banach space $X.$ Let $(X,\lv \cdot\rv)$ be a Banach space and $x_{j}\in X,$ $1\leq j\leq n$. Consider the following expression 
\begin{equation*}\begin{split}
\underset{\theta_{j}=\pm1}{\rm{Average}}\lv\sum_{j=1}^{n} \theta_{j}x_{j}\rv=\frac{1}{2^n}\sum_{\theta_{j}=\pm1}\lv \sum_{j=1}^{n}\theta_{j}x_{j}\rv=\int_{0}^{1}\lv \sum_{j=1}^{n}r_{j}(t)x_{j} \rv dt,
\end{split}\end{equation*}
where $\{r_{j}\}_{j=1}^{\infty}$ denotes the sequence of Rademacher functions (see \cite[Section 1.e]{lindenstrauss2013classical} or \cite[Section 6.2]{albiac2006topics}  for more details), i.e. $$r_{j}(t)=\mathrm{sgn}\left(\sin 2^{j}\pi t\right),\quad j\in\mathbb{N},\quad t\in[0,1].$$ 
Note that first two expressions are to be understood as the mean value over all possible sums with either $\theta_{j}=1$ or $\theta_{j}=-1.$ For example, it equals to 
$$\frac{\lv x_{1}+x_{2}\rv+\lv x_{1}-x_{2}\rv}{2}$$
when $n=2.$

\begin{definition}\label{type def}\cite[Definition 1.e.12]{lindenstrauss2013classical}
A Banach space $X$ is said to be type $p$ for some $1<p\leq 2$ if there exists  constant $M$ such that for every finite number of vectors $\{x_{j}\}_{j=1}^{n}$ in $X$, we have 
\begin{equation*}
\int_{0}^{1}\lv \sum_{j=1}^{n}r_{j}(t)x_{j} \rv dt\leq M\left(\sum_{j=1}^{n}\lv x_{j}\rv^{p}\right)^{1/p},
\end{equation*}
and $X$ is said to be cotype $q$ for some $q\geq 2$ if there exists constant $M$ such that for every finite number of vectors $\{x_{j}\}_{j=1}^{n}$ in $X$, we have
\begin{equation*}
\int_{0}^{1}\lv \sum_{j=1}^{n}r_{j}(t)x_{j} \rv dt\geq M\left(\sum_{j=1}^{n}\lv x_{j}\rv^{q}\right)^{1/q}.
\end{equation*}
\end{definition}

Note that every Banach space is of type $1$ and cotype $\infty$ (see, for example, \cite[Section 1.e, p. 73]{lindenstrauss2013classical}). Hence, these trivial cases are not included in the definition of type and cotype of the Banach space. Also note that assumptions on $p$ and $q$ in the above definition are not restrictive, since by Khintchine's inequality (see, for example, \cite[Theorem 2.b.3]{lindenstrauss2013classicalsequence} or \cite[Theorem 6.2.3]{albiac2006topics}) it follows that any Banach space cannot be of type $p>2$ and cotype $1\leq q< 2$. Every Hilbert space is of type $2$ and cotype $2.$ Conversely, if a Banach space is of type $2$ and cotype $2$, then it is isomorphic to a Hilbert space (see, \cite[Proposition 3.1]{kwapien1972isomorphic} or \cite{lindenstrauss2013classical} for more details).

\begin{remark}\label{different averages of type}
The $L^{1}$-average $\int_{0}^{1}\lv \sum\limits_{j=1}^{n}r_{j}(t)x_{j}\rv dt$ in Definition \ref{type def} can equivalently be replaced by any other $L^{r}$-average with $1<r<\infty$ due to the Kahane inequality (see \cite[Theorem 1.e.13]{lindenstrauss2013classical}) which is stated as follows: For every $1<r<\infty$, there exists constant $0<K_{r}<\infty$ such that for every finite sequence $\{x_{j}\}_{j=1}^{n}\subset X$, we have 
\begin{equation*}
    \int_{0}^{1}\lv\sum_{j=1}^{n}r_{j}(t)x_{j}\rv dt\leq \left(\int_{0}^{1}\lv \sum_{j=1}^{n}r_{j}(t)x_{j}\rv^{r}dt\right)^\frac{1}{r}\leq K_{r}\int_{0}^{1}\lv \sum_{j=1}^{n}r_{j}(t)x_{j}\rv dt.
\end{equation*}
Especially, we will be interested in the $L^2$-average $\left(\int_{0}^{1}\lv \sum\limits_{j=1}^{n}r_{j}(t)x_{j}\rv^2dt\right)^{1/2} $ in the next section.
\end{remark}

\subsection{Complex interpolation}

Let $(X_{0}, \lv \cdot\rv_{X_{0}})$ and $(X_{1}, \lv \cdot\rv_{X_{1}})$ be (complex) Banach spaces. One can define their intersection $X_{0}\cap X_{1}$ and sum $X_{0}+X_{1}$ equipped with the norms 
$$\lv x\rv_{X_{0}\cap X_{1}}:=\max\{\lv x\rv_{X_{0}}, \lv x\rv_{X_{1}}\},\quad x\in X_{0}\cap X_{1}$$
and 
$$\lv x\rv_{X_{0}+X_{1}}:=\inf\{\lv x_{0}\rv_{X_{0}}+\lv x_{1}\rv_{X_{1}}:\quad x=x_{0}+x_{1},\quad x_{0}\in X_{0}, \quad x_{1}\in X_{1}\},$$
respectively. Obviously, $(X_{0}\cap X_{1}, \lv \cdot\rv_{X_{0}\cap X_{1}})$ and $(X_{0}+X_{1}, \lv \cdot\rv_{X_{0}+X_{1}})$ are Banach spaces too. 

Consider the space $\mathcal{F}:=\mathcal{F}(X_{0}, X_{1})$ of all functions $f$ with values in $X_{0}+X_{1}$, which are bounded and continuous on the strip $\{z\in\mathbb{C}: 0\leq \mathrm{Re}z\leq 1\}$ and analytic on the open strip $\{z\in\mathbb{C}: 0< \mathrm{Re}z< 1\}$, and the functions $t\mapsto f(j+it),$ $j=0,1$ are continuous from the real line into $X_{j}$, which tends to zero as $|t|\rightarrow\infty.$ This space becomes a Banach space equipped with a norm 
$$\lv f\rv_{\mathcal{F}}=\max\{\sup_{t\in\mathbb{R}}\lv f(it)\rv_{X_{0}}, \sup_{t\in\mathbb{R}}\lv f(1+it)\rv_{X_{1}}\}.$$

For $0<\theta<1$, a complex interpolation space $(X_{0}, X_{1})_{\theta}$ is defined as the class of all $x\in X_{0}+X_{1}$ such that $x=f(\theta)$ for some $f\in \mathcal{F}(X_{0}, X_{1})$ equipped with a norm 
$$\lv x\rv_{(X_{0}, X_{1})_{\theta}}=\inf\{\lv f\rv_{\mathcal{F}}: \quad f(\theta)=x,\quad f\in \mathcal{F}(X_{0}, X_{1})\}.$$ Note that $\left((X_{0}, X_{1})_{\theta}, \lv \cdot\rv_{(X_{0}, X_{1})_{\theta}}\right)$ is a Banach space such that $$X_{0}\cap X_{1}\subset (X_{0}, X_{1})_{\theta}\subset X_{0}+X_{1}$$
with continuous inclusions.

There is another well-known complex interpolation space $(X_{0}, X_{1})^{\theta}$ (see, for example, \cite[Section 4.1]{lofstrom1976interpolation}). However, since the spaces that are considered in this paper are all reflexive (cf. Proposition \ref{reflexivity}), these two complex interpolations coincide (see \cite[Theorem 4.3.1]{lofstrom1976interpolation}). Hence, throughout this paper, we denote by $(X_{0},X_{1})_{\theta}$ the complex interpolation space of the (complex) Banach spaces $X_{0}$ and $X_{1}$, which is defined as above (for more detailes, we refer the reader to \cite{lofstrom1976interpolation, calderon1964intermediate}).

Let $(X, \lv\cdot\rv_{X})$ be a Banach space and $1\leq p\leq \infty.$ A space $X\oplus_{p}X$ is the set of all pairs $(x,y)\in X\times X$ equipped with the norm
$$\lv (x,y)\rv_{X\oplus_{p}X}:=\left(\lv x\rv_{X}^{p}+\lv y\rv_{X}^{p}\right)^{1/p}$$
when $p<\infty$, and 
$$\lv (x,y)\rv_{X\oplus_{\infty}X}:=\max\{\lv x\rv_{X}, \lv y\rv_{X}\}$$
when $p=\infty.$

The following interpolation result due to Calder\'on is important to prove Clarkson type inequalities in later section. 

\begin{proposition}\label{calderon interpolation 1}\cite[Section 13.6]{calderon1964intermediate} (see also \cite{pisier1998non})
Let $X_{1}, X_{2}$ be Banach spaces and $1\leq p_{1},p_{2}\leq \infty$. If at least one of the spaces $X_{1}\oplus_{p_{1}}X_{1},$ $X_{2}\oplus_{p_{2}}X_{2}$ is reflexive, then
$$\left(X_{1}\oplus_{p_{1}}X_{1}, X_{2}\oplus_{p_{2}}X_{2}\right)_{\theta}=\left(X_{1},X_{2}\right)_{\theta}\oplus_{p}\left(X_{1},X_{2}\right)_{\theta},$$
where $\frac{1}{p}=\frac{1-\theta}{p_{1}}+\frac{\theta}{p_{2}},$ $0<\theta<1.$
\end{proposition}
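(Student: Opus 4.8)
The plan is to prove the two continuous inclusions separately, with matching norm bounds, after first using the reflexivity hypothesis to remove the technical overhead. Write $E_{1}:=X_{1}\oplus_{p_{1}}X_{1}$ and $E_{2}:=X_{2}\oplus_{p_{2}}X_{2}$, and note that a finite $\oplus_{p}$-sum is reflexive precisely when each summand is; hence the assumption that one of $E_{1},E_{2}$ is reflexive amounts to one of $X_{1},X_{2}$ being reflexive, and this propagates to all the couples that appear below. In particular, as recalled just before the statement (via \cite[Theorem 4.3.1]{lofstrom1976interpolation}), for each of the couples $(X_{1},X_{2})$ and $(E_{1},E_{2})$ the two complex interpolation methods coincide, so I am free to construct interpolating functions without imposing the decay-at-infinity condition of the first method. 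I would first dispose of the degenerate case $p=\infty$, which forces $p_{1}=p_{2}=\infty$: there the bound in both directions is immediate from $F(z)=(f(z),g(z))$, since every relevant norm is a supremum. So assume $p<\infty$.

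The engine of the proof is the inclusion
\[
\|(x,y)\|_{(E_{1},E_{2})_{\theta}}\le \|(x,y)\|_{(X_{1},X_{2})_{\theta}\oplus_{p}(X_{1},X_{2})_{\theta}} .
\]
Given $x,y\in (X_{1},X_{2})_{\theta}$, not both zero, I would normalise so that $b_{1}^{p}+b_{2}^{p}=1$, where $b_{j}$ denotes the $(X_{1},X_{2})_{\theta}$-norm of $x$, $y$ respectively, choose near-optimal analytic functions $\tilde f,\tilde g$ with $\tilde f(\theta)=x/b_{1}$, $\tilde g(\theta)=y/b_{2}$ and boundary norms at most $1+\varepsilon$, and set
\[
F(z)=\bigl(b_{1}^{\,e(z)}\,\tilde f(z),\; b_{2}^{\,e(z)}\,\tilde g(z)\bigr),\qquad e(z):=p\Bigl(\tfrac{1-z}{p_{1}}+\tfrac{z}{p_{2}}\Bigr).
\]
The exponent relation $\frac1p=\frac{1-\theta}{p_{1}}+\frac{\theta}{p_{2}}$ is exactly the statement $e(\theta)=1$, so $F(\theta)=(x,y)$; and since $\mathrm{Re}\,e=p/p_{1}$ on the line $\mathrm{Re}\,z=0$ and $\mathrm{Re}\,e=p/p_{2}$ on $\mathrm{Re}\,z=1$, a direct computation gives $\|F(it)\|_{E_{1}}\le(1+\varepsilon)(b_{1}^{p}+b_{2}^{p})^{1/p_{1}}=(1+\varepsilon)$ and likewise $\|F(1+it)\|_{E_{2}}\le(1+\varepsilon)$. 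Letting $\varepsilon\to0$ yields the claimed bound; the modulus $|b_{j}^{e(z)}|$ is bounded on the strip, so if $\tilde f,\tilde g$ are taken in the first-method class then $F$ inherits the required decay. The case in which one of $b_{1},b_{2}$ vanishes is handled by dropping the corresponding coordinate.

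For the reverse inclusion I would argue by duality rather than build a second family of functions. Under the running reflexivity assumption the duality theorem for complex interpolation applies, giving $(E_{1},E_{2})_{\theta}^{\ast}=(E_{1}^{\ast},E_{2}^{\ast})_{\theta}$ and $((X_{1},X_{2})_{\theta})^{\ast}=(X_{1}^{\ast},X_{2}^{\ast})_{\theta}$, together with $E_{j}^{\ast}=X_{j}^{\ast}\oplus_{p_{j}'}X_{j}^{\ast}$ and $\frac1{p'}=\frac{1-\theta}{p_{1}'}+\frac{\theta}{p_{2}'}$. Applying the engine inclusion to the dual couple yields $\|(\phi,\psi)\|_{(E_{1}^{\ast},E_{2}^{\ast})_{\theta}}\le\|(\phi,\psi)\|_{(X_{1}^{\ast},X_{2}^{\ast})_{\theta}\oplus_{p'}(X_{1}^{\ast},X_{2}^{\ast})_{\theta}}$, so the unit ball of the $\oplus_{p'}$-sum of the dual spaces sits inside the unit ball of $(E_{1},E_{2})_{\theta}^{\ast}$. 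Writing $\|(x,y)\|_{(X_{1},X_{2})_{\theta}\oplus_{p}(X_{1},X_{2})_{\theta}}$ as a supremum of dual pairings over that smaller ball and enlarging the ball then gives $\|(x,y)\|_{\oplus_{p}}\le\|(x,y)\|_{(E_{1},E_{2})_{\theta}}$, using reflexivity once more to identify the bidual.

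The step I expect to be the main obstacle is the reverse inclusion, and specifically the justification of the interpolation duality identity $(E_{1},E_{2})_{\theta}^{\ast}=(E_{1}^{\ast},E_{2}^{\ast})_{\theta}$: this requires density of $E_{1}\cap E_{2}$ in the interpolation space and the coincidence of the two interpolation functors, both of which are exactly what the reflexivity hypothesis supplies. A secondary technical point is the bookkeeping of the boundary cases where some $p_{j}=\infty$, so that the weight exponent $e(z)$ degenerates; I would treat these as limiting cases of the construction above, the fully degenerate case $p=\infty$ having already been settled directly.
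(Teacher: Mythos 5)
The paper offers no proof of this statement (it is quoted from Calder\'on), so your argument stands on its own. Your first inclusion, $\|(x,y)\|_{(E_{1},E_{2})_{\theta}}\le\|(x,y)\|_{(X_{1},X_{2})_{\theta}\oplus_{p}(X_{1},X_{2})_{\theta}}$, is correct and is the standard construction: since $\mathrm{Re}\,e(it)=p/p_{1}$ and $\mathrm{Re}\,e(1+it)=p/p_{2}$, the weights $|b_{j}^{e(z)}|$ make the boundary norms collapse to $(1+\varepsilon)(b_{1}^{p}+b_{2}^{p})^{1/p_{j}}=1+\varepsilon$, and your handling of $p=\infty$, of vanishing $b_{j}$, and of the decay at infinity is fine.

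The reverse inclusion, however, has a genuine gap, and it sits exactly where you flagged the danger. The duality theorem $\bigl((A_{0},A_{1})_{\theta}\bigr)^{\ast}=(A_{0}^{\ast},A_{1}^{\ast})^{\theta}$ requires $A_{0}\cap A_{1}$ to be dense \emph{in $A_{0}$ and in $A_{1}$}; this is what makes the restriction maps $A_{j}^{\ast}\rightarrow(A_{0}\cap A_{1})^{\ast}$ injective, so that $(A_{0}^{\ast},A_{1}^{\ast})$ is a compatible couple at all. You claim reflexivity supplies the needed density, but it does not: reflexivity gives the coincidence of the two complex methods and the bidual identification, while the density you cite --- that of $E_{1}\cap E_{2}$ in the \emph{interpolation space} --- is automatic and is not the hypothesis the duality theorem uses. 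A couple of reflexive (even Hilbert) spaces can have non-dense intersection: take $X_{1}=\ell^{2}$ and $X_{2}$ a proper closed subspace of it; then $X_{1}\cap X_{2}=X_{2}$ is not dense in $X_{1}$ and $X_{1}^{\ast}\rightarrow(X_{1}\cap X_{2})^{\ast}$ has a kernel, so the dual couple you interpolate is not well defined. The gap is repairable in two ways: (a) first replace each $X_{j}$ by $X_{j}^{\circ}:=\overline{X_{1}\cap X_{2}}^{\,X_{j}}$ --- neither side of the asserted identity changes, closed subspaces of reflexive spaces remain reflexive, and the density hypothesis then holds --- and only afterwards dualize, keeping track of the fact that $(X_{j}^{\circ})^{\ast}$ is a quotient of $X_{j}^{\ast}$; or, cleaner, (b) drop duality for this direction and use the boundary-value estimate $\|f(\theta)\|_{(A_{0},A_{1})_{\theta}}\le\exp\bigl(\int P_{0}(\theta,t)\log\|f(it)\|_{A_{0}}\,dt+\int P_{1}(\theta,t)\log\|f(1+it)\|_{A_{1}}\,dt\bigr)$ applied to each coordinate of $F=(f,g)$, followed by H\"older in $t$; this is the standard proof of $(L^{p_{0}}(A_{0}),L^{p_{1}}(A_{1}))_{\theta}=L^{p}((A_{0},A_{1})_{\theta})$, of which the present statement is the two-point case, and it needs no reflexivity for that inclusion.
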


Let $w>0$. A space $X\oplus_{p,w}X$ is the set of all pairs $(x,y)\in X\times X$ equipped with the norm 
$$\lv (x,y)\rv_{X\oplus_{p,w}X}:=\left(\lv x\rv_{X}^{p}+w\lv y\rv_{X}^{p}\right)^{1/p}$$
when $p<\infty$, and 
$$\lv (x,y)\rv_{X\oplus_{\infty,w}X}:=\max\{\lv x\rv_{X}, w \lv y\rv_{X}\}$$
when $p=\infty.$

We now recall the modified version of Proposition \ref{calderon interpolation 1}.

\begin{proposition}\label{calderon interpolation 2}\cite[Section 13.6]{calderon1964intermediate} (see also \cite[Theorem 5.5.3]{lofstrom1976interpolation} and \cite{pisier1998non}) Let $X_{1}, X_{2}$ be Banach spaces and $1\leq p_{1}, p_{2}\leq \infty$. If $w_{1}, w_{2}>0$ and at least one of the spaces $X_{1}\oplus_{p_{1},w_{1}}X_{1}$, $X_{2}\oplus_{p_{2},w_{2}}X_{2}$ is reflexive, then 
$$\left(X_{1}\oplus_{p_{1},w_{1}}X_{1}, X_{2}\oplus_{p_{2},w_{2}}X_{2}\right)_{\theta}=\left(X_{1},X_{2}\right)_{\theta}\oplus_{p,w}\left(X_{1},X_{2}\right)_{\theta},$$
where $\frac{1}{p}=\frac{1-\theta}{p_{1}}+\frac{\theta}{p_{2}},$ $0<\theta<1$ and $w=w_{1}^{p(1-\theta)/p_{1}}w_{2}^{p\theta/p_{2}}.$    
\end{proposition}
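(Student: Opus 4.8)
The plan is to deduce the weighted statement from the unweighted Proposition \ref{calderon interpolation 1} by an isometric rescaling of the second coordinate, implemented through an analytic family of diagonal operators so that the rescaling is compatible with both endpoints at once. Write $E:=(X_{1},X_{2})_{\theta}$, and set $A_{0}:=X_{1}\oplus_{p_{1},w_{1}}X_{1}$, $A_{1}:=X_{2}\oplus_{p_{2},w_{2}}X_{2}$, together with the unweighted spaces $B_{0}:=X_{1}\oplus_{p_{1}}X_{1}$, $B_{1}:=X_{2}\oplus_{p_{2}}X_{2}$. For $1\le p_{1},p_{2}<\infty$ the maps $(x,y)\mapsto (x,w_{1}^{1/p_{1}}y)$ and $(x,y)\mapsto (x,w_{2}^{1/p_{2}}y)$ are isometric isomorphisms $A_{0}\to B_{0}$ and $A_{1}\to B_{1}$, respectively. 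Since isometric isomorphisms preserve reflexivity, the hypothesis that one of $A_{0},A_{1}$ is reflexive is equivalent to the same hypothesis for $B_{0},B_{1}$, so Proposition \ref{calderon interpolation 1} applies to the couple $(B_{0},B_{1})$ and yields $(B_{0},B_{1})_{\theta}=E\oplus_{p}E$ with $\tfrac1p=\tfrac{1-\theta}{p_{1}}+\tfrac{\theta}{p_{2}}$.

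Next I would introduce the single analytic multiplier reconciling the two endpoint rescalings. Define
\[
\phi(z):=w_{1}^{(1-z)/p_{1}}\,w_{2}^{z/p_{2}},\qquad 0\le \mathrm{Re}\,z\le 1,
\]
which is analytic, nowhere vanishing, and bounded (with bounded inverse) on the strip, and satisfies $|\phi(it)|=w_{1}^{1/p_{1}}$ and $|\phi(1+it)|=w_{2}^{1/p_{2}}$ for all $t\in\mathbb{R}$. For each $z$ in the strip set $S_{z}(x,y):=(x,\phi(z)y)$, acting on pairs in $(X_{1}+X_{2})\times(X_{1}+X_{2})$, which is the common ambient space of both couples. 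By the two boundary modulus computations, $S_{it}$ is an isometry of $A_{0}$ onto $B_{0}$ and $S_{1+it}$ is an isometry of $A_{1}$ onto $B_{1}$.

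The core step is to verify that the assignment $f\mapsto g$, $g(z):=S_{z}f(z)$, maps $\mathcal{F}(A_{0},A_{1})$ isometrically onto $\mathcal{F}(B_{0},B_{1})$. Because $\phi$ is analytic and bounded and $f\in\mathcal{F}(A_{0},A_{1})$, the function $g$ is again bounded, continuous on the closed strip and analytic in its interior, and $g(j+it)\to 0$ as $|t|\to\infty$, since $\phi$ has constant modulus on each boundary line; the inverse correspondence is given by $\phi^{-1}$, so the map is bijective. As $S_{it}$ and $S_{1+it}$ are boundary isometries, $\|g\|_{\mathcal{F}(B_{0},B_{1})}=\|f\|_{\mathcal{F}(A_{0},A_{1})}$, whence $S_{\theta}$ is an isometric isomorphism of $(A_{0},A_{1})_{\theta}$ onto $(B_{0},B_{1})_{\theta}$. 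Evaluating at $z=\theta$ gives $S_{\theta}(x,y)=(x,\phi(\theta)y)$ with $\phi(\theta)=w_{1}^{(1-\theta)/p_{1}}w_{2}^{\theta/p_{2}}=w^{1/p}$, where $w=w_{1}^{p(1-\theta)/p_{1}}w_{2}^{p\theta/p_{2}}$ is precisely the weight in the statement. Combining this with $(B_{0},B_{1})_{\theta}=E\oplus_{p}E$ and the identity $\|(x,w^{1/p}y)\|_{E\oplus_{p}E}=(\|x\|_{E}^{p}+w\|y\|_{E}^{p})^{1/p}=\|(x,y)\|_{E\oplus_{p,w}E}$, I obtain $(A_{0},A_{1})_{\theta}=E\oplus_{p,w}E=(X_{1},X_{2})_{\theta}\oplus_{p,w}(X_{1},X_{2})_{\theta}$, which is the claim. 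The cases in which $p_{1}$ or $p_{2}$ equals $\infty$ are handled by the same scheme with the obvious modification of the rescaling factor on that endpoint, as in the references cited.

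The main obstacle is the verification in the third paragraph that $S_{z}$ genuinely carries the admissible class $\mathcal{F}(A_{0},A_{1})$ onto $\mathcal{F}(B_{0},B_{1})$: one must confirm that multiplying the second coordinate by the scalar $\phi(z)$ preserves analyticity, boundedness, continuity up to the boundary, and the decay of $f(j+it)$ as $|t|\to\infty$, and that the boundary modulus of $\phi$ is exactly the factor converting the weighted endpoint norms into the unweighted ones. Everything else — the transfer of the reflexivity hypothesis and the weight bookkeeping $\phi(\theta)^{p}=w$ — is routine.
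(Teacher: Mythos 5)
The paper does not prove this proposition at all: it is quoted from Calder\'on \cite{calderon1964intermediate} and Bergh--L\"ofstr\"om \cite[Theorem 5.5.3]{lofstrom1976interpolation}, so there is no in-paper argument to compare against. Your proof is a correct, self-contained reduction of the weighted statement to the unweighted Proposition \ref{calderon interpolation 1}, and the mechanism you use --- an analytic scalar multiplier $\phi(z)=w_{1}^{(1-z)/p_{1}}w_{2}^{z/p_{2}}$ of constant modulus $w_{1}^{1/p_{1}}$, resp.\ $w_{2}^{1/p_{2}}$, on the two boundary lines, inducing a bijective boundary-isometric correspondence $f\mapsto S_{\cdot}f(\cdot)$ between $\mathcal{F}(A_{0},A_{1})$ and $\mathcal{F}(B_{0},B_{1})$ --- is exactly the standard change-of-weights device for the complex method, and all the key verifications (preservation of analyticity, boundedness, boundary continuity and decay, the norm identity on the boundary, and the bookkeeping $\phi(\theta)^{p}=w$) are present and correct. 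The one point worth tightening is your last sentence: when $p_{1}=\infty$ (say) the endpoint rescaling must be $y\mapsto w_{1}^{1-z}\cdot(\text{second factor})$ rather than $w_{1}^{(1-z)/p_{1}}=1$, and the weight this produces at $z=\theta$ is $w_{1}^{p(1-\theta)}w_{2}^{p\theta/p_{2}}$, which does not match the displayed formula $w=w_{1}^{p(1-\theta)/p_{1}}w_{2}^{p\theta/p_{2}}$ under the paper's convention $\lv(x,y)\rv_{X\oplus_{\infty,w}X}=\max\{\lv x\rv, w\lv y\rv\}$; so the infinite-exponent case is not merely an ``obvious modification'' but a genuine convention mismatch in the statement itself. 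Since the paper only ever invokes the proposition with finite exponents ($p_{1}=2^{n}$, $p_{2}=2^{n+1}$ in the proof of Theorem \ref{mainth2}), your argument covers everything that is actually needed.
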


\section{Family of $\ell^{p}$-spaces $\ell_{sch}^{p}(\widehat{G})$ based on Schatten-von Neumann ideals}

\subsection{Duality results}\label{subsectiondual}

We first present a reflexivity result of the spaces $\ell_{sch}^{p}(\G)$, $1<p<\infty$, which will be useful to prove the duality of these spaces. 
\begin{proposition}\label{reflexivity}
The space $\ell_{sch}^{p}(\G),$ $1<p<\infty,$ is reflexive, i.e., $\left(\ell_{sch}^{p}(\G)\right)^{\ast\ast}\cong\ell_{sch}^{p}(\G).$  
\end{proposition}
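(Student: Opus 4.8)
The plan is to realize $\ell_{sch}^{p}(\G)$ as an $\ell^{p}$-direct sum of finite-dimensional Banach spaces and then appeal to the classical fact that such a sum is reflexive precisely when $1<p<\infty$ and each summand is reflexive. I would deliberately avoid routing the argument through uniform convexity and Proposition \ref{MilmanPettis}, since the Clarkson-type inequalities and the interpolation results that would furnish uniform convexity themselves take reflexivity as an input; the direct-sum route keeps the logic acyclic.

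First I would renorm the factors. For each $[\xi]\in\G$ set $X_{\xi}:=\bigl(\mathcal{S}^{p}(\mathcal{H}_{\xi}),\ \Dim(\xi)^{1/p}\|\cdot\|_{\mathcal{S}^{p}(\mathcal{H}_{\xi})}\bigr)$. Because $G$ is compact, each $\mathcal{H}_{\xi}$ is finite-dimensional, so every $X_{\xi}$ is a finite-dimensional Banach space. With this normalization the norm of Definition \ref{Schatten family of lp spaces} reads $\|H\|_{\ell_{sch}^{p}(\G)}=\bigl(\sum_{[\xi]\in\G}\|H(\xi)\|_{X_{\xi}}^{p}\bigr)^{1/p}$, so the coordinate map $H\mapsto(H(\xi))_{[\xi]\in\G}$ is an isometric isomorphism of $\ell_{sch}^{p}(\G)$ onto the $\ell^{p}$-direct sum $\bigl(\bigoplus_{[\xi]\in\G}X_{\xi}\bigr)_{\ell^{p}}$. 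To confirm this is an identification of Banach spaces, and not merely of abstract weighted sequence spaces, one checks that any family of finite $\ell^{p}$-sum-norm defines an element of $S'(\G)$, using the comparison $\|A\|_{\mathcal{S}^{2}}\le \Dim(\xi)^{|1/2-1/p|}\|A\|_{\mathcal{S}^{p}}$ on $\mathcal{H}_{\xi}$ together with the polynomial growth of $\langle\xi\rangle$ built into the definition of $S'(\G)$; the excerpt already records that $\ell_{sch}^{p}(\G)$ is a Banach space.

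Second, I would invoke the duality of $\ell^{p}$-sums. For any family $(Y_{i})_{i\in I}$ of Banach spaces, $1\le p<\infty$, and $1/p+1/q=1$, one has the isometric identification $\bigl(\bigoplus_{i}Y_{i}\bigr)_{\ell^{p}}^{\ast}\cong\bigl(\bigoplus_{i}Y_{i}^{\ast}\bigr)_{\ell^{q}}$ under the natural coordinatewise pairing, valid for an arbitrary (even uncountable) index set. For $1<p<\infty$ we also have $1<q<\infty$, so the same statement may be applied a second time to give $\bigl(\bigoplus_{i}Y_{i}\bigr)_{\ell^{p}}^{\ast\ast}\cong\bigl(\bigoplus_{i}Y_{i}^{\ast\ast}\bigr)_{\ell^{p}}$; moreover under these identifications the canonical embedding into the bidual decomposes as the coordinatewise direct sum of the canonical embeddings $J_{i}\colon Y_{i}\to Y_{i}^{\ast\ast}$.

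Finally, applying this with $Y_{\xi}=X_{\xi}$: since each $X_{\xi}$ is finite-dimensional, every $J_{\xi}$ is surjective, hence the canonical embedding of $\bigl(\bigoplus_{[\xi]}X_{\xi}\bigr)_{\ell^{p}}$ into its bidual is surjective, so the direct sum, and with it $\ell_{sch}^{p}(\G)$, is reflexive. The step demanding the most care is the second one: verifying the two successive dualities and, above all, that the canonical embedding genuinely splits coordinatewise, so that reflexivity of the factors transfers to the whole sum. Beyond this bookkeeping no analytic input is required, since the reflexivity of each finite-dimensional factor is automatic.
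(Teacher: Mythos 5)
Your proof is correct, and it takes a genuinely different route from the paper's. The paper disposes of Proposition \ref{reflexivity} in one line: it invokes the uniform convexity of $\ell_{sch}^{p}(\G)$ from Theorem \ref{mainth0} (proved only later, in Subsection \ref{geometric properties}) and then applies the Milman--Pettis theorem (Proposition \ref{MilmanPettis}). Your argument instead identifies $\ell_{sch}^{p}(\G)$ isometrically with the $\ell^{p}$-direct sum of the finite-dimensional factors $X_{\xi}=\bigl(\mathcal{S}^{p}(\mathcal{H}_{\xi}),\,\Dim(\xi)^{1/p}\lv\cdot\rv_{\mathcal{S}^{p}(\mathcal{H}_{\xi})}\bigr)$ and uses the coordinatewise duality of $\ell^{p}$-sums twice; this is elementary, self-contained, and correct. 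More importantly, your concern about circularity is well founded: in the paper, Theorem \ref{mainth0} rests on Lemma \ref{lemma0}, which rests on the Clarkson inequalities of Proposition \ref{new Clarkson type inequality}, whose proof uses Proposition \ref{calderon interpolation 1} (which requires reflexivity of one of the spaces in the couple) and Proposition \ref{duality of the direct sum}, which in turn rests on Proposition \ref{duality of lp}\,\ref{duality statement 2}, whose surjectivity argument explicitly cites Proposition \ref{reflexivity}. Read literally, the paper's one-line proof therefore closes a logical loop, and an independent argument such as yours is exactly what is needed to break it; what the paper's route buys in exchange is brevity and sharper quantitative information (moduli of convexity and smoothness), which your soft argument does not provide. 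The only point in your write-up that deserves care is the claim that the coordinate map is \emph{onto} the full direct sum, i.e.\ that every $\ell^{p}$-summable family lies in $S'(\G)$; your sketch via the comparison of Schatten norms on $\mathcal{H}_{\xi}$ and the polynomial control of $\Dim(\xi)$ by $\langle\xi\rangle$ is the right one, and even if one prefers not to chase those constants, the coordinate map at least embeds $\ell_{sch}^{p}(\G)$ isometrically as a closed subspace of the reflexive direct sum, and closed subspaces of reflexive spaces are reflexive, so the conclusion survives either way.
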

\begin{proof} Note that $\ell_{sch}^{p}(\G),$ $1<p<\infty,$ is uniformly convex, by Theorem \ref{mainth0}, which will be proved in Subsection \ref{geometric properties}. We end the proof with the Milman-Pettis theorem which states that every uniformly convex space is reflexive.  
\end{proof}

We now prove the following duality between spaces $\ell_{sch}^{p}(\G)$.
\begin{proposition}\label{duality of lp}
Let $1\leq p<\infty$ and $\frac{1}{p}+\frac{1}{q}=1.$ Then
\begin{enumerate}[label=(\roman*)]
\item\label{duality statement 1} For any $H\in\ell_{sch}^{p}(\G)$, we have
\begin{equation}\label{equality for duality bracket}
\lv H\psrv=\sup\{\left|\langle H, F\rangle_{\G}\right|:\quad F\in\ell_{sch}^{q}(\G),\quad \lv F\rv_{\ell_{sch}^{q}(\G)}=1\},
\end{equation}
where $$\langle H, F\rangle_{\G}=\sum_{[\xi]\in\G}\Dim(\xi)\mathrm{Tr}(H(\xi)F(\xi)).$$
\item\label{duality statement 2} $\left(\ell_{sch}^{p}(\G)\right)^{\ast}$ is isometrially isomorphic to $\ell_{sch}^{q}(\G)$, i.e. $\left(\ell_{sch}^{p}(\G)\right)^{\ast}\cong \ell_{sch}^{q}(\G).$
\end{enumerate}
\end{proposition}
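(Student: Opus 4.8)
The plan is to prove part \ref{duality statement 1} first and then obtain part \ref{duality statement 2} from it together with the trace duality on each fibre $\mathcal{H}_{\xi}$. For the inequality \enquote{$\leq$} in \ref{duality statement 1} I would use that $|\mathrm{Tr}(H(\xi)F(\xi))|\leq\lv H(\xi)F(\xi)\rv_{\mathcal{S}^{1}(\mathcal{H}_{\xi})}$ on each fibre, so that $|\langle H,F\rangle_{\G}|\leq\lv HF\rv_{\ell_{sch}^{1}(\G)}$, and then H\"older's inequality (Proposition \ref{Holder inequality}) with $r=1$ to bound this by $\lv H\rv_{\ell_{sch}^{p}(\G)}\lv F\rv_{\ell_{sch}^{q}(\G)}$; taking the supremum over $\lv F\rv_{\ell_{sch}^{q}(\G)}=1$ gives the bound by $\lv H\rv_{\ell_{sch}^{p}(\G)}$. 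For \enquote{$\geq$} I would construct an explicit maximiser from the polar decomposition $H(\xi)=U_{\xi}|H(\xi)|$ on each (finite-dimensional) fibre: for $1<p<\infty$ set
$$F(\xi):=\lv H\rv_{\ell_{sch}^{p}(\G)}^{1-p}\,|H(\xi)|^{p-1}U_{\xi}^{\ast},\qquad[\xi]\in\G,$$
and for $p=1$ set $F(\xi):=U_{\xi}^{\ast}$. Then $\mathrm{Tr}(H(\xi)F(\xi))$ is a positive multiple of $\mathrm{Tr}(|H(\xi)|^{p})=\lv H(\xi)\rv_{\mathcal{S}^{p}(\mathcal{H}_{\xi})}^{p}$, which yields $\langle H,F\rangle_{\G}=\lv H\rv_{\ell_{sch}^{p}(\G)}$, while the identity $(p-1)q=p$ together with the invariance of singular values under the partial isometry $U_{\xi}^{\ast}$ gives $\lv F(\xi)\rv_{\mathcal{S}^{q}(\mathcal{H}_{\xi})}^{q}=\lv H(\xi)\rv_{\mathcal{S}^{p}(\mathcal{H}_{\xi})}^{p}$, hence $\lv F\rv_{\ell_{sch}^{q}(\G)}=1$ (and $\lv F\rv_{\ell_{sch}^{\infty}(\G)}=1$ when $p=1$). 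This establishes \ref{duality statement 1}.

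For part \ref{duality statement 2} I would consider the canonical map $\Phi\colon\ell_{sch}^{q}(\G)\to\left(\ell_{sch}^{p}(\G)\right)^{\ast}$, $\Phi(F)(H)=\langle H,F\rangle_{\G}$. H\"older's inequality makes $\Phi$ contractive, and $\Phi$ is isometric because $\lv F\rv_{\ell_{sch}^{q}(\G)}=\sup\{|\langle H,F\rangle_{\G}|:\lv H\rv_{\ell_{sch}^{p}(\G)}=1\}$: for $1<p<\infty$ this is \ref{duality statement 1} with $p$ and $q$ interchanged (the trace pairing being symmetric), and for the endpoint $p=1$ it follows by testing $F$ against rank-one operators $H$ supported on a single fibre, using $\left(\mathcal{S}^{1}(\mathcal{H}_{\xi})\right)^{\ast}\cong\mathcal{L}(\mathcal{H}_{\xi})$. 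It remains to prove $\Phi$ is onto. For $1<p<\infty$ this is quickest via reflexivity (Proposition \ref{reflexivity}): the range of $\Phi$ is closed, so if it were proper, Hahn--Banach would supply a nonzero functional on $\left(\ell_{sch}^{p}(\G)\right)^{\ast}$ annihilating it, which by reflexivity is a nonzero $H\in\ell_{sch}^{p}(\G)$ with $\langle H,F\rangle_{\G}=0$ for all $F$, contradicting \ref{duality statement 1}. For $p=1$, given $\phi\in\left(\ell_{sch}^{1}(\G)\right)^{\ast}$, I would reconstruct $F$ fibrewise: for each $[\xi]$ the functional $A\mapsto\phi(H_{A})$ on $\mathcal{S}^{1}(\mathcal{H}_{\xi})$ (where $H_{A}$ is supported at $\xi$ with value $A$) is represented via $\left(\mathcal{S}^{1}\right)^{\ast}\cong\mathcal{L}$ by some $F(\xi)$ with $\phi(H_{A})=\Dim(\xi)\mathrm{Tr}(AF(\xi))$; since $\lv H_{A}\rv_{\ell_{sch}^{1}(\G)}=\Dim(\xi)\lv A\rv_{\mathcal{S}^{1}(\mathcal{H}_{\xi})}$, one reads off $\lv F(\xi)\rv_{\mathcal{L}(\mathcal{H}_{\xi})}\leq\lv\phi\rv$, so $\lv F\rv_{\ell_{sch}^{\infty}(\G)}\leq\lv\phi\rv$, and $\Phi(F)$ agrees with $\phi$ on finitely supported elements, hence on all of $\ell_{sch}^{1}(\G)$ by density and continuity.

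The step I expect to be the main obstacle is precisely the endpoint $p=1$, $q=\infty$ of surjectivity: reflexivity is unavailable, so one is forced into the fibrewise reconstruction, and one must still verify that the operator-norm-bounded $F$ produced there genuinely belongs to $S'(\G)$ (membership in $\ell_{sch}^{\infty}(\G)$ requires this, by Definition \ref{Schatten family of lp spaces}). I would derive this from $\lv F(\xi)\rv_{\mathcal{S}^{2}(\mathcal{H}_{\xi})}\leq\Dim(\xi)^{1/2}\lv F(\xi)\rv_{\mathcal{L}(\mathcal{H}_{\xi})}$ and the polynomial growth of $\Dim(\xi)$ and $\langle\xi\rangle$, which makes $\sum_{[\xi]\in\G}\Dim(\xi)^{3/2}\langle\xi\rangle^{-k}$ converge for large $k$. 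The remaining ingredients are routine: the singular-value bookkeeping in \ref{duality statement 1} (that $U_{\xi}^{\ast}U_{\xi}$ acts as the identity on the range of $|H(\xi)|$ and that left multiplication by $U_{\xi}^{\ast}$ leaves Schatten norms unchanged), and the density of finitely supported elements in $\ell_{sch}^{p}(\G)$ for $p<\infty$.
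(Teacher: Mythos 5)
Your argument is correct, and for part \ref{duality statement 1} and for the case $1<p<\infty$ of part \ref{duality statement 2} it is essentially the paper's proof: the same H\"older upper bound for the pairing, the same extremizer $\lv H\psrv^{1-p}\lvert H(\xi)\rvert^{p-1}U_{\xi}^{\ast}$ built from the polar decomposition (with the same singular-value bookkeeping $(p-1)q=p$), the same canonical map $\Phi$, and the same surjectivity argument via closedness of the range, Hahn--Banach, and Proposition \ref{reflexivity} --- the only cosmetic difference being that you conclude $H=0$ directly from part \ref{duality statement 1}, whereas the paper tests against a second explicit element $F(\xi)=\lvert H(\xi)\rvert^{p-2}H^{\ast}(\xi)$. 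Where you genuinely depart from the paper is at the endpoint $p=1$, $q=\infty$, and your instinct that this is the delicate point is right: the paper's isometry step applies part \ref{duality statement 1} with the roles of $p$ and $q$ interchanged, which is unavailable when $q=\infty$, and its surjectivity step rests on reflexivity, which fails for $\ell_{sch}^{1}(\G)$; the paper simply does not treat this case. Your fibrewise reconstruction of $F$ from $\left(\mathcal{S}^{1}(\mathcal{H}_{\xi})\right)^{\ast}\cong\mathcal{L}(\mathcal{H}_{\xi})$ with the normalization $\phi(H_{A})=\Dim(\xi)\mathrm{Tr}(AF(\xi))$, together with the density of finitely supported symbols, is the standard and correct way to close this, and your extra check that the resulting $F$ lies in $S'(\G)$ (via $\lv F(\xi)\rv_{\mathcal{S}^{2}(\mathcal{H}_{\xi})}\leq \Dim(\xi)^{1/2}\lv F(\xi)\rv_{\mathcal{L}(\mathcal{H}_{\xi})}$ and polynomial growth) addresses a point the paper passes over silently even for its own extremizers. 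One caveat you inherit from the paper: Proposition \ref{reflexivity} is deduced from Theorem \ref{mainth0}, whose proof feeds back through the interpolation and Clarkson results into the present proposition, so if you want a self-contained account it is worth noting that your fibrewise method for $p=1$ in fact extends verbatim to all $1\leq p<\infty$ (estimate $\lv F\rv_{\ell_{sch}^{q}(\G)}$ by testing over finite subsets of $\G$) and dispenses with reflexivity altogether.
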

\begin{proof} 
\ref{duality statement 1}. Let $1< p<\infty$, and $H\in\ell_{sch}^{p}(\G)$ and $F\in\ell_{sch}^{q}(\G).$ Then, by \cite[Theorem III.8.5, p. 104]{gohberg1969translations}, one has
\begin{equation*}\begin{split}
\lvert \langle H,F\rangle_{\G} \rvert\leq \sum_{[\xi]\in\G}\Dim(\xi) \lv H(\xi)F(\xi)\rv_{\mathcal{S}^{1}(\mathcal{H}_{\xi})}.
\end{split}\end{equation*}
Hence, 
\begin{equation*}\begin{split}
&\sum_{[\xi]\in\G}\Dim(\xi) \lv H(\xi)F(\xi)\rv_{\mathcal{S}^{1}(\mathcal{H}_{\xi})}\overset{\eqref{Holder}}{\le}\sum_{[\xi]\in\G}\Dim(\xi)^{\frac{1}{p}} \lv H(\xi)\rv_{\mathcal{S}^{p}(\mathcal{H}_{\xi})}\Dim(\xi)^{\frac{1}{q}}\lv F(\xi)\rv_{\mathcal{S}^{q}(\mathcal{H}_{\xi})}\\
&\leq \left[\sum_{[\xi]\in\G}\left(\Dim(\xi)^{\frac{1}{p}}\right)^{p} \lv H(\xi)\rv_{\mathcal{S}^{p}(\mathcal{H}_{\xi})}^{p}\right]^{\frac{1}{p}} \left[\sum_{[\xi]\in\G}\left(\Dim(\xi)^{\frac{1}{q}}\right)^{q} \lv F(\xi)\rv_{\mathcal{S}^{q}(\mathcal{H}_{\xi})}^{q}\right]^{\frac{1}{q}}=\lv H\rv_{\ell_{sch}^{p}(\G)}\lv F\rv_{\ell_{sch}^{q}(\G)}.
\end{split}\end{equation*}
If $p=1$, then by \cite[Theorem III.8.5, p. 104]{gohberg1969translations}, one has
\begin{equation*}\begin{split}
\lvert \langle H,F\rangle_{\G}& \rvert\leq \sum_{[\xi]\in \G}\Dim(\xi)\lv H(\xi) F(\xi)\rv_{\mathcal{S}^{1}(\mathcal{H}_{\xi})}\leq \sum_{[\xi]\in \G}\Dim(\xi)\lv H(\xi)\rv_{\mathcal{S}^{1}(\mathcal{H}_{\xi})}\lv F(\xi)\rv_{\mathcal{L}(\mathcal{H}_{\xi})}\\
&\leq \sum_{[\xi]\in \G}\Dim(\xi)\lv H(\xi)\rv_{\mathcal{S}^{1}(\mathcal{H}_{\xi})} \sup_{[\xi]\in\G}\lv F(\xi)\rv_{\mathcal{L}(\mathcal{H}_{\xi})}\leq \sum_{[\xi]\in \G}\Dim(\xi)\lv H(\xi)\rv_{\mathcal{S}^{1}(\mathcal{H}_{\xi})} \lv H\rv_{\ell_{sch}^{\infty}(\G)}\\
&=\lv H\rv_{\ell_{sch}^{1}(\G)}\lv F\rv_{\ell_{sch}^{\infty}(\G)}.
\end{split}\end{equation*}
Therefore, for all $1\leq p<\infty$, we have
$$\lvert \langle H,F\rangle_{\G} \rvert\leq\lv H\rv_{\ell_{sch}^{p}(\G)}\lv F\rv_{\ell_{sch}^{q}(\G)}$$
with $\frac{1}{p}+\frac{1}{q}=1.$
Thus, we have
\begin{equation}\label{duality bracket inequality 1}
\lv H\psrv\geq\sup\{\left|\langle H, F\rangle_{\G}\right|:\quad F\in\ell_{sch}^{q}(\G),\quad \lv F\rv_{\ell_{sch}^{q}(\G)}=1\}.
\end{equation}

For $H\in\ell_{sch}^{p}(\G)$, construct $F\in\ell_{sch}^{q}(\G)$ by the equality 
$$F(\xi)=\frac{\lvert H(\xi)\rvert^{p-1}U^{\ast}(\xi)}{\lv H\psrv^{p-1}},\quad [\xi]\in\G,$$
where $U(\xi)$ is the partial isometry from the polar decomposition of the finite-rank operators $H(\xi)=U(\xi)\lvert H(\xi)\rvert,$ $[
\xi]\in\G.$ Note that
\begin{equation*}\begin{split}
\lv F\rv_{\ell_{sch}^{q}(\G)}^{q}&=\sum_{[\xi]\in\G}\Dim(\xi)\lv F(\xi)\rv_{\mathcal{S}^{q}(\mathcal{H}_{\xi})}^{q}=\sum_{[\xi]\in\G}\Dim(\xi)\mathrm{Tr}\left(\lvert F^{\ast}(\xi)\rvert^{q}\right)\\
&=\sum_{[\xi]\in\G}\Dim(\xi)\mathrm{Tr}\left((F(\xi)F^{\ast}(\xi))^{\frac{q}{2}}\right)=\sum_{[\xi]\in\G}\Dim(\xi)\mathrm{Tr}\left(\left(\frac{\lvert H(\xi)\rvert^{p-1}U^{\ast}(\xi)}{\lv H\psrv^{p-1}}\cdot\frac{U(\xi)\lvert H(\xi)\rvert^{p-1}}{\lv H\psrv^{p-1}}\right)^{\frac{q}{2}}\right)\\
&=\sum_{[\xi]\in\G}\Dim(\xi)\mathrm{Tr}\left(\left(\frac{\lvert H(\xi)\rvert^{2(p-1)}}{\lv H\psrv^{2(p-1)}}\right)^{\frac{q}{2}}\right)=\frac{1}{\lv H\psrv^{p}}\sum_{[\xi]\in\G}\Dim(\xi)\mathrm{Tr}\left(\lvert H(\xi)\rvert^{p}\right)\\
&=\frac{1}{\lv H\psrv^{p}}\sum_{[\xi]\in\G}\Dim(\xi)\lv H(\xi)\rv_{\mathcal{S}^{p}(\mathcal{H}_{\xi})}^{p}=\frac{\lv H\psrv^{p}}{\lv H\psrv^{p}}=1.
\end{split}\end{equation*}
Moreover, 
\begin{equation}\begin{split}\label{duality unitar inv}
\langle H, F\rangle_{\G}&=\sum_{[\xi]\in\G}\Dim(\xi)\mathrm{Tr}(H(\xi)F(\xi))=\sum_{[\xi]\in\G}\Dim(\xi)\mathrm{Tr}\left(\frac{H(\xi)\lvert H(\xi)\rvert^{p-1}U^{\ast}(\xi)}{\lv H\psrv^{p-1}}\right)\\
&=\sum_{[\xi]\in\G}\Dim(\xi)\mathrm{Tr}\left(\frac{U(\xi)\lvert H(\xi)\rvert^{p}U^{\ast}(\xi)}{\lv H\psrv^{p-1}}\right)=\sum_{[\xi]\in\G}\Dim(\xi)\mathrm{Tr}\left(\frac{\lvert H(\xi)\rvert^{p}}{\lv H\psrv^{p-1}}\right)\\
&=\frac{1}{\lv H\psrv^{p-1}}\sum_{[\xi]\in\G}\Dim(\xi)\mathrm{Tr}\left(\lvert H(\xi)\rvert^{p}\right)=\frac{1}{\lv H\psrv^{p-1}}\sum_{[\xi]\in\G}\Dim(\xi)\lv H(\xi)\rv_{\mathcal{S}^{p}(\mathcal{H}_{\xi})}^{p}\\
&=\frac{\lv H\psrv^{p}}{\lv H\psrv^{p-1}}=\lv H\psrv,
\end{split}\end{equation}
where the fourth equality follows from the fact that the trace is unitarily invariant. Hence, \eqref{duality unitar inv} together with \eqref{duality bracket inequality 1} end the proof of \ref{duality statement 1}.

\ref{duality statement 2}. Let $F\in \ell_{sch}^{q}(\widehat{G})$. Then define a mapping $\Phi:\ell_{sch}^{q}(\widehat{G})\mapsto\left(\ell_{sch}^{p}(\G)\right)^{\ast}$ in the following form (see \cite[Definition 10.3.29]{ruzhansky2010pseudo} for more details)
\begin{equation*}
\Phi(F)(H)=\langle F,H \rangle_{\G}:=\sum_{[\xi]\in\G}\Dim(\xi)\mathrm{Tr}(F(\xi)H(\xi)),\quad H\in\ell_{sch}^{p}(\G).
\end{equation*}

We first show that the mapping $\Phi$ is norm preserving. By part \ref{duality statement 1}, one has
\begin{equation}\begin{split}
\lv \Phi(F)\rv_{\left(\ell_{sch}^{p}(\G)\right)^{\ast}}&=\sup\left\{\left|\Phi(F)(H)\right|:\quad H\in\ell_{sch}^{p}(\G),\quad \lv H\psrv=1
\right\}\\
&=\sup\left\{\left|\langle F, H\rangle_{\G}\right|:\quad H\in\ell_{sch}^{p}(\G),\quad \lv H\psrv=1
\right\}=\lv F\rv_{\ell_{sch}^{q}(\G)},\quad \forall F\in\ell_{sch}^{q}(\G).
\end{split}\end{equation}

Finally, it is left to show that the mapping $\Phi$ is surjective. Since $\ell_{sch}^{q}(\G)$ is Banach space and $\Phi$ is bounded linear mapping, it follows that $\Phi(\ell_{sch}^{q}(\G))$ is a closed subspace of $\left(\ell_{sch}^{p}(\G)\right)^{\ast}$. We now prove that $\Phi(\ell_{sch}^{q}(\G))=\left(\ell_{sch}^{p}(\G)\right)^{\ast}$. Note that it is enough to prove that $\Phi(\ell_{sch}^{q}(\G))$ is dense in $\left(\ell_{sch}^{p}(\G)\right)^{\ast}$. Let $H\in\left(\ell_{sch}^{p}(\G)\right)^{\ast\ast}$ be given such that $H(\phi)=0,$ for any $\phi\in\Phi(\ell_{sch}^{q}(\G)).$ Hence, for any $F\in\ell_{sch}^{q}(\G)$, we have that $\Phi(F)=\psi\in\Phi(\ell_{sch}^{q}(\G))$ for some $\psi.$ Thus, 
$$H(\Phi(F))=0,\quad \forall F\in\ell_{sch}^{q}(\G).$$ Since $\ell_{sch}^{p}(\G)$ is reflexive (cf. Proposition \ref{reflexivity}), we have
$$H(\Phi(F))=\Phi(F)(H)=\sum_{[\xi]\in\G}\dim(\xi)\mathrm{Tr}(F(\xi)H(\xi))=0,\quad \forall F\in\ell_{sch}^{q}(\G).$$
Choose $F\in\ell_{sch}^{q}(\G)$ such that 
$$F(\xi)=|H(\xi)|^{p-2}H^{\ast}(\xi),\quad \forall[\xi]\in\G.$$ Then, we have 
\begin{equation*}\begin{split}
&\lv F\rv_{\ell_{sch}^{q}(\G)}^{q}=\sum_{[\xi]\in\G}\dim(\xi)\lv F(\xi)\rv_{\mathcal{S}^{q}(\mathcal{H}_{\xi})}^{q}=\sum_{[\xi]\in\G}\dim(\xi)\mathrm{Tr}\left(|F(\xi)|^{q}\right)\\
&=\sum_{[\xi]\in\G}\dim(\xi)\mathrm{Tr}\left(\left||H(\xi)|^{p-2}H^{\ast}(\xi)\right|^{q}\right)=\sum_{[\xi]\in\G}\dim(\xi)\mathrm{Tr}\left(\left(\left(|H(\xi)|^{p-2}H^{\ast}(\xi)\right)^{\ast}|H(\xi)|^{p-2}H^{\ast}(\xi)\right)^{q/2}\right)\\
&=\sum_{[\xi]\in\G}\dim(\xi)\mathrm{Tr}\left(\left(H(\xi)|H(\xi)|^{2(p-2)}H^{\ast}(\xi)\right)^{q/2}\right)=\sum_{[\xi]\in\G}\dim(\xi)\mathrm{Tr}\left(\left(H^{\ast}(\xi)H(\xi)|H(\xi)|^{2(p-2)}\right)^{q/2}\right)\\
&=\sum_{[\xi]\in\G}\dim(\xi)\mathrm{Tr}\left(\left(|H(\xi)|^{2(p-2)+2}\right)^{q/2}\right)=\sum_{[\xi]\in\G}\dim(\xi)\mathrm{Tr}\left(|H(\xi)|^{p}\right)\\
&=\sum_{[\xi]\in\G}\dim(\xi)\lv H(\xi)\rv_{\mathcal{S}^{p}(\mathcal{H}_{\xi})}^{p}=\lv H\rv_{\ell_{sch}^{p}(\G)}^{p}<\infty.
\end{split}\end{equation*}
Here to obtain the sixth equality, we used the following fact that we name as \enquote{Fact A} for future reference. Note that $\mathrm{Tr}(AB)=\mathrm{Tr}(BA)$ for finite rank operators $A, B$ \cite[Section I.8.2]{gohberg1969translations}. Then, for a natural number $k$, one can show that $$\mathrm{Tr}\left((AB)^k\right)=\mathrm{Tr}\left(A(BA)^{k-1}B\right)=\mathrm{Tr}\left((BA)^{k-1}BA\right)=\mathrm{Tr}\left((BA)^{k}\right).$$ Hence, $\mathrm{Tr}(p(AB))=\mathrm{Tr}(p(BA))$ for any polynomial $p$ defined on the compact interval which contains the spectrum of both $A$ and $B$. Therefore, by Stone-Weierstrass theorem, one has $\mathrm{Tr}(f(AB))=\mathrm{Tr}(f(BA)),$ for any continuous real-valued function $f$ on the same compact interval. Especially, the same holds for $f(t)=t^{\frac{q}{2}}.$

Continuing the proof, we have 
\begin{equation*}\begin{split}
0&=\sum_{[\xi]\in\G}\dim(\xi)\mathrm{Tr}\left(F(\xi)H(\xi)\right)=\sum_{[\xi]\in\G}\dim(\xi)\mathrm{Tr}\left(|H(\xi)|^{p-2}H^{\ast}(\xi)H(\xi)\right)\\
&=\sum_{[\xi]\in\G}\dim(\xi)\mathrm{Tr}\left(|H(\xi)|^{p}\right)=\lv H\rv_{\ell_{sch}^{p}(\G)}^{p}.
\end{split}\end{equation*}
Therefore, $H\equiv 0.$ That means $\Phi(\ell_{sch}^{q}(\G))$ is dense in $\left(\ell_{sch}^{p}(\G)\right)^{\ast}$, which ends the proof. 
\end{proof}

We also have the following duality result.
\begin{proposition}\label{duality of the direct sum}
Let $w>0$. Let $1<p,q,r,s<\infty$ be given such that $\frac{1}{p}+\frac{1}{q}=1$ and $\frac{1}{r}+\frac{1}{s}=1$. Then 
\begin{equation}\label{dualdirectsum 1}
\left(\ell_{sch}^{p}(\G)\oplus_{r,w}\ell_{sch}^{p}(\G)\right)^{\ast}\cong\ell_{sch}^{q}(\G)\oplus_{s, \frac{1}{w}}\ell_{sch}^{q}(\G).
\end{equation}
In particular, 
\begin{equation}\label{dualdirectsum 2}
\left(\ell_{sch}^{p}(\G)\oplus_{r}\ell_{sch}^{p}(\G)\right)^{\ast}\cong\ell_{sch}^{q}(\G)\oplus_{s}\ell_{sch}^{q}(\G).
\end{equation}
\end{proposition}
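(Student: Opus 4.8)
The plan is to reduce \eqref{dualdirectsum 1} to the elementary duality of a weighted two-fold direct sum, feeding in the coordinatewise duality $\left(\ell_{sch}^{p}(\G)\right)^{\ast}\cong\ell_{sch}^{q}(\G)$ already established in Proposition~\ref{duality of lp}. Write $X=\ell_{sch}^{p}(\G)$ and use the bracket $\langle\cdot,\cdot\rangle_{\G}$ in each coordinate. First I would settle the algebraic (surjectivity) part by splitting functionals along the two summands: the embeddings $\iota_{1}(H)=(H,0)$ and $\iota_{2}(H)=(0,H)$ are isometries of $X$ into $X\oplus_{r,w}X$, so for $\Psi\in(X\oplus_{r,w}X)^{\ast}$ the functionals $\Psi\circ\iota_{1},\Psi\circ\iota_{2}$ lie in $X^{\ast}$ and, by Proposition~\ref{duality of lp}~\ref{duality statement 2}, are represented by unique $F_{1},F_{2}\in\ell_{sch}^{q}(\G)$. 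Since $\Psi(H_{1},H_{2})=\Psi\circ\iota_{1}(H_{1})+\Psi\circ\iota_{2}(H_{2})$, every $\Psi$ has the form $(H_{1},H_{2})\mapsto\langle H_{1},F_{1}\rangle_{\G}+\langle H_{2},F_{2}\rangle_{\G}$, so the assignment $(F_{1},F_{2})\mapsto\Psi$ is onto and no density argument of the kind used in Proposition~\ref{duality of lp} is required here.

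The real content is the norm identity. Here I would use the isometric rescaling $T\colon(H_{1},H_{2})\mapsto(H_{1},w^{1/r}H_{2})$, which carries $X\oplus_{r,w}X$ onto the unweighted sum $X\oplus_{r}X$ and reduces matters, via $T^{\ast}$, to the unweighted duality $(X\oplus_{r}X)^{\ast}\cong X^{\ast}\oplus_{s}X^{\ast}$. For the unweighted sum the functional norm of $(H_{1},H_{2})\mapsto\langle H_{1},F_{1}\rangle_{\G}+\langle H_{2},F_{2}\rangle_{\G}$ is obtained by optimizing each coordinate first---Proposition~\ref{duality of lp}~\ref{duality statement 1} gives $\sup_{\|H_{i}\|=a_{i}}|\langle H_{i},F_{i}\rangle_{\G}|=a_{i}\|F_{i}\|_{\ell_{sch}^{q}(\G)}$---and then maximizing $a_{1}\|F_{1}\|_{\ell_{sch}^{q}(\G)}+a_{2}\|F_{2}\|_{\ell_{sch}^{q}(\G)}$ subject to $a_{1}^{r}+a_{2}^{r}\leq 1$, which is precisely the scalar $\ell^{r}$--$\ell^{s}$ duality and returns $\left(\|F_{1}\|_{\ell_{sch}^{q}(\G)}^{s}+\|F_{2}\|_{\ell_{sch}^{q}(\G)}^{s}\right)^{1/s}$. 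Re-expressing this norm in terms of the original, un-rescaled representatives of $F_{1},F_{2}$ reintroduces the weight on the dual side and yields \eqref{dualdirectsum 1}; taking $w=1$ gives \eqref{dualdirectsum 2}.

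The step I expect to be the main obstacle is the weight bookkeeping: one must check that the weight carried to the dual side by $T^{\ast}$ is exactly $\tfrac{1}{w}$ as claimed, which amounts to fixing the correct normalization of the duality bracket between $X\oplus_{r,w}X$ and $\ell_{sch}^{q}(\G)\oplus_{s,1/w}\ell_{sch}^{q}(\G)$ so that Hölder's inequality in the rescaled variables is saturated. A secondary technical point is that the two coordinatewise suprema must be attained simultaneously; this is guaranteed by the explicit norming elements built in the proof of Proposition~\ref{duality of lp}~\ref{duality statement 1}. Finally, reflexivity of each summand (Proposition~\ref{reflexivity}) makes the finite direct sum reflexive, which is consistent with the duality being realized by the bracket above rather than by some larger dual, and which is what later licenses the interpolation results of Proposition~\ref{calderon interpolation 2}.
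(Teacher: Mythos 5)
Your proposal is correct, and for the norm identity it takes a genuinely different (and cleaner) route than the paper, so a comparison is worthwhile. The surjectivity step is the same in both: split $\Psi(H_{1},H_{2})=\Psi(H_{1},0)+\Psi(0,H_{2})$ and represent each restriction via Proposition~\ref{duality of lp}. For the isometry, the paper works directly in the weighted space: it posits the map $T(\varphi_{1},\varphi_{2})(H_{1},H_{2})=\varphi_{1}(H_{1})+w^{1/r-1/s}\varphi_{2}(H_{2})$, gets $\lv T(\varphi_{1},\varphi_{2})\rv\leq\lv(\varphi_{1},\varphi_{2})\rv$ from H\"older, and obtains the reverse inequality by testing against explicit elements $f_{1},f_{2}$ built from $\varepsilon$-norming vectors; your route --- rescale $(H_{1},H_{2})\mapsto(H_{1},w^{1/r}H_{2})$ onto the unweighted sum, optimize coordinatewise using Proposition~\ref{duality of lp}~\ref{duality statement 1}, then invoke scalar $\ell^{r}$--$\ell^{s}$ duality --- avoids all the $\varepsilon$-bookkeeping. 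The one point you flag as the main obstacle does need to be settled, and it resolves as follows: with the \emph{plain} pairing $\Psi(H_{1},H_{2})=\langle H_{1},F_{1}\rangle_{\G}+\langle H_{2},F_{2}\rangle_{\G}$, your computation gives the dual norm $\bigl(\lv F_{1}\rv_{\ell_{sch}^{q}(\G)}^{s}+w^{-s/r}\lv F_{2}\rv_{\ell_{sch}^{q}(\G)}^{s}\bigr)^{1/s}$, i.e.\ the weight transported to the dual side is $w^{-s/r}$, not $\tfrac{1}{w}$. To land on the stated weight one must either insert the normalization factor $w^{1/r-1/s}$ into the second coordinate of the pairing --- which is exactly what the paper's map $T$ does --- or post-compose with the diagonal isometry $(F_{1},F_{2})\mapsto(F_{1},w^{1/s-1/r}F_{2})$ of $\ell_{sch}^{q}(\G)\oplus_{s,w^{-s/r}}\ell_{sch}^{q}(\G)$ onto $\ell_{sch}^{q}(\G)\oplus_{s,\frac{1}{w}}\ell_{sch}^{q}(\G)$. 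Either fix is one line, and since the proposition only asserts an isometric isomorphism the conclusion holds either way; note also that in the later applications (Proposition~\ref{new Clarkson type inequality} and Theorem~\ref{mainth2}) one has $r=s$, so the discrepancy $w^{-s/r}$ versus $w^{-1}$ disappears there.
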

\begin{proof} We only prove \eqref{dualdirectsum 1}, since \eqref{dualdirectsum 2} follows from \eqref{dualdirectsum 1} by assuming $w=1.$ By Proposition \ref{duality of lp}, we know that $\left(\ell_{sch}^{p}(\G)\right)^{\ast}\cong\ell_{sch}^{q}(\G)$. Hence, we prove \eqref{dualdirectsum 1} in the following form
$$\left(\ell_{sch}^{p}(\G)\oplus_{r,w}\ell_{sch}^{p}(\G)\right)^{\ast}\cong\left(\ell_{sch}^{p}(\G)\right)^{\ast}\oplus_{s, \frac{1}{w}}\left(\ell_{sch}^{p}(\G)\right)^{\ast}.$$

Define a map $T$ from $\left(\ell_{sch}^{p}(\G)\right)^{\ast}\oplus_{s, \frac{1}{w}}\left(\ell_{sch}^{p}(\G)\right)^{\ast}$ into $\left(\ell_{sch}^{p}(\G)\oplus_{r,w}\ell_{sch}^{p}(\G)\right)^{\ast}$ of the form
$$T(\varphi_{1},\varphi_{2})(H_{1}, H_{2})=\varphi_{1}(H_{1})+w^{\frac{1}{r}-\frac{1}{s}}\varphi_{2}(H_{2})$$
for $\varphi_{1}, \varphi_{2}\in \left(\ell_{sch}^{p}(\G)\right)^{\ast}$ and $H_{1}, H_{2}\in \ell_{sch}^{p}(\G).$ 

We aim to show that $T$ is an isometric isomorphism, i.e. a norm-preserving surjective mapping. We first show that $T$ is a norm-preserving mapping. We have
\begin{equation}\begin{split}\label{boundedness of T}
\left|T(\varphi_{1},\varphi_{2})(H_{1}, H_{2})\right|&\leq\left|\varphi_{1}(H_{1})\right|+\left|w^{\frac{1}{r}-\frac{1}{s}}\varphi_{2}(H_{2})\right|\\
&\leq\lv\varphi_{1}\rv_{\left(\ell_{sch}^{p}(\G)\right)^{\ast}}\lv H_{1}\rv_{\ell_{sch}^{p}(\G)}+w^{-\frac{1}{s}}\lv\varphi_{2}\rv_{\left(\ell_{sch}^{p}(\G)\right)^{\ast}}w^{\frac{1}{r}}\lv H_{2}\rv_{\ell_{sch}^{p}(\G)}\\
& \leq \left(\lv\varphi_{1}\rv_{\left(\ell_{sch}^{p}(\G)\right)^{\ast}}^{s}+\left(w^{-\frac{1}{s}}\right)^{s}\lv\varphi_{2}\rv_{\left(\ell_{sch}^{p}(\G)\right)^{\ast}}^{s}\right)^{\frac{1}{s}}\cdot\left(\lv H_{1}\rv_{\ell_{sch}^{p}(\G)}^{r}+\left(w^{\frac{1}{r}}\right)^{r}\lv H_{2}\rv_{\ell_{sch}^{p}(\G)}^{r}\right)^{\frac{1}{r}}\\
&=\lv \left(\varphi_{1}, \varphi_{2}\right)\rv_{\left(\ell_{sch}^{p}(\G)\right)^{\ast}\oplus_{s, \frac{1}{w}}\left(\ell_{sch}^{p}(\G)\right)^{\ast}} \lv (H_{1}, H_{2})\rv_{\left(\ell_{sch}^{p}(\G)\oplus_{r,w}\ell_{sch}^{p}(\G)\right)^{\ast}},
\end{split}\end{equation}
where the last inequality follows from the classical H\"{o}lder inequality.
Hence, 
$$\lv T(\varphi_{1}, \varphi_{2})\rv\leq \lv \left(\varphi_{1}, \varphi_{2}\right)\rv_{\left(\ell_{sch}^{p}(\G)\right)^{\ast}\oplus_{s, \frac{1}{w}}\left(\ell_{sch}^{p}(\G)\right)^{\ast}}.$$
Fix $\varepsilon>0$ and $h_{1},h_{2}\in \ell_{sch}^{p}(\G)$ such that 
\begin{equation}
\begin{split}\label{choosing epsilon}
\lv h_{1}\psrv=&\lv h_{2}\psrv=1\\
\lv\varphi_{1}\rv_{\left(\ell_{sch}^{p}(\G)\right)^{\ast}}\leq \left|\varphi_{1}(h_{1})\right|+\varepsilon,&\quad \lv\varphi_{2}\rv_{\left(\ell_{sch}^{p}(\G)\right)^{\ast}}\leq \left|\varphi_{2}(h_{2})\right|+\varepsilon.
\end{split}\end{equation}   

Without loss of generality we may assume that $\varphi_{1}(h_{1})\geq 0$ and $\varphi_{2}(h_{2})\geq 0$, since one can choose $h_{1}$ and $h_{2}$ by multiplying an appropriate complex number with absolute value 1 (if $z$ is the given complex number, then one can multiply it by $\frac{z^{\ast}}{|z|}$ with absolute value equal to 1). Let 
$$f_{1}=\frac{\lv\varphi_{1}\rv_{\left(\ell_{sch}^{p}(\G)\right)^{\ast}}^{s-1}}{\left(\lv\varphi_{1}\rv_{\left(\ell_{sch}^{p}(\G)\right)^{\ast}}^{s}+\frac{1}{w}\lv\varphi_{2}\rv_{\left(\ell_{sch}^{p}(\G)\right)^{\ast}}^{s}\right)^{\frac{1}{r}}}\cdot h_{1},\quad f_{2}=\frac{w^{-\frac{2}{r}}\lv\varphi_{2}\rv_{\left(\ell_{sch}^{p}(\G)\right)^{\ast}}^{s-1}}{\left(\lv\varphi_{1}\rv_{\left(\ell_{sch}^{p}(\G)\right)^{\ast}}^{s}+\frac{1}{w}\lv\varphi_{2}\rv_{\left(\ell_{sch}^{p}(\G)\right)^{\ast}}^{s}\right)^{\frac{1}{r}}}\cdot h_{2}.$$
Obviously, 
$$\lv (f_{1}, f_{2})\rv_{\left(\ell_{sch}^{p}(\G)\oplus_{r,w}\ell_{sch}^{p}(\G)\right)^{\ast}}=1.$$
Moreover, 
\begin{equation*}\begin{split}
T(\varphi_{1}, \varphi_{2})&(f_{1}, f_{2})=\varphi_{1}(f_{1})+w^{\frac{1}{r}-\frac{1}{s}}\varphi_{2}(f_{2})=\\
&=\frac{\lv\varphi_{1}\rv_{\left(\ell_{sch}^{p}(\G)\right)^{\ast}}^{s-1}}{\left(\lv\varphi_{1}\rv_{\left(\ell_{sch}^{p}(\G)\right)^{\ast}}^{s}+\frac{1}{w}\lv\varphi_{2}\rv_{\left(\ell_{sch}^{p}(\G)\right)^{\ast}}^{s}\right)^{\frac{1}{r}}}\cdot\varphi_{1}(h_{1})+\frac{w^{-\frac{2}{r}}w^{\frac{1}{r}-\frac{1}{s}}\lv\varphi_{2}\rv_{\left(\ell_{sch}^{p}(\G)\right)^{\ast}}^{s-1}}{\left(\lv\varphi_{1}\rv_{\left(\ell_{sch}^{p}(\G)\right)^{\ast}}^{s}+\frac{1}{w}\lv\varphi_{2}\rv_{\left(\ell_{sch}^{p}(\G)\right)^{\ast}}^{s}\right)^{\frac{1}{r}}}\cdot \varphi_{2}(h_{2})
\end{split}\end{equation*}
\begin{equation*}\begin{split}
&\geq \frac{\lv\varphi_{1}\rv_{\left(\ell_{sch}^{p}(\G)\right)^{\ast}}^{s-1}}{\left(\lv\varphi_{1}\rv_{\left(\ell_{sch}^{p}(\G)\right)^{\ast}}^{s}+\frac{1}{w}\lv\varphi_{2}\rv_{\left(\ell_{sch}^{p}(\G)\right)^{\ast}}^{s}\right)^{\frac{1}{r}}}\cdot\left(\lv\varphi_{1}\rv_{\left(\ell_{sch}^{p}(\G)\right)^{\ast}}-\varepsilon\right)\\
&\quad\quad\quad+\frac{\frac{1}{w}\lv\varphi_{2}\rv_{\left(\ell_{sch}^{p}(\G)\right)^{\ast}}^{s-1}}{\left(\lv\varphi_{1}\rv_{\left(\ell_{sch}^{p}(\G)\right)^{\ast}}^{s}+\frac{1}{w}\lv\varphi_{2}\rv_{\left(\ell_{sch}^{p}(\G)\right)^{\ast}}^{s}\right)^{\frac{1}{r}}}\cdot \left(\lv\varphi_{2}\rv_{\left(\ell_{sch}^{p}(\G)\right)^{\ast}}-\varepsilon\right)\\
&=\frac{\lv\varphi_{1}\rv_{\left(\ell_{sch}^{p}(\G)\right)^{\ast}}^{s}+\frac{1}{w}\lv\varphi_{2}\rv_{\left(\ell_{sch}^{p}(\G)\right)^{\ast}}^{s}}{\left(\lv\varphi_{1}\rv_{\left(\ell_{sch}^{p}(\G)\right)^{\ast}}^{s}+\frac{1}{w}\lv\varphi_{2}\rv_{\left(\ell_{sch}^{p}(\G)\right)^{\ast}}^{s}\right)^{\frac{1}{r}}}-\varepsilon\cdot \frac{\lv\varphi_{1}\rv_{\left(\ell_{sch}^{p}(\G)\right)^{\ast}}^{s-1}+\frac{1}{w}\lv\varphi_{2}\rv_{\left(\ell_{sch}^{p}(\G)\right)^{\ast}}^{s-1}}{\left(\lv\varphi_{1}\rv_{\left(\ell_{sch}^{p}(\G)\right)^{\ast}}^{s}+\frac{1}{w}\lv\varphi_{2}\rv_{\left(\ell_{sch}^{p}(\G)\right)^{\ast}}^{s}\right)^{\frac{1}{r}}},
\end{split}\end{equation*} 
where 
\begin{equation*}\begin{split}
\frac{\lv\varphi_{1}\rv_{\left(\ell_{sch}^{p}(\G)\right)^{\ast}}^{s}+\frac{1}{w}\lv\varphi_{2}\rv_{\left(\ell_{sch}^{p}(\G)\right)^{\ast}}^{s}}{\left(\lv\varphi_{1}\rv_{\left(\ell_{sch}^{p}(\G)\right)^{\ast}}^{s}+\frac{1}{w}\lv\varphi_{2}\rv_{\left(\ell_{sch}^{p}(\G)\right)^{\ast}}^{s}\right)^{\frac{1}{r}}}
&=\left(\lv\varphi_{1}\rv_{\left(\ell_{sch}^{p}(\G)\right)^{\ast}}^{s}+\frac{1}{w}\lv\varphi_{2}\rv_{\left(\ell_{sch}^{p}(\G)\right)^{\ast}}^{s}\right)^{\frac{1}{s}}\\
&=\lv (\varphi_{1}, \varphi_{2})\rv_{\left(\ell_{sch}^{p}(\G)\right)^{\ast}\oplus_{s, \frac{1}{w}}\left(\ell_{sch}^{p}(\G)\right)^{\ast}}.
\end{split}\end{equation*}
Hence, 
\begin{equation*}
T(\varphi_{1}, \varphi_{2})(f_{1}, f_{2})\geq \lv (\varphi_{1}, \varphi_{2})\rv_{\left(\ell_{sch}^{p}(\G)\right)^{\ast}\oplus_{s, \frac{1}{w}}\left(\ell_{sch}^{p}(\G)\right)^{\ast}}-\varepsilon\cdot \frac{\lv\varphi_{1}\rv_{\left(\ell_{sch}^{p}(\G)\right)^{\ast}}^{s-1}+\frac{1}{w}\lv\varphi_{2}\rv_{\left(\ell_{sch}^{p}(\G)\right)^{\ast}}^{s-1}}{\left(\lv\varphi_{1}\rv_{\left(\ell_{sch}^{p}(\G)\right)^{\ast}}^{s}+\frac{1}{w}\lv\varphi_{2}\rv_{\left(\ell_{sch}^{p}(\G)\right)^{\ast}}^{s}\right)^{\frac{1}{r}}}.
\end{equation*}
Since $\varepsilon>0$, in \eqref{choosing epsilon}, can be chosen arbitrarily small, one has  
$$T(\varphi_{1}, \varphi_{2})(f_{1}, f_{2})\geq \lv (\varphi_{1}, \varphi_{2})\rv_{\left(\ell_{sch}^{p}(\G)\right)^{\ast}\oplus_{s, \frac{1}{w}}\left(\ell_{sch}^{p}(\G)\right)^{\ast}}.$$
The last inequality together with \eqref{boundedness of T} implies that 
$$\lv T(\varphi_{1}, \varphi_{2})\rv= \lv \left(\varphi_{1}, \varphi_{2}\right)\rv_{\left(\ell_{sch}^{p}(\G)\right)^{\ast}\oplus_{s, \frac{1}{w}}\left(\ell_{sch}^{p}(\G)\right)^{\ast}},\quad \forall (\varphi_{1}, \varphi_{2})\in\left(\ell_{sch}^{p}(\G)\right)^{\ast}\oplus_{s, \frac{1}{w}}\left(\ell_{sch}^{p}(\G)\right)^{\ast},$$
which shows that $T$ is a norm-preserving mapping.

We now show that $T$ is a surjective mapping. Let $\Phi\in \left(\ell_{sch}^{p}(\G)\oplus_{r,w}\ell_{sch}^{p}(\G)\right)^{\ast}$ be a bounded linear functional. Then, for $H_{1}, H_{2}\in\ell_{sch}^{p}(\G)$, one has
$$\Phi(H_{1}, H_{2})=\Phi(H_{1},0)+\Phi(0, H_{2})=\Phi\arrowvert_{\ell_{sch}^{p}(\G)\oplus_{r,w}\{0\}}(H_{1}, 0)+w^{\frac{1}{r}-\frac{1}{s}}\Phi\arrowvert_{\{0\}\oplus_{r,w}\ell_{sch}^{p}(\G)}(0, w^{\frac{1}{s}-\frac{1}{r}}H_{2}).$$
Denote by
$$\varphi_{1}(H_{1})=\Phi\arrowvert_{\ell_{sch}^{p}(\G)\oplus_{r,w}\{0\}}(H_{1}, 0),\quad \varphi_{2}(H_{2})=\Phi\arrowvert_{\{0\}\oplus_{r,w}\ell_{sch}^{p}(\G)}(0, w^{\frac{1}{s}-\frac{1}{r}}H_{2}).$$
Hence, $\varphi_{1}\in\left(
\ell_{sch}^{p}(\G)\right)^{\ast}$, $\varphi_{2}\in\left(
\ell_{sch}^{p}(\G)\right)^{\ast}$ and $(\varphi_{1},\varphi_{2})\in \left(\ell_{sch}^{p}(\G)\right)^{\ast}\oplus_{s, \frac{1}{w}}\left(\ell_{sch}^{p}(\G)\right)^{\ast}.$ The linearity of $\varphi_{1}$ and $\varphi_{2}$ follows easily from the linearity of $\Phi$. The boundedness can be shown as follows
\begin{equation*}\begin{split}
\left|\varphi_{1}(H_{1})\right|&=\left|\Phi\arrowvert_{\ell_{sch}^{p}(\G)\oplus_{r,w}\{0\}}(H_{1}, 0)\right|\leq \lv \Phi\arrowvert_{\ell_{sch}^{p}(\G)\oplus_{r,w}\{0\}}\rv_{\left(\ell_{sch}^{p}(\G)\oplus_{r,w}\{0\}\right)^{\ast}}\lv(H_{1}, 0)\rv_{\ell_{sch}^{p}(\G)\oplus_{r,w}\{0\}}\\
&\leq \lv \Phi\rv_{\left(\ell_{sch}^{p}(\G)\oplus_{r,w}\ell_{sch}^{p}(\G)\right)^{\ast}}\lv H_{1}\rv_{\ell_{sch}^{p}(\G)},\quad \forall H_{1}\in \ell_{sch}^{p}(\G),
\end{split}\end{equation*}
and 
\begin{equation*}\begin{split}
\left|\varphi_{2}(H_{2})\right|&=\left|\Phi\arrowvert_{\{0\}\oplus_{r,w}\ell_{sch}^{p}(\G)}(0, w^{\frac{1}{s}-\frac{1}{r}}H_{2})\right|\leq \lv \Phi\arrowvert_{\{0\}\oplus_{r,w}\ell_{sch}^{p}(\G)}\rv_{\left(\{0\}\oplus_{r,w}\ell_{sch}^{p}(\G)\right)^{\ast}}\lv(0, w^{\frac{1}{s}-\frac{1}{r}}H_{2})\rv_{\{0\}\oplus_{r,w}\ell_{sch}^{p}(\G)}\\
&\leq w^{\frac{1}{s}}\lv \Phi\rv_{\left(\ell_{sch}^{p}(\G)\oplus_{r,w}\ell_{sch}^{p}(\G)\right)^{\ast}}\lv H_{2}\rv_{\ell_{sch}^{p}(\G)},\quad \forall H_{2}\in \ell_{sch}^{p}(\G).
\end{split}\end{equation*}
Lastly, 
\begin{equation*}\begin{split}
&\lv (\varphi_{1}, \varphi_{2}) \rv_{\left(\ell_{sch}^{p}(\G)\right)^{\ast}\oplus_{s, \frac{1}{w}}\left(\ell_{sch}^{p}(\G)\right)^{\ast}}=\left(\lv \varphi_{1}\rv_{\left(\ell_{sch}^{p}(\G)\right)^{\ast}}^{s}+\frac{1}{w}\lv \varphi_{2}\rv_{\left(\ell_{sch}^{p}(\G)\right)^{\ast}}^{s}\right)^{\frac{1}{s}}\\
&\leq \left(\lv \Phi\rv_{\left(\ell_{sch}^{p}(\G)\oplus_{r,w}\ell_{sch}^{p}(\G)\right)^{\ast}}^{s}+\frac{1}{w}\left(w^{\frac{1}{s}}\right)^{s}\lv \Phi\rv_{\left(\ell_{sch}^{p}(\G)\oplus_{r,w}\ell_{sch}^{p}(\G)\right)^{\ast}}^{s}\right)^{\frac{1}{s}}=2^{\frac{1}{s}}\lv \Phi\rv_{\left(\ell_{sch}^{p}(\G)\oplus_{r,w}\ell_{sch}^{p}(\G)\right)^{\ast}}<\infty,
\end{split}\end{equation*}
which completes the proof.
\end{proof}

\subsection{Complex interpolation of $\ell_{sch}^{p}(\G)$ spaces.}\label{subsectioninterpolation}

In this subsection present a complex interpolation result for spaces $\ell_{sch}^{p}(\G)$, $1\leq p<\infty$. For more detailed discussion on basics of complex interpolation, we refer the reader to \cite[Chapter 4]{lofstrom1976interpolation} 

\begin{proposition}\label{Schatten interpolation}
Let $1\leq p_{0},p_{1}<\infty$. Then,
$$\left(\ell_{sch}^{p_{0}}(\G), \ell_{sch}^{p_{1}}(\G)\right)_{\theta}=\ell_{sch}^{p}(\G),\quad 0<\theta<1,$$
with equal norms, where 
$$\frac{1}{p}=\frac{1-\theta}{p_{0}}+\frac{\theta}{p_{1}}.$$
\end{proposition}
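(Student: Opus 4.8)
The plan is to prove the complex interpolation identity $\left(\ell_{sch}^{p_0}(\G),\ell_{sch}^{p_1}(\G)\right)_\theta = \ell_{sch}^p(\G)$ by the classical Stein--Weiss/Calder\'on method adapted to this noncommutative weighted setting. The essential observation is that $\ell_{sch}^p(\G)$ is, up to the dimension weights, a vector-valued $\ell^p$-direct sum of the Schatten ideals $\mathcal{S}^p(\mathcal{H}_\xi)$: concretely, writing each $H$ as the family $(H(\xi))_{[\xi]\in\G}$ with the counting-type measure assigning mass $\Dim(\xi)$ to each point, one has $\ell_{sch}^p(\G)=\ell^p\!\left(\G,\Dim;\{\mathcal{S}^p(\mathcal{H}_\xi)\}\right)$. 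Since complex interpolation commutes with such $\ell^p$-sums (with a fixed measure), and since the Schatten scale itself interpolates as $\left(\mathcal{S}^{p_0}(\mathcal{H}_\xi),\mathcal{S}^{p_1}(\mathcal{H}_\xi)\right)_\theta=\mathcal{S}^p(\mathcal{H}_\xi)$ isometrically, the identity should follow by combining these two facts fiber-by-fiber and then across the dual.

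\textbf{First} I would establish the two upper/lower norm inequalities separately, as is standard for $(X_0,X_1)_\theta$. For the inclusion $\ell_{sch}^p(\G)\hookrightarrow(\ell_{sch}^{p_0},\ell_{sch}^{p_1})_\theta$ with norm $\le 1$, take $H\in\ell_{sch}^p(\G)$ with $\lv H\psrv=1$ and build an explicit analytic family $f:z\mapsto f(z)$ in $\mathcal{F}(\ell_{sch}^{p_0},\ell_{sch}^{p_1})$ with $f(\theta)=H$. The correct ansatz is obtained fiberwise from the polar decomposition $H(\xi)=U(\xi)|H(\xi)|$ (used already in Proposition~\ref{duality of lp}): set
$$
f(z)(\xi)=U(\xi)\,|H(\xi)|^{\,p\left(\frac{1-z}{p_0}+\frac{z}{p_1}\right)},\qquad [\xi]\in\G,
$$
which at $z=\theta$ reproduces $H(\xi)$ because $p\left(\tfrac{1-\theta}{p_0}+\tfrac{\theta}{p_1}\right)=1$. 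On the boundary lines $\mathrm{Re}\,z=0$ and $\mathrm{Re}\,z=1$ the exponent has real part $p/p_0$ and $p/p_1$ respectively, so a direct Schatten-norm computation (using that $\lv |A|^s\rv_{\mathcal{S}^{p_j}}=\lv A\rv_{\mathcal{S}^{sp_j}}^{\,s}$) yields $\sup_t\lv f(it)\rv_{\ell_{sch}^{p_0}}^{p_0}\le\lv H\psrv^{p}=1$ and similarly on the right edge. Thus $\lv f\rv_{\mathcal{F}}\le 1$, giving $\lv H\rv_{(\cdot)_\theta}\le\lv H\psrv$. I would handle convergence and analyticity of $f$ first on finitely supported $H$ (where $\G$ is replaced by a finite set and each $\mathcal{H}_\xi$ is finite-dimensional, so analyticity in $z$ is manifest), then pass to the general case by the density of such $H$.

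\textbf{For the reverse} inequality $\lv H\psrv\le\lv H\rv_{(\cdot)_\theta}$, the cleanest route is duality. By Proposition~\ref{duality of lp} we have $\left(\ell_{sch}^{p}(\G)\right)^{\ast}\cong\ell_{sch}^{q}(\G)$ isometrically with the pairing $\langle\cdot,\cdot\rangle_\G$, and all spaces in sight are reflexive by Proposition~\ref{reflexivity}, so the duality theorem for complex interpolation of reflexive couples gives $\left((\ell_{sch}^{p_0},\ell_{sch}^{p_1})_\theta\right)^{\ast}=(\ell_{sch}^{q_0},\ell_{sch}^{q_1})_\theta$ where $1/q_j=1-1/p_j$. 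Running the forward inequality already proved (applied to the conjugate exponents) bounds the dual norm, and combining the two directions pins the norm exactly; equivalently, one tests $H$ against the same optimizing $F$ constructed in the proof of Proposition~\ref{duality of lp} to recover $\lv H\psrv$ from below.

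\textbf{The main obstacle} I anticipate is purely technical rather than conceptual: verifying that the boundary functions $t\mapsto f(j+it)$ genuinely satisfy the admissibility conditions of $\mathcal{F}$ — continuity into $\ell_{sch}^{p_j}(\G)$ and decay as $|t|\to\infty$ — when $\G$ is infinite, since the operator exponential $|H(\xi)|^{\,i t(\cdots)}$ is only a phase that does not decay. The standard fix is to insert a regularizing factor $e^{\epsilon(z^2-\theta^2)}$ and work with finitely supported $H$, then let $\epsilon\to 0$ and use density; I would carry out the argument on the dense subspace of finitely supported families and invoke the fact that for such families each fiber $\mathcal{S}^{p}(\mathcal{H}_\xi)$ is finite-dimensional, making the analytic-family hypotheses automatic, before passing to the limit in the interpolation norm.
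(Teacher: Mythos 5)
Your proposal is correct, and its first half coincides with the paper's argument: the analytic family you write fiberwise via the polar decomposition, $f(z)(\xi)=U(\xi)\lvert H(\xi)\rvert^{p(\frac{1-z}{p_{0}}+\frac{z}{p_{1}})}$, is exactly the paper's $f(z)(\xi)=\lvert H^{\ast}(\xi)\rvert^{\frac{p}{p(z)}-1}H(\xi)$ in disguise (since $\lvert H^{\ast}\rvert^{s}H=U\lvert H\rvert^{s+1}$), and the boundary computation via $\lv \lvert A\rvert^{s}\rv_{\mathcal{S}^{p_{j}}}=\lv A\rv_{\mathcal{S}^{sp_{j}}}^{s}$ is the same as the paper's trace computation; if anything you are more careful than the paper about the admissibility (decay in $t$) of the boundary functions. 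Where you genuinely diverge is the reverse inequality. The paper does \emph{not} invoke the abstract duality theorem for $(X_{0},X_{1})_{\theta}$: it takes an almost-optimal $f$ for $H$, builds a second analytic family $g$ of the same form for a norming element $F\in\ell_{sch}^{q}(\G)$ (using Proposition \ref{duality of lp}\ref{duality statement 1}), and applies the maximum principle on the strip to the scalar function $h(z)=\langle f(z),g(z)\rangle_{\G}$, bounding it on the two boundary lines by H\"older. This is self-contained and works uniformly for $1\leq p_{0},p_{1}<\infty$, including when a conjugate index is $\infty$. Your route through $\bigl((\ell_{sch}^{p_{0}},\ell_{sch}^{p_{1}})_{\theta}\bigr)^{\ast}=(\ell_{sch}^{q_{0}},\ell_{sch}^{q_{1}})_{\theta}$ is shorter on paper but imports more machinery and has two endpoint obligations you should make explicit: the duality theorem needs $X_{0}\cap X_{1}$ dense in both spaces (fine for $p_{0},p_{1}<\infty$, but it is the reason one must dualize the $p$-couple rather than the $q$-couple, since finitely supported elements are not dense in $\ell_{sch}^{\infty}(\G)$), and it produces the \emph{upper} interpolation functor, which you identify with the lower one via reflexivity of at least one of $\ell_{sch}^{q_{0}},\ell_{sch}^{q_{1}}$ — this fails only in the trivial case $p_{0}=p_{1}=1$. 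Moreover, when some $p_{j}=1$ your ``forward inequality for the conjugate exponents'' concerns a couple containing $\ell_{sch}^{\infty}(\G)$, which lies outside the hypotheses of the proposition as stated, so that embedding has to be verified separately (it does hold, by the same analytic family, with an operator-norm bound on the $\infty$-edge). With those caveats addressed, both proofs are valid; the paper's trades abstraction for a hands-on three-lines argument.
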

\begin{proof} Firstly, let us recall that for a given finite-rank operator $K$ on a Hilbert space $H$, we understand $K^{z}, \mathrm{Re}z\geq 0$ as follows 
$$K^{z}=\sum_{j=1}^{N}\lambda_{j}^{z}\langle\cdot, \phi_{j}\rangle\phi_{j},\quad \lambda^{z}=e^{z \ln(\lambda)},$$
where $K=\sum_{j=1}^{N}\lambda_{j}\langle\cdot, \phi_{j}\rangle\phi_{j}$ is the spectral representation of $K.$

Note that it is sufficient to check that $\left(\ell_{sch}^{p_{0}}(\G), \ell_{sch}^{p_{1}}(\G)\right)_{\theta}$ can be identified with $\ell_{sch}^{p}(\G)$. First suppose that $H\in\ell_{sch}^{p}(\G)$ and without loss of generality assume that $\lv H\psrv=1.$ Define a function $f\in\mathcal{F}:=\mathcal{F}(\ell_{sch}^{p_{0}}(\G), \ell_{sch}^{p_{1}}(\G))$ from the strip $\{z:0\leq \mathrm{Re}z\leq 1\}$ into $\ell_{sch}^{p_{0}}(\G)+\ell_{sch}^{p_{1}}(\G)$ by the setting
$$f(z)(\xi)=\left|H^{\ast}(\xi)\right|^{\frac{p}{p(z)}-1}H(\xi),\quad \forall[\xi]\in\G,$$
where $$\frac{p}{p(z)}=\frac{p(1-z)}{p_{0}}+\frac{pz}{p_{1}},$$ with convention that $\frac{0}{0}:=0.$ It is obvious that $p(\theta)=p$ and $f(\theta)(\xi)=H(\xi),$ $\forall [\xi]\in\G,$ i.e. $f(\theta)=H.$ Moreover, 
$$\lv f\rv_{\mathcal{F}}:=\max\left\{\sup_{t\in\mathbb{R}} \lv f(it)\rv_{\ell_{sch}^{p_{0}}(\G)}, \sup_{t\in\mathbb{R}} \lv f(1+it)\rv_{\ell_{sch}^{p_{1}}(\G)}\right\}=1.$$
Indeed,
\begin{equation*}\begin{split}
&\sup_{t\in\mathbb{R}}\lv f(it)\rv_{\ell_{sch}^{p_{0}}(\G)}=\sup_{t\in\mathbb{R}}\left(\sum_{[\xi]\in\G}\dim(\xi)\lv |H^{\ast}(\xi)|^{\frac{p}{p(it)}-1}H(\xi)\rv_{\mathcal{S}^{p_{0}}}^{p_{0}}\right)^{\frac{1}{p_{0}}}\\
&=\sup_{t\in\mathbb{R}}\left(\sum_{[\xi]\in\G}\dim(\xi)\mathrm{Tr} \left| |H^{\ast}(\xi)|^{\frac{p}{p(it)}-1}H(\xi)\right|^{p_{0}}\right)^{\frac{1}{p_{0}}}\\
&=\sup_{t\in\mathbb{R}}\left(\sum_{[\xi]\in\G}\dim(\xi)\mathrm{Tr} \left(H^{\ast}(\xi) \left(|H^{\ast}(\xi)|^{\frac{p}{p(it)}-1}\right)^{\ast} |H^{\ast}(\xi)|^{\frac{p}{p(it)}-1}H(\xi)\right)^{\frac{p_{0}}{2}}\right)^{\frac{1}{p_{0}}}\\
&=\sup_{t\in\mathbb{R}}\left(\sum_{[\xi]\in\G}\dim(\xi)\mathrm{Tr} \left(H^{\ast}(\xi) \left| |H^{\ast}(\xi)|^{\frac{p}{p(it)}-1}\right|^2 H(\xi)\right)^{\frac{p_{0}}{2}}\right)^{\frac{1}{p_{0}}}\\
&=\sup_{t\in\mathbb{R}}\left(\sum_{[\xi]\in\G}\dim(\xi)\mathrm{Tr} \left(H^{\ast}(\xi) |H^{\ast}(\xi)|^{2\mathrm{Re}\left(\frac{p}{p(it)}-1\right)} H(\xi)\right)^{\frac{p_{0}}{2}}\right)^{\frac{1}{p_{0}}}\\
&=\left(\sum_{[\xi]\in\G}\dim(\xi)\mathrm{Tr} \left(H^{\ast}(\xi) |H^{\ast}(\xi)|^{2(\frac{p}{p_{0}}-1)} H(\xi)\right)^{\frac{p_{0}}{2}}\right)^{\frac{1}{p_{0}}}=\left(\sum_{[\xi]\in\G}\dim(\xi)\mathrm{Tr} \left(|H^{\ast}(\xi)|^{2(\frac{p}{p_{0}}-1)} H(\xi) H^{\ast}(\xi)\right)^{\frac{p_{0}}{2}}\right)^{\frac{1}{p_{0}}}\\
&=\left(\sum_{[\xi]\in\G}\dim(\xi)\mathrm{Tr} \left(|H^{\ast}(\xi)|^{2(\frac{p}{p_{0}}-1)} |H^{\ast}(\xi)|^2|\right)^{\frac{p_{0}}{2}}\right)^{\frac{1}{p_{0}}}=\left(\sum_{[\xi]\in\G}\dim(\xi)\mathrm{Tr} \left(|H^{\ast}(\xi)|^{p}\right)\right)^{\frac{1}{p_{0}}}\\
&=\left(\sum_{[\xi]\in\G}\dim(\xi)\lv H^{\ast}(\xi)\rv_{\mathcal{S}^{p}(\mathcal{H}_{\xi})}^{p}\right)^{\frac{1}{p_{0}}}=\left(\sum_{[\xi]\in\G}\dim(\xi)\lv H(\xi)\rv_{\mathcal{S}^{p}(\mathcal{H}_{\xi})}^{p}\right)^{\frac{1}{p_{0}}}=\lv H\rv_{\ell_{sch}^{p}(\G)}^{\frac{p}{p_{0}}}=1.
\end{split}\end{equation*}
Here to obtain the seventh equality, we used \enquote{Fact A} in the proof of Proposition \ref{duality of lp} \ref{duality statement 2} that especially holds for $f(t)=t^{\frac{p_{0}}{2}}.$

Similarly, one can show that
$$\sup_{t\in\mathbb{R}}\lv f(1+it)\rv_{\ell_{sch}^{p_{1}}(\G)}=1.$$ Therefore, for a given $H\in\ell_{sch}^{p}(\G)$ there exists $f\in \mathcal{F}$ such that $f(\theta)=H$ and $\lv f\rv_{\mathcal{F}}=1$, which implies by the definition that $H\in\left(\ell_{sch}^{p_{0}}(\G), \ell_{sch}^{p_{1}}(\G)\right)_{\theta}$ and $\lv H\rv_{\left(\ell_{sch}^{p_{0}}(\G), \ell_{sch}^{p_{1}}(\G)\right)_{\theta}}\leq 1=\lv H\psrv.$

Now suppose that $H\in\left(\ell_{sch}^{p_{0}}(\G), \ell_{sch}^{p_{1}}(\G)\right)_{\theta}$ and without loss of generality assume that $\lv H\rv_{\left(\ell_{sch}^{p_{0}}(\G), \ell_{sch}^{p_{1}}(\G)\right)_{\theta}}=1.$  Let $\varepsilon>0$ be given. Then choose $f\in\mathcal{F}$ such that $\lv f\rv_{\mathcal{F}}<1+\varepsilon$ and $f(\theta)=H.$ In order to prove that $\lv H\psrv\leq 1$, by Proposition \ref{duality of lp} \ref{duality statement 1}, it is enough to show that $|\langle H, F\rangle_{\G}|\leq 1$ for any $F\in\ell_{sch}^{q}(\G)$ with $\lv F\rv_{\ell_{sch}^{q}(\G)}=1,$ where $q$ is the conjugate index of $p.$

Consider $F\in\ell_{sch}^{q}(\G)$ with $\lv F\rv_{\ell_{sch}^{q}(\G)}=1$ and define a function $g$ on a strip $\{z: 0\leq \mathrm{Re} z\leq 1\}$ into $\ell_{sch}^{q_{0}}(\G)+\ell_{sch}^{q_{1}}(\G)$ in the following form
$$g(z)(\xi)=\left|F^{\ast}(\xi)\right|^{\frac{q}{q(z)}-1}F(\xi),\quad \forall[\xi]\in\G,$$
where $q_{0}, q_{1}$ are conjugate indices of $p_{0}, p_{1}$, respectively, and 
$$\frac{q}{q(z)}=\frac{q(1-z)}{q_{0}}+\frac{qz}{q_{1}},$$ with convention that $\frac{0}{0}:=0.$
We obviously have that $g(\theta)=F$ and the fact that $\lv F\rv_{\ell_{sch}^{q}(\G)}=1$ together with similar calculations as in the first part of the proof imply that 
\begin{equation}\begin{split}
\lv g(it)\rv_{\ell_{sch}^{q_{0}}(\G)}=\lv g(1+it)\rv_{\ell_{sch}^{q_{1}}(\G)}=1,\quad t\in\mathbb{R}.   
\end{split}\end{equation} 
Hence, $\lv g\rv_{\mathcal{F}}=1.$

Let $$h(z)=\langle f(z), g(z)\rangle_{\G},$$ (see Proposition \ref{duality of lp} for the definition of the given bracket) which is a function from the strip $\{z: 0\leq \mathrm{Re}z\leq 1\}$ into $\mathbb{R}.$ Then, by H\"older's inequality in Schatten-von Neumann ideals, one has 
\begin{equation}\begin{split}\label{complex eq 1}
|h(it)|&=\left|\langle f(it), g(it)\rangle_{\G}\right|=\left|\sum_{[\xi]\in\G}\dim(\xi)\mathrm{Tr}\left(f(it)(\xi)g(it)(\xi)\right)\right|\leq \sum_{[\xi]\in\G}\dim(\xi)\mathrm{Tr}\left|f(it)(\xi)g(it)(\xi)\right|\\
&\leq \sum_{[\xi]\in\G}\dim(\xi)^{\frac{1}{p_{0}}}\lv f(it)(\xi)\rv_{\mathcal{S}^{p_{0}}(\mathcal{H}_{\xi})} \dim(\xi)^{\frac{1}{q_{0}}}\lv g(it)(\xi)\rv_{\mathcal{S}^{q_{0}}(\mathcal{H}_{\xi})}\\
&\leq \left(\sum_{[\xi]\in\G}\dim(\xi)\lv f(it)(\xi)\rv_{\mathcal{S}^{p_{0}}(\mathcal{H}_{\xi})}^{p_{0}}\right)^{\frac{1}{p_{0}}}\left(\sum_{[\xi]\in\G}\dim(\xi)\lv g(it)(\xi)\rv_{\mathcal{S}^{q_{0}}(\mathcal{H}_{\xi})}^{q_{0}}\right)^{\frac{1}{q_{0}}}\\
&\leq \lv f(it)\rv_{\ell_{sch}^{p_{0}}(\G)}\lv g(it)\rv_{\ell_{sch}^{q_{0}}(\G)}\leq \lv f\rv_{\mathcal{F}}\lv g\rv_{\mathcal{F}}<1+\varepsilon, \quad t\in\mathbb{R}.
\end{split}\end{equation}
Similarly, we have
\begin{equation}\label{complex eq 2}
    |h(1+it)|< 1+\varepsilon, \quad t\in\mathbb{R}.
\end{equation}
Therefore, \eqref{complex eq 1} and \eqref{complex eq 2} together with the principle of the maximum for Banach space valued functions (see, for example, \cite[Section III.14, p. 230]{dunford1988linear} or \cite[Section 3.13, p. 100]{hille1996functional}) imply that 
$$|h(\theta+it)|=\left|\langle f(\theta+it),g(\theta+it)\rangle_{\G}\right|< 1+\varepsilon,\quad t\in\mathbb{R}.$$
In particular,
$$h(\theta)=\left|\langle f(\theta),g(\theta)\rangle_{\G}\right|=\left|\langle H,F\rangle_{\G}\right|< 1+\varepsilon.$$
Since $F$ is arbitrary, it follows that $\lv H\rv_{\ell_{sch}^{p}(\G)}< 1+\varepsilon=\lv H\rv_{\left(\ell_{sch}^{p_{0}}(\G), \ell_{sch}^{p_{1}}(\G)\right)_{\theta}}+\varepsilon.$ Since $\varepsilon>0$ is arbitrary, we have $\lv H\rv_{\ell_{sch}^{p}(\G)}\leq \lv H\rv_{\left(\ell_{sch}^{p_{0}}(\G), \ell_{sch}^{p_{1}}(\G)\right)_{\theta}},$ completing the proof.
\end{proof}

\subsection{Clarkson's inequalities and Kadec-Klee property}
We now prove the following Clarkson inequalities in $\ell_{sch}^{p}(\widehat{G})$ using the simple interpolation method which was first developed by Boas \cite{boas1940some} and M. Klaus \cite[p. 15]{simon2005trace}. These inequalities can further be used to show that the spaces $\ell_{sch}^{p}(\G),$ $1<p<\infty,$ satisfy the Kadec-Klee property.

\begin{proposition}\label{new Clarkson type inequality}
Let $1<p,q<\infty$ with $\frac{1}{p}+\frac{1}{q}=1$. Then, for any $H_{1}, H_{2}\in \ell_{sch}^{p}(\widehat{G})$, one has the following inequalities:
\begin{enumerate}[label=(\roman*)]
    \item \label{new first inequality} If $1<p\leq 2$, then 
    \begin{equation*}\begin{split}
    \Biggl(\lv \frac{H_{1}+H_{2}}{2} \psrv^{q}+\lv \frac{H_{1}-H_{2}}{2} \psrv^{q}\Biggr)^{1/q}\leq \left(\frac{1}{2}\left(\lv H_{1}\psrv^{p}+\lv H_{2}\psrv^{p}\right)\right)^{1/p};
    \end{split}\end{equation*}
    \item \label{new second inequality} If $2\leq p< \infty$, then \begin{equation*}\begin{split}
    \Biggl(\lv \frac{H_{1}+H_{2}}{2} \psrv^{p}+\lv \frac{H_{1}-H_{2}}{2} \psrv^{p}\Biggr)^{1/p}\leq \left(\frac{1}{2}\left(\lv H_{1}\psrv^{q}+\lv H_{2}\psrv^{q}\right)\right)^{1/q}.
    \end{split}\end{equation*}
\end{enumerate}
\end{proposition}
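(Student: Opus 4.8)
The plan is to realize both inequalities as a single operator-norm estimate and to derive them by complex interpolation, following the Boas--Klaus method \cite{boas1940some}. The key object is the linear map
$$T(H_{1}, H_{2}) = \left(\frac{H_{1}+H_{2}}{2}, \frac{H_{1}-H_{2}}{2}\right)$$
acting on pairs of elements of $\ell_{sch}^{p}(\G)$. The observation that drives everything is that both claimed inequalities are exactly boundedness statements for $T$ between suitable direct-sum spaces: inequality \ref{new first inequality} reads
$$\lv T(H_{1},H_{2})\rv_{\ell_{sch}^{p}(\G)\oplus_{q}\ell_{sch}^{p}(\G)}\le 2^{-1/p}\lv (H_{1},H_{2})\rv_{\ell_{sch}^{p}(\G)\oplus_{p}\ell_{sch}^{p}(\G)},$$
while inequality \ref{new second inequality} is the analogous bound with the direct-sum exponents $p$ and $q$ on the two sides interchanged and constant $2^{-1/q}$.

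First I would record three endpoint estimates for $T$. At $p=2$ the spaces $\ell_{sch}^{2}(\G)\oplus_{2}\ell_{sch}^{2}(\G)$ are Hilbert spaces, so the parallelogram identity \eqref{parallelogram identity} gives $\lv T(H_{1},H_{2})\rv = 2^{-1/2}\lv (H_{1},H_{2})\rv$ for the $\oplus_{2}$ norms on both sides. At $p=1$, the triangle inequality in $\ell_{sch}^{1}(\G)$ shows that $T$ maps $\ell_{sch}^{1}(\G)\oplus_{1}\ell_{sch}^{1}(\G)$ into $\ell_{sch}^{1}(\G)\oplus_{\infty}\ell_{sch}^{1}(\G)$ with norm at most $1/2$; likewise, at $p=\infty$ the triangle inequality in $\ell_{sch}^{\infty}(\G)$ gives that $T$ maps $\ell_{sch}^{\infty}(\G)\oplus_{1}\ell_{sch}^{\infty}(\G)$ into $\ell_{sch}^{\infty}(\G)\oplus_{\infty}\ell_{sch}^{\infty}(\G)$ with norm at most $1/2$.

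Next I would interpolate. For inequality \ref{new first inequality} I take $\theta\in(0,1]$ with $\frac{1}{p}=1-\frac{\theta}{2}$ (equivalently $\frac{\theta}{2}=\frac{1}{q}$) and use Proposition \ref{Schatten interpolation} together with Calder\'on's direct-sum interpolation (Proposition \ref{calderon interpolation 1}) to identify the $\theta$-interpolant of the endpoint domains $\ell_{sch}^{1}(\G)\oplus_{1}\ell_{sch}^{1}(\G)$ and $\ell_{sch}^{2}(\G)\oplus_{2}\ell_{sch}^{2}(\G)$ as $\ell_{sch}^{p}(\G)\oplus_{p}\ell_{sch}^{p}(\G)$, and the $\theta$-interpolant of the endpoint codomains $\ell_{sch}^{1}(\G)\oplus_{\infty}\ell_{sch}^{1}(\G)$ and $\ell_{sch}^{2}(\G)\oplus_{2}\ell_{sch}^{2}(\G)$ as $\ell_{sch}^{p}(\G)\oplus_{q}\ell_{sch}^{p}(\G)$; the reflexivity hypothesis of Proposition \ref{calderon interpolation 1} is met because the $p=2$ endpoint is a Hilbert space. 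The complex interpolation theorem for linear operators (see \cite{calderon1964intermediate, lofstrom1976interpolation}) then yields $\lv T\rv\le (1/2)^{1-\theta}(2^{-1/2})^{\theta}=2^{-1/p}$, which is precisely \ref{new first inequality}. For inequality \ref{new second inequality} I repeat the argument interpolating between $p=2$ and $p=\infty$, with $\theta$ determined by $\frac{1}{p}=\frac{1-\theta}{2}$; matching the direct-sum exponents then produces domain $\ell_{sch}^{p}(\G)\oplus_{q}\ell_{sch}^{p}(\G)$ and codomain $\ell_{sch}^{p}(\G)\oplus_{p}\ell_{sch}^{p}(\G)$, and the interpolated constant works out to $(2^{-1/2})^{1-\theta}(1/2)^{\theta}=2^{-1/q}$.

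The main obstacle is bookkeeping rather than any deep difficulty: one must verify that the two direct-sum exponents produced by Calder\'on's theorem come out exactly to $p$ on one side and to $q$ on the other, which is where the conjugacy relation $\frac{1}{p}+\frac{1}{q}=1$ is used, and that the interpolated operator norms collapse to the sharp constants $2^{-1/p}$ and $2^{-1/q}$. Some care is also needed to confirm the reflexivity hypothesis of Proposition \ref{calderon interpolation 1} in each application, which is always guaranteed by the Hilbert-space endpoint at $p=2$, so that the direct-sum interpolation identities apply.
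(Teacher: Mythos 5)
For part \ref{new first inequality} your argument is essentially the paper's: you realize the inequality as the bound $\lv T\rv\leq 2^{-1/p}$ for $T(H_1,H_2)=\bigl(\frac{H_1+H_2}{2},\frac{H_1-H_2}{2}\bigr)$, establish the endpoint bounds $2^{-1}$ at $p=1$ (into $\oplus_\infty$) and $2^{-1/2}$ at $p=2$, and interpolate using Proposition \ref{Schatten interpolation} and Proposition \ref{calderon interpolation 1}; the exponent bookkeeping and the constant $(2^{-1})^{1-\theta}(2^{-1/2})^{\theta}=2^{-1/p}$ are exactly as in the paper.

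For part \ref{new second inequality}, however, your route has a genuine gap. You propose to interpolate between the endpoints $p=2$ and $p=\infty$, which requires the identity $\left(\ell_{sch}^{2}(\G),\ell_{sch}^{\infty}(\G)\right)_{\theta}=\ell_{sch}^{p}(\G)$ (and its direct-sum version). But Proposition \ref{Schatten interpolation} is stated and proved only for $1\leq p_{0},p_{1}<\infty$, and its proof does not extend verbatim to an $\infty$ endpoint: the verification of $\lv f\rv_{\mathcal{F}}=1$ and, more seriously, the converse inclusion rely on the duality $\bigl(\ell_{sch}^{p_j}(\G)\bigr)^{\ast}\cong\ell_{sch}^{q_j}(\G)$ from Proposition \ref{duality of lp}, which is only available for $1\leq p_j<\infty$; with $p_1=\infty$ one would need $q_1=1$ to be the conjugate of $\infty$ in a duality sense that the paper does not establish (and which in general fails, since $\ell_{sch}^{1}$ is the predual, not the dual, of $\ell_{sch}^{\infty}$). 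So the interpolation identity you invoke at the $\infty$ endpoint is unproven in this framework, and your otherwise correct numerology ($\frac{1}{p}=\frac{1-\theta}{2}$, domain exponent $q$, codomain exponent $p$, constant $2^{-1/q}$) rests on it. The paper sidesteps this entirely: it proves \ref{new second inequality} from \ref{new first inequality} by duality, showing that the Banach-space adjoint $S^{\ast}$ of the relevant map acts, via Proposition \ref{duality of the direct sum}, between $\ell_{sch}^{q}(\G)\oplus_{q}\ell_{sch}^{q}(\G)$ and $\ell_{sch}^{q}(\G)\oplus_{p}\ell_{sch}^{q}(\G)$ with $1<q\leq 2$, where part \ref{new first inequality} applies, and then uses $\lv S\rv=\lv S^{\ast}\rv$. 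To repair your argument you would either have to prove the $\infty$-endpoint interpolation identity separately or adopt this duality step.
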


\begin{proof} 
\ref{new first inequality}. We first define the map $T$ from $\ell_{sch}^{p}(\G)\oplus_{p}\ell_{sch}^{p}(\G)$ into $\ell_{sch}^{p}(\G)\oplus_{q}\ell_{sch}^{p}(\G)$ by setting

$$T(H_{1},H_{2})(\xi,\xi):=\left(\frac{H_{1}(\xi)+H_{2}(\xi)}{2},\frac{H_{1}(\xi)-H_{2}(\xi)}{2}\right).$$
Note that in order to prove \ref{new first inequality}, we have to show that 
$$\lv T\rv_{\ell_{sch}^{p}(\G)\oplus_{p}\ell_{sch}^{p}(\G)\rightarrow\ell_{sch}^{p}(\G)\oplus_{q}\ell_{sch}^{p}(\G)}\leq 2^{-1/p}.$$

By Proposition \ref{Schatten interpolation}, one has 
\begin{equation}\label{sch int}
\ell_{sch}^{p}(\G)=\left(\ell_{sch}^{1}(\G), \ell_{sch}^{2}(\G)\right)_{\theta}, \quad \frac{1}{p}=\frac{1-\theta}{1}+\frac{\theta}{2}.
\end{equation}
Hence, by Proposition \ref{calderon interpolation 1} and \eqref{sch int}, we have 
\begin{equation}\begin{split}\label{intereq1}
\ell_{sch}^{p}(\G)\oplus_{p}\ell_{sch}^{p}(\G)&=\left(\ell_{sch}^{1}(\G), \ell_{sch}^{2}(\G)\right)_{\theta}\oplus_{p}\left(\ell_{sch}^{1}(\G), \ell_{sch}^{2}(\G)\right)_{\theta}\\
&=\left(\ell_{sch}^{1}(\G)\oplus_{1}\ell_{sch}^{1}(\G), \ell_{sch}^{2}(\G)\oplus_{2}\ell_{sch}^{2}(\G)\right)_{\theta}. 
\end{split}\end{equation}
Note that 
$$\frac{1}{q}=1-\frac{1}{p}=1-\frac{1-\theta}{1}-\frac{\theta}{2}=\frac{1-\theta}{\infty}+\frac{\theta}{2}.$$
Hence, again by Proposition \ref{calderon interpolation 1} and \eqref{sch int}, we have 
\begin{equation}\begin{split}\label{intereq2}
\ell_{sch}^{p}(\G)\oplus_{q}\ell_{sch}^{p}(\G)&=\left(\ell_{sch}^{1}(\G), \ell_{sch}^{2}(\G)\right)_{\theta}\oplus_{q}\left(\ell_{sch}^{1}(\G), \ell_{sch}^{2}(\G)\right)_{\theta}\\
&=\left(\ell_{sch}^{1}(\G)\oplus_{\infty}\ell_{sch}^{1}(\G), \ell_{sch}^{2}(\G)\oplus_{2}\ell_{sch}^{2}(\G)\right)_{\theta}. 
\end{split}\end{equation}

Consider the operators $T_{1}$ from $\ell_{sch}^{1}(\G)\oplus_{1}\ell_{sch}^{1}(\G)$ into $\ell_{sch}^{1}(\G)\oplus_{\infty}\ell_{sch}^{1}(\G)$ and $T_{2}$ from $\ell_{sch}^{2}(\G)\oplus_{2}\ell_{sch}^{2}(\G)$ into $\ell_{sch}^{2}(\G)\oplus_{2}\ell_{sch}^{2}(\G)$, which are defined by the same action as an operator $T$ above in their respective domains. 

Then, one has
\begin{equation*}\begin{split}
\| T_{1}(H_{1},H_{2})&\|_{\ell_{sch}^{1}(\G)\oplus_{\infty}\ell_{sch}^{1}(\G)}=\max\left\{ \lv \frac{H_{1}(\xi)+H_{2}(\xi)}{2}\rv_{\ell_{sch}^{1}(\G)}, \lv \frac{H_{1}(\xi)-H_{2}(\xi)}{2}\rv_{\ell_{sch}^{1}(\G)}\right\}\\
&\leq \frac{1}{2}\left(\lv H_{1}(\xi)\rv_{\ell_{sch}^{1}(\G)}+ \lv H_{2}(\xi)\rv_{\ell_{sch}^{1}(\G)}\right)=2^{-1}\lv (H_{1},H_{2})\rv_{\ell_{sch}^{1}(\G)\oplus_{1}\ell_{sch}^{1}(\G)}.
\end{split}\end{equation*} Hence, 
\begin{equation}\label{t1}
\lv T_{1}\rv_{\ell_{sch}^{1}(\G)\oplus_{1}\ell_{sch}^{1}(\G)\rightarrow\ell_{sch}^{1}(\G)\oplus_{\infty}\ell_{sch}^{1}(\G)}\leq 2^{-1}.
\end{equation}

On the other hand, we have  
\begin{equation*}\begin{split}
\| T_2(H_{1},H_{2})&\|_{\ell_{sch}^{2}(\G)\oplus_{2}\ell_{sch}^{2}(\G)}=\left(\lv \frac{H_{1}(\xi)+H_{2}(\xi)}{2}\rv_{\ell_{sch}^{2}(\G)}^{2}+\lv \frac{H_{1}(\xi)-H_{2}(\xi)}{2}\rv_{\ell_{sch}^{2}(\G)}^{2}\right)^{1/2}\\
&=2^{-1/2}\left(\lv H_{1}(\xi)\rv_{\ell_{sch}^{2}(\G)}^{2}+\lv H_{2}(\xi)\rv_{\ell_{sch}^{2}(\G)}^{2}\right)^{1/2}=2^{-1/2}\lv (H_{1},H_{2})\rv_{\ell_{sch}^{2}(\G)\oplus_{2}\ell_{sch}^{2}(\G)}.
\end{split}\end{equation*} Hence, 
\begin{equation}\label{t2}
\lv T_{2}\rv_{\ell_{sch}^{2}(\G)\oplus_{2}\ell_{sch}^{2}(\G)\rightarrow\ell_{sch}^{2}(\G)\oplus_{2}\ell_{sch}^{2}(\G)}\leq 2^{-1/2}.
\end{equation}

By \eqref{intereq1} and \eqref{intereq2}, note again that $\ell_{sch}^{p}(\G)\oplus_{p}\ell_{sch}^{p}(\G)$ and $\ell_{sch}^{p}(\G)\oplus_{q}\ell_{sch}^{p}(\G)$ are pair of interpolation spaces of exponent $\theta$ for $\ell_{sch}^{1}(\G)\oplus_{1}\ell_{sch}^{1}(\G)$ and $\ell_{sch}^{2}(\G)\oplus_{2}\ell_{sch}^{2}(\G)$, respectively, $\ell_{sch}^{1}(\G)\oplus_{\infty}\ell_{sch}^{1}(\G)$ and $\ell_{sch}^{2}(\G)\oplus_{2}\ell_{sch}^{2}(\G)$. Therefore, by \cite[Corollary 5.5.4]{lofstrom1976interpolation} and the inequalities \eqref{t1}, \eqref{t2}, it follows that $T$ is bounded from $\ell_{sch}^{p}(\G)\oplus_{p}\ell_{sch}^{p}(\G)$ into $\ell_{sch}^{p}(\G)\oplus_{q} \ell_{sch}^{p}(\G)$ for every $1<p<2$ with norm
$$\lv T\rv_{\ell_{sch}^{p}(\G)\oplus_{p}\ell_{sch}^{p}(\G)\rightarrow\ell_{sch}^{p}(\G)\oplus_{q}\ell_{sch}^{p}(\G)}\leq \lv T_{1}\rv_{\ell_{sch}^{1}(\G)\oplus_{1}\ell_{sch}^{1}(\G)\rightarrow\ell_{sch}^{1}(\G)\oplus_{\infty}\ell_{sch}^{1}(\G)}^{1-\theta}\lv T_{2}\rv_{\ell_{sch}^{2}(\G)\oplus_{2}\ell_{sch}^{2}(\G)\rightarrow\ell_{sch}^{2}(\G)\oplus_{2}\ell_{sch}^{2}(\G)}^{\theta}$$
$$\leq \left(2^{-1}\right)^{1-\theta} \left(2^{-1/2}\right)^\theta=2^{-1+\theta-\theta/2}=2^{-1+\theta/2}=2^{-1/p},$$
which ends the proof of \ref{new first inequality}.

\ref{new second inequality}. Define the map $S$ from $\ell_{sch}^{p}(\G)\oplus_{q}\ell_{sch}^{p}(\G)$ into $\ell_{sch}^{p}(\G)\oplus_{p}\ell_{sch}^{p}(\G)$ by setting
$$S(H_{1},H_{2})(\xi,\xi):=\left(\frac{H_{1}(\xi)+H_{2}(\xi)}{2},\frac{H_{1}(\xi)-H_{2}(\xi)}{2}\right).$$ 
Note that it is enough to show that 
\begin{equation}\label{enough to prove 1}
\lv S\rv_{\ell_{sch}^{p}(\G)\oplus_{q}\ell_{sch}^{p}(\G)\rightarrow \ell_{sch}^{p}(\G)\oplus_{p}\ell_{sch}^{p}(\G)}\leq 2^{-1/q}.    
\end{equation}

Let $(H_{1},H_{2})\in\ell_{sch}^{p}(\G)\oplus_{q}\ell_{sch}^{p}(\G)$ and $(F_{1},F_{2})\in\ell_{sch}^{q}(\G)\oplus_{p}\ell_{sch}^{q}(\G)$, then (see, \cite[Definition 10.3.29]{ruzhansky2010pseudo} for more information on duality bracket)
\begin{align*}
\langle S(H_{1}, H_{2}), (F_{1},F_{2})\rangle_{\G}=&\left\langle \left(\frac{H_{1}+H_{2}}{2}, \frac{H_{1}-H_{2}}{2}\right), (F_{1},F_{2})\right\rangle_{\G}\\
:=\sum_{[\xi]\in\G}\Dim(\xi) &\mathrm{Tr}\left(\frac{H_{1}(\xi)+H_{2}(\xi)}{2}\cdot F_{1}(\xi)\right)+\sum_{[\xi]\in\G}\Dim(\xi) \mathrm{Tr}\left(\frac{H_{1}(\xi)-H_{2}(\xi)}{2}\cdot F_{2}(\xi)\right)\\
=\sum_{[\xi]\in\G}\Dim(\xi) &\mathrm{Tr}\left(H_{1}(\xi)\cdot\frac{F_{1}(\xi)+F_{2}(\xi)}{2}\right)+\sum_{[\xi]\in\G}\Dim(\xi) \mathrm{Tr}\left(H_{2}(\xi)\cdot\frac{F_{1}(\xi)-F_{2}(\xi)}{2}\right)\\
=\Bigl\langle (H_{1},H_{2})&, \left(\frac{F_{1}+F_{2}}{2}, \frac{F_{1}-F_{2}}{2}, \right)\Bigr\rangle_{\G}.
\end{align*}
Therefore, the Banach space adjoint $S^{\ast}$ from $\left(\ell_{sch}^{p}(\G)\oplus_{p}\ell_{sch}^{p}(\G)\right)^{\ast}$ into $\left(\ell_{sch}^{p}(\G)\oplus_{q}\ell_{sch}^{p}(\G)\right)^{\ast}$ is also defined by the formula
$$S^{\ast}(F_{1},F_{2})(\xi,\xi):=\left(\frac{F_{1}(\xi)+F_{2}(\xi)}{2},\frac{F_{1}(\xi)-F_{2}(\xi)}{2}\right).$$ 

Moreover, by Proposition \ref{duality of the direct sum}, one has
\begin{equation}\begin{split}\label{duality identity}
&\left(\ell_{sch}^{p}(\G)\oplus_{p}\ell_{sch}^{p}(\G)\right)^{\ast}=\ell_{sch}^{q}(\G)\oplus_{q}\ell_{sch}^{q}(\G),\\
&\left(\ell_{sch}^{p}(\G)\oplus_{q}\ell_{sch}^{p}(\G)\right)^{\ast}=\ell_{sch}^{q}(\G)\oplus_{p}\ell_{sch}^{q}(\G).
\end{split}\end{equation}
Since $1<q\leq 2$, by \eqref{duality identity} and by \ref{new first inequality} (where $p$ and $q$ are interchanged), it follows that the operator $S^{\ast}$ acts from $\ell_{sch}^{q}(\G)\oplus_{q}\ell_{sch}^{q}(\G)$ into $\ell_{sch}^{q}(\G)\oplus_{p}\ell_{sch}^{q}(\G)$ with norm 
$$\lv S^{\ast}\rv_{\ell_{sch}^{q}(\G)\oplus_{q}\ell_{sch}^{q}(\G)\rightarrow\ell_{sch}^{q}(\G)\oplus_{p}\ell_{sch}^{q}(\G)}\leq 2^{-1/q}.$$
Hence, by the norm equality of an operator from Banach space to another and its adjoint on dual spaces \cite[Theorem 4.10]{rudin1973functional}, we finally have that 
\begin{equation*}\begin{split}
\lv S\rv&_{\ell_{sch}^{p}(\G)\oplus_{q}\ell_{sch}^{p}(\G)\rightarrow \ell_{sch}^{p}(\G)\oplus_{p}\ell_{sch}^{p}(\G)}=\lv S^{\ast}\rv_{\left(\ell_{sch}^{p}(\G)\oplus_{p}\ell_{sch}^{p}(\G)\right)^{\ast}\rightarrow\left(\ell_{sch}^{p}(\G)\oplus_{q}\ell_{sch}^{p}(\G)\right)^{\ast}}\\
&=\lv S^{\ast}\rv_{\ell_{sch}^{q}(\G)\oplus_{q}\ell_{sch}^{q}(\G)\rightarrow\ell_{sch}^{q}(\G)\oplus_{p}\ell_{sch}^{q}(\G)}\leq 2^{-1/q},
\end{split}\end{equation*}
which shows that \eqref{enough to prove 1} holds, and ends the proof.
\end{proof}

As a consequence of the Clarkson inequality given in Propositition \ref{new Clarkson type inequality}, one can prove that the spaces $\ell_{sch}^{p}(\G)$ have the following Kadec-Klee property for $1<p<\infty$ \cite[Addenda, p. 130]{simon2005trace}. 

\begin{corollary}
Let $1<p<\infty$ and $H,H_{n}\in\ell_{sch}^{p}(\G),$ $n\geq 1$. If $$\lv H_{n}\psrv\rightarrow\lv H\psrv,\quad n\rightarrow\infty,$$
and if $H_{n}\rightarrow H,$ in the weak topology $\sigma(\ell_{sch}^{p}(\G),\ell_{sch}^{q}(\G))$, where $q$ is the conjugate index of $p$, then
$$\lv H_{n}-H\psrv\rightarrow0,\quad n\rightarrow\infty.$$
\end{corollary}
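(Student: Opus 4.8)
The plan is to derive norm convergence directly from the Clarkson inequalities of Proposition \ref{new Clarkson type inequality}, using as the only external ingredient the weak lower semicontinuity of the norm. First I would record the auxiliary facts. Since $\left(\ell_{sch}^{p}(\G)\right)^{\ast}\cong\ell_{sch}^{q}(\G)$ by Proposition \ref{duality of lp}, the topology $\sigma(\ell_{sch}^{p}(\G),\ell_{sch}^{q}(\G))$ is exactly the weak topology on $\ell_{sch}^{p}(\G)$, and therefore the norm $\lv\cdot\psrv$ is weakly lower semicontinuous. Moreover, from $H_{n}\to H$ weakly it follows that $\tfrac{H_{n}+H}{2}\to H$ weakly, and hence
$$\liminf_{n\to\infty}\lv \frac{H_{n}+H}{2}\psrv\geq \lv H\psrv.$$

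I would then split into two cases according to $p$. For $2\leq p<\infty$, applying inequality \ref{new second inequality} with $H_{1}=H_{n}$ and $H_{2}=H$ and rearranging gives
$$\lv \frac{H_{n}-H}{2}\psrv^{p}\leq \left(\frac{1}{2}\left(\lv H_{n}\psrv^{q}+\lv H\psrv^{q}\right)\right)^{p/q}-\lv \frac{H_{n}+H}{2}\psrv^{p}.$$
Letting $n\to\infty$, the hypothesis $\lv H_{n}\psrv\to\lv H\psrv$ makes the first term on the right tend to $\lv H\psrv^{p}$ (by continuity of $t\mapsto t^{p/q}$), while the $\liminf$ estimate above forces the subtracted term to be at least $\lv H\psrv^{p}$ in the limit. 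Consequently $\limsup_{n}\lv \tfrac{H_{n}-H}{2}\psrv^{p}\leq 0$, which yields $\lv H_{n}-H\psrv\to 0$.

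For $1<p\leq 2$ the identical scheme applies using inequality \ref{new first inequality}, this time isolating $\lv \tfrac{H_{n}-H}{2}\psrv^{q}$ and using that $\tfrac{1}{2}\left(\lv H_{n}\psrv^{p}+\lv H\psrv^{p}\right)\to\lv H\psrv^{p}$ together with continuity of $t\mapsto t^{q/p}$; again the dominating term and the weakly lower semicontinuous term both converge to $\lv H\psrv^{q}$, collapsing the difference to zero.

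I expect no genuine obstacle: the argument is the standard passage from uniform convexity to the Kadec--Klee (Radon--Riesz) property, streamlined so that the Clarkson inequality does the work directly, and it even absorbs the degenerate case $H=0$ automatically. The only point meriting a line of care is the identification of $\sigma(\ell_{sch}^{p}(\G),\ell_{sch}^{q}(\G))$ with the weak topology via Proposition \ref{duality of lp}, which is what legitimizes the lower semicontinuity step; the remainder is the limit bookkeeping described above.
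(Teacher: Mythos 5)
Your proposal is correct and follows essentially the same route as the paper: both rest on the Clarkson inequalities of Proposition \ref{new Clarkson type inequality} together with the weak lower semicontinuity of the norm applied to $\tfrac{H_{n}+H}{2}$. The only difference is cosmetic --- the paper argues by contradiction on a subsequence with $\lv H_{n}-H\psrv\geq\varepsilon$, whereas you take a $\limsup$ directly; the limit bookkeeping is identical.
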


\begin{proof}
First we prove that the Banach norm is weakly lower semicontinuous, i.e. if $(X, \|\cdot\|)$ is the Banach space and $X\ni x_{n}\rightarrow x\in X$ weakly, then $$\|x\|\leq \liminf_{x_{n}\rightarrow x}\|x_{n}\|.$$
By the Hahn-Banach theorem, there exists $\varphi\in X^{\ast}$ such that $\lv \varphi\rv=1$ and $\varphi(x)=\lv x\rv.$ Since $x_{n}$ converges to $x$ weakly, we have
$$\lv x\rv=\varphi(x)=\liminf_{x_{n}\rightarrow x}\varphi(x_{n})=\liminf_{x_{n}\rightarrow x}|\varphi(x_{n})|\leq \liminf_{x_{n}\rightarrow x} \lv \varphi\rv \lv x_{n}\rv=\liminf_{x_{n}\rightarrow x} \lv x_{n}\rv.$$

By the assumption that $H_{n}$ weakly converges to $H$ and by the above fact that any Banach norm is weakly lower semicontinuous, it follows that 
$$2\lv H\psrv\leq \liminf_{n\rightarrow\infty}\lv H_{n}+H\psrv\leq \limsup_{n\rightarrow\infty}\lv H_{n}+H\psrv$$
$$\leq \limsup_{n\rightarrow\infty}\left(\lv H_{n}\psrv+\lv H\psrv\right)=2\lv H\psrv.$$
In particular 
$$\lv H_{n}+H\psrv\rightarrow2\lv H\psrv,\quad n\rightarrow\infty.$$

Assume the contrary that $\lv H_{n}-H\psrv$ does not converge to zero as $n$ goes to infinity. Then, passing to the subsequence if necessary, there exists $\varepsilon>0$ such that 
$$\lv H_{n}-H\psrv\geq \varepsilon\quad\text{for all} \quad n\geq 1.$$
Therefore, if $1<p\leq 2$, by Proposition \ref{new Clarkson type inequality} \ref{new first inequality}, one has 
\begin{multline*}
\frac{\varepsilon^{q}}{2^{q}}\leq \lv \frac{H_{n}-H}{2}\psrv^{q}\leq \left(\frac{1}{2}\left(\lv H_{n}\psrv^{p}+\lv H\psrv^{p}\right)\right)^{q/p}-\lv \frac{H_{n}+H}{2}\psrv^{q}\rightarrow 0,\quad n\rightarrow\infty.
\end{multline*}
Furthermore, if $2\leq p<\infty$, by Proposition \ref{new Clarkson type inequality} \ref{new second inequality}, one has 
\begin{multline*}
\frac{\varepsilon^{p}}{2^{p}}\leq \lv \frac{H_{n}-H}{2}\psrv^{p}\leq \left(\frac{1}{2}\left(\lv H_{n}\psrv^{q}+\lv H\psrv^{q}\right)\right)^{p/q}-\lv \frac{H_{n}+H}{2}\psrv^{p}\rightarrow 0,\quad n\rightarrow\infty.
\end{multline*}
Note that both cases lead to the contradiction. Hence, 
$$\lv H_{n}-H\psrv\rightarrow0,\quad n\rightarrow\infty,$$
completing the proof.
\end{proof}

\subsection{Geometric properties of $\ell_{sch}^{p}(\G)$}\label{geometric properties}

In this subsection, we consider various geometric properties such as uniform smoothness, uniform convexity as well as type and cotype properties of the family of $\ell^{p}$-spaces $\ell_{sch}^{p}(\widehat{G})$ based on the Schatten-von Neumann ideals, which is defined in Definition \ref{Schatten family of lp spaces}.

As in the theory of classical $l^p$-spaces, the spaces $\ell_{sch}^{p}(\G)$ are uniformly convex and uniformly smooth, which is stated in the following 
\begin{theorem}\label{mainth0} The space $\ell_{sch}^{p}(\widehat{G})$ is uniformly convex and uniformly smooth for $1<p<\infty$. Moreover, $\ell_{sch}^{p}(\G)$ is reflexive for $1<p<\infty.$
\end{theorem}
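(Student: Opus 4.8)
The plan is to read off uniform convexity directly from the Clarkson type inequalities of Proposition \ref{new Clarkson type inequality}, to obtain uniform smoothness by duality, and to deduce reflexivity from the Milman--Pettis theorem. Since the analytic substance is already contained in the Clarkson inequalities, the argument is essentially an assembly of the preceding results.

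First I would establish uniform convexity. Fix $1<p<\infty$, let $q$ denote its conjugate index, and take $H_{1},H_{2}\in\ell_{sch}^{p}(\widehat{G})$ with $\lv H_{1}\psrv=\lv H_{2}\psrv=1$ and $\lv H_{1}-H_{2}\psrv=\varepsilon$ for some $0<\varepsilon\le2$; note that $\lv \tfrac{H_{1}-H_{2}}{2}\psrv=\tfrac{\varepsilon}{2}$. If $2\le p<\infty$, the right-hand side of Proposition \ref{new Clarkson type inequality} \ref{new second inequality} equals $1$, so that
\begin{equation*}
\lv \tfrac{H_{1}+H_{2}}{2}\psrv^{p}+\Big(\tfrac{\varepsilon}{2}\Big)^{p}\le 1,
\end{equation*}
whence $\lv \tfrac{H_{1}+H_{2}}{2}\psrv\le\big(1-(\varepsilon/2)^{p}\big)^{1/p}$ and the modulus of convexity satisfies $\delta_{\ell_{sch}^{p}(\widehat{G})}(\varepsilon)\ge 1-\big(1-(\varepsilon/2)^{p}\big)^{1/p}>0$. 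If $1<p\le2$, the same computation applied to Proposition \ref{new Clarkson type inequality} \ref{new first inequality} (whose right-hand side again equals $1$) gives $\lv \tfrac{H_{1}+H_{2}}{2}\psrv\le\big(1-(\varepsilon/2)^{q}\big)^{1/q}$, so that $\delta_{\ell_{sch}^{p}(\widehat{G})}(\varepsilon)>0$ as well. In either regime the modulus of convexity is strictly positive for every $0<\varepsilon\le2$, which is exactly uniform convexity in the sense of Definition \ref{uniform convexity and smoothness}.

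For uniform smoothness I would argue by duality rather than estimate the modulus of smoothness directly. Since $1<p<\infty$ forces $1<q<\infty$, the uniform convexity just proved applies verbatim to $\ell_{sch}^{q}(\widehat{G})$. By Proposition \ref{duality of lp} \ref{duality statement 2} one has the isometric identification $\left(\ell_{sch}^{q}(\widehat{G})\right)^{\ast}\cong\ell_{sch}^{p}(\widehat{G})$. Applying the duality between the two moduli from \cite[Proposition 1.e.2]{lindenstrauss2013classical}---a Banach space is uniformly convex if and only if its dual is uniformly smooth---with $X=\ell_{sch}^{q}(\widehat{G})$ then shows that $X^{\ast}\cong\ell_{sch}^{p}(\widehat{G})$ is uniformly smooth.

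Reflexivity follows at once: a uniformly convex space is reflexive by the Milman--Pettis theorem (Proposition \ref{MilmanPettis}), and uniform convexity has been verified for every $1<p<\infty$. The only step that needs genuine care is the duality argument for smoothness, where one must match the isometric identification of $\left(\ell_{sch}^{q}\right)^{\ast}$ with $\ell_{sch}^{p}$ against the correct direction of the convexity--smoothness equivalence; once this bookkeeping is in place, no substantial obstacle remains, as the convexity estimate is a direct consequence of the already established Clarkson inequalities.
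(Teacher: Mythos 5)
Your argument is correct, and the uniform convexity part coincides with what the paper does: the paper's Lemma \ref{lemma0} extracts from Proposition \ref{new Clarkson type inequality} exactly the bound $\lv\tfrac{H_{1}+H_{2}}{2}\psrv\le(1-(\varepsilon/2)^{q})^{1/q}$ (resp.\ with $p$ in place of $q$), only adding a Bernoulli step to convert it into the cleaner lower bound $\delta(\varepsilon)\ge \varepsilon^{q}/(q2^{q})$. Where you genuinely diverge is uniform smoothness: the paper estimates the modulus of smoothness directly, using convexity of $x\mapsto x^{q}$ together with the Clarkson inequality to obtain $\rho_{\ell_{sch}^{p}(\widehat{G})}(t)\le t^{p}/p$ (resp.\ $t^{q}/q$), whereas you pass to the dual via Proposition \ref{duality of lp} \ref{duality statement 2} and the Lindenstrauss--Tzafriri equivalence. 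Your route is softer and shorter, but the paper's explicit power-type bounds are not merely decorative: they are reused later (e.g.\ Lemma \ref{lemma0} \ref{lemma0 inequality 2} enters the proof of the Orlicz-type corollary), so the direct estimate buys quantitative information that the qualitative duality argument does not immediately provide.

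One point of bookkeeping you should make explicit: the proof of Proposition \ref{duality of lp} \ref{duality statement 2} in the paper invokes Proposition \ref{reflexivity}, whose proof in turn cites Theorem \ref{mainth0}. As written, you invoke the duality (for smoothness) before you record reflexivity, which leaves the appearance of a circle. The circle is breakable, because your uniform convexity argument uses only the Clarkson inequalities and is independent of any duality; so the correct order is: uniform convexity for all $1<p<\infty$, then reflexivity via Milman--Pettis, then the identification $\left(\ell_{sch}^{q}(\widehat{G})\right)^{\ast}\cong\ell_{sch}^{p}(\widehat{G})$ (whose proof now has the reflexivity of $\ell_{sch}^{q}(\widehat{G})$ available), and only then uniform smoothness of $\ell_{sch}^{p}(\widehat{G})$ as the dual of a uniformly convex space. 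With that reordering the argument is complete.
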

To prove Theorem \ref{mainth0}, we first present the following lemma on lower (resp. upper) estimates for the modulus of convexity (resp. smoothness) similar to the case of classical $\ell^p$-spaces.
\begin{lemma}\label{lemma0}
Let $1<p<\infty$, $0<\varepsilon\leq2$ and $t>0.$ Let $q$ be the conjugate index of $p$, i.e., $\frac{1}{p}+\frac{1}{q}=1.$
\begin{enumerate}[label=(\roman*)]
    \item \label{lemma0 inequality 1} If $1<p\leq 2,$ then 
    $$\delta_{\ell_{sch}^{p}(\widehat{G})}(\varepsilon)\geq \frac{\varepsilon^{q}}{q\cdot 2^{q}},\quad \rho_{\ell_{sch}^{p}(\widehat{G})}(t)\leq \frac{t^{p}}{p}.$$
    \item \label{lemma0 inequality 2}If $2<p< \infty,$ then 
    $$\delta_{\ell_{sch}^{p}(\widehat{G})}(\varepsilon)\geq \frac{\varepsilon^{p}}{p\cdot 2^{p}},\quad \rho_{\ell_{sch}^{p}(\widehat{G})}(t)\leq \frac{t^{q}}{q}.$$
\end{enumerate}
\end{lemma}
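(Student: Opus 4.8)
The plan is to derive all four estimates directly from the Clarkson type inequalities of Proposition \ref{new Clarkson type inequality}, combined with two elementary scalar facts that both descend from a single tangent line estimate. Namely, by concavity of $u\mapsto u^{\alpha}$ on $[0,\infty)$ one has $u^{\alpha}\le 1+\alpha(u-1)$ for every $u\ge 0$ and $\alpha\in(0,1]$; substituting $u=1-s$ gives $(1-s)^{\alpha}\le 1-\alpha s$ for $s\in[0,1]$, and $u=1+s$ gives $(1+s)^{\alpha}\le 1+\alpha s$ for $s\ge 0$. I will invoke these with $\alpha=1/p$ or $\alpha=1/q$ in the final step of each estimate.

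For the convexity bounds I would fix $x,y$ with $\lv x\psrv=\lv y\psrv=1$ and $\lv x-y\psrv=\varepsilon$, and apply Proposition \ref{new Clarkson type inequality} with $H_{1}=x$, $H_{2}=y$. When $1<p\le 2$, part \ref{new first inequality} gives $\lv\frac{x+y}{2}\psrv^{q}+(\varepsilon/2)^{q}\le 1$, hence $\lv\frac{x+y}{2}\psrv\le(1-(\varepsilon/2)^{q})^{1/q}$; the inequality $(1-s)^{\alpha}\le 1-\alpha s$ with $\alpha=1/q$ and $s=(\varepsilon/2)^{q}\in[0,1]$ then yields $1-\lv\frac{x+y}{2}\psrv\ge \varepsilon^{q}/(q\,2^{q})$, and taking the infimum over all admissible $x,y$ gives the asserted lower bound for $\delta_{\ell_{sch}^{p}(\G)}$. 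The case $2<p<\infty$ is identical using part \ref{new second inequality}, with $q$ replaced by $p$ throughout.

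For the smoothness bounds I would instead apply Proposition \ref{new Clarkson type inequality} to the pair $H_{1}=x$, $H_{2}=ty$ with $\lv x\psrv=\lv y\psrv=1$, so that $\frac{H_{1}\pm H_{2}}{2}=\frac{x\pm ty}{2}$ and $\lv x\psrv^{p}+\lv ty\psrv^{p}=1+t^{p}$. The one genuine subtlety is that the Clarkson inequalities mix the exponents $p$ and $q$, so they do not literally control the symmetric average $\frac{\lv x+ty\psrv+\lv x-ty\psrv}{2}$ appearing in $\rho$; I would bridge this with the power mean inequality $\frac{a+b}{2}\le\bigl(\frac{a^{r}+b^{r}}{2}\bigr)^{1/r}$ for $r\ge 1$. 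Concretely, for $1<p\le 2$ part \ref{new first inequality} gives $\bigl(\lv x+ty\psrv^{q}+\lv x-ty\psrv^{q}\bigr)^{1/q}\le 2^{1/q}(1+t^{p})^{1/p}$, and the power mean with $r=q$ reduces this to $\frac{\lv x+ty\psrv+\lv x-ty\psrv}{2}\le(1+t^{p})^{1/p}\le 1+t^{p}/p$, which is exactly $\rho_{\ell_{sch}^{p}(\G)}(t)\le t^{p}/p$. For $2<p<\infty$ the same scheme with part \ref{new second inequality} and power mean exponent $r=p$ produces $\frac{\lv x+ty\psrv+\lv x-ty\psrv}{2}\le(1+t^{q})^{1/q}\le 1+t^{q}/q$.

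The argument is essentially computational once the inputs are fixed, so I expect no serious obstacle; the only real care is the exponent bookkeeping, namely choosing the correct branch of Clarkson ($q$ versus $p$), the matching power mean exponent, and the right $\alpha$ in the tangent line estimate. As an alternative for the smoothness half one could bypass the power mean step by using the duality between modulus of convexity and modulus of smoothness together with $(\ell_{sch}^{p}(\G))^{\ast}\cong\ell_{sch}^{q}(\G)$ from Proposition \ref{duality of lp}, feeding in the already-established convexity bound for $\ell_{sch}^{q}(\G)$; I prefer the direct route above since it keeps the lemma self-contained.
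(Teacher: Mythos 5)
Your proposal is correct and follows essentially the same route as the paper: the convexity bounds come from Proposition \ref{new Clarkson type inequality} applied to unit vectors followed by Bernoulli's inequality (your tangent line estimate $(1-s)^{\alpha}\le 1-\alpha s$ is exactly that), and the smoothness bounds come from applying Clarkson to the pair $(x,ty)$ and bridging to the symmetric average via the power mean inequality, which is precisely the convexity of $u\mapsto u^{q}$ (resp.\ $u^{p}$) used in the paper, again finished with Bernoulli. The exponent bookkeeping in both halves matches the paper's computation, so there is nothing to correct.
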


\begin{proof}
(i). Let $1<p\leq 2$ be given. We first prove the inequality for the modulus of convexity. Assume that $H_{1},H_{2}\in \ell_{sch}^{p}(\widehat{G})$ are such that $\lv H_{1}\psrv=\lv H_{2}\psrv=1$ and $\lv H_{1}-H_{2}\psrv=\varepsilon.$ Therefore, by Proposition \ref{new Clarkson type inequality} \ref{new first inequality}, one has 
$$\left(\lv \frac{H_{1}+H_{2}}{2} \psrv^{q}+\left(\frac{\varepsilon}{2}\right)^{q}\right)^{1/q}=\left(\lv \frac{H_{1}+H_{2}}{2} \psrv^{q}+\lv \frac{H_{1}-H_{2}}{2} \psrv^{q}\right)^{1/q}$$
$$\leq \left(\frac{1}{2}\left(\lv H_{1}\psrv^{p}+\lv H_{2}\psrv^{p}\right)\right)^{1/p}=1.$$
Hence, 
$$\lv \frac{H_{1}+H_{2}}{2}\psrv^{q}\leq 1-\left(\frac{\varepsilon}{2}\right)^{q},$$
which means
$$\lv \frac{H_{1}+H_{2}}{2}\psrv\leq \left(1-\left(\frac{\varepsilon}{2}\right)^{q}\right)^{1/q}\leq 1-\frac{\varepsilon^{q}}{q 2^{q}},$$
where the last inequality follows from Bernoulli's inequality, that is $(1+x)^{t}\leq 1+tx$ for every real number $0\leq t\leq 1$ and $x\geq -1.$ Finally, writing the last expression as 
$$\frac{\varepsilon^{q}}{q 2^{q}}\leq 1-\lv \frac{H_{1}+H_{2}}{2}\psrv$$
and taking the infimum over all $H_{1}, H_{2}\in \ell_{sch}^{p}(\widehat{G})$ satisfying the initial assumptions, we have $$\delta_{\ell_{sch}^{p}(\widehat{G})}(\varepsilon)\geq \frac{\varepsilon^{q}}{q 2^{q}},\quad 0<\varepsilon\leq2.$$

Now, we prove the inequality for the modulus of smoothness. Assume that $H_{1}, H_{2}\in \ell_{sch}^{p}(\widehat{G})$ are such that $\lv H_{1}\psrv=\lv H_{2}\psrv=1.$ Note that a function $f(x)=x^q,$ $x>0,$ is convex, since $q\geq 2$. Hence, assuming that 
$$x_{1}=\lv H_{1}+tH_{2}\psrv,\quad x_{2}=\lv H_{1}-tH_{2}\psrv,$$
by the definition, one has 
\begin{equation*}\begin{split}
f\left(\frac{x_{1}+x_{2}}{2}\right)&=\left(\frac{\lv H_{1}+tH_{2}\psrv+\lv H_{1}-tH_{2}\psrv}{2}\right)^{q}\\
&\leq \frac{f\left(x_{1}\right)}{2}+\frac{f\left(x_{2}\right)}{2}= \frac{\lv H_{1}+tH_{2}\psrv^{q}}{2}+\frac{\lv H_{1}-tH_{2}\psrv^{q}}{2}\\
&=\frac{2^q}{2}\left(\lv \frac{H_{1}+tH_{2}}{2}\psrv^q+\lv \frac{H_{1}-tH_{2}}{2}\psrv^q\right).
\end{split}\end{equation*}
This can be rewritten as
$$\frac{\lv H_{1}+tH_{2}\psrv+\lv H_{1}-tH_{2}\psrv}{2}\leq \frac{2}{2^{1/q}}\left(\lv \frac{ H_{1}+tH_{2}}{2}\psrv^{q}+\lv \frac{H_{1}-tH_{2}}{2}\psrv^{q}\right)^{1/q}.$$
Further, applying Proposition \ref{new Clarkson type inequality} \ref{new first inequality} to the right hand side, we have
$$\frac{\lv H_{1}+tH_{2}\psrv+\lv H_{1}-tH_{2}\psrv}{2}\leq \frac{2}{2^{1/q}}\left(\frac{1}{2}\left(\lv H_{1}\psrv^{p}+\lv tH_{2}\psrv^{p}\right)\right)^{1/p}=(1+t^{p})^{1/p}.$$
Hence, one has
$$\frac{\lv H_{1}+tH_{2}\psrv+\lv H_{1}-tH_{2}\psrv}{2}-1\leq (1+t^{p})^{1/p}-1\leq \frac{t^{p}}{p},$$
where the last inequality follows again from Bernoulli's inequality. Finally, taking the supremum over all $H_{1},H_{2}\in \ell_{sch}^{p}(\widehat{G})$ satisfying the initial assumptions, we have 
$$\rho_{\ell_{sch}^{p}(\widehat{G})}\leq \frac{t^{p}}{p},\quad t>0.$$

(ii). The proof of the second part follows the same line of reasoning as in part (i) and the inequality in Proposition \ref{new Clarkson type inequality} \ref{new second inequality}, hence, we omit it.
\end{proof}

\begin{proof}[Proof of Theorem \ref{mainth0}] Let $1<p\leq 2$. Then, by Lemma \ref{lemma0} \ref{lemma0 inequality 1}, one has 
$$\delta_{\ell_{sch}^{p}(\widehat{G})}(\varepsilon)\leq \frac{\varepsilon^{q}}{q\cdot 2^{q}},\quad \forall \varepsilon>0$$
and
$$\lim_{t\rightarrow0}\frac{\rho_{\ell_{sch}^{p}(\widehat{G})}}{t}\leq \lim_{t\rightarrow0}\frac{t^{p-1}}{p}=0.$$
Hence, $\ell_{sch}^{p}(\widehat{G})$ is uniformly convex and uniformly smooth for $1<p
\leq 2$. The case when $2<p<\infty$ analogously follows from Lemma \ref{lemma1} \ref{lemma1 inequality 2}, hence, we omit it. 
The reflexivity follows from Proposition \ref{MilmanPettis}.
\end{proof}

We have the following modification of Prosposition \ref{new Clarkson type inequality} similar to \cite[Theorem 5.3]{pisier2003non}, which further helps us to provide optimal rate estimates on the modulus of convexity and the modulus of smoothness when $1<p<2$ and $2<p<\infty$, respectively.

\begin{theorem}\label{mainth2}
\begin{enumerate}[label=(\roman*)]
    \item\label{ineq1} If $2\leq p<\infty$, then for any $H_{1},H_{2}\in \ell_{sch}^{p}(\widehat{G})$, one has 
    \begin{equation*}\begin{split}
    \biggl(\frac{1}{2}\biggl(\lv H_{1}+H_{2}\psrv^{p}+\lv H_{1}-H_{2}\psrv^{p}\biggr)\biggr)^{1/p}\leq \left(\lv H_{1}\psrv^2+C_{p}\lv H_{2}\psrv^2\right)^{1/2},
    \end{split}\end{equation*}
    where $C_{p}$ is a positive constant depending only on $p$ and $C_{p}
    \leq 2p-1$.
    \item\label{ineq2} If $1<p\leq 2$, then for any $H_{1},H_{2}\in \ell_{sch}^{p}(\widehat{G})$, one has
    \begin{equation*}\begin{split}
    \Bigl(\lv H_{1}\psrv^2+c_{p}\lv H_{2}\psrv^2\Bigr)^{1/2}\leq \left(\frac{1}{2}\left(\lv H_{1}+H_{2}\psrv^{p}+\lv H_{1}-H_{2}\psrv^{p}\right)\right)^{1/p},
    \end{split}\end{equation*}
    where the constant $c_{p}$ depends only on $p$ and $c_{p}
    \geq \frac{p-1}{p+1}.$ 
\end{enumerate}
\end{theorem}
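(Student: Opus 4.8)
The plan is to treat the two statements as a dual pair: I would prove \ref{ineq1} for $2\le p<\infty$ and then obtain \ref{ineq2} for $1<p\le 2$ by duality, exactly as part \ref{new second inequality} of Proposition \ref{new Clarkson type inequality} was deduced from part \ref{new first inequality}. First I reformulate \ref{ineq1} as an operator bound for the transform $W(H_1,H_2)=(H_1+H_2,H_1-H_2)$: it is equivalent to $\lv W\rv_{\ell_{sch}^{p}(\G)\oplus_{2,C_{p}}\ell_{sch}^{p}(\G)\to \ell_{sch}^{p}(\G)\oplus_{p}\ell_{sch}^{p}(\G)}\le 2^{1/p}$, since $\bigl(\tfrac12(\lv H_1+H_2\psrv^p+\lv H_1-H_2\psrv^p)\bigr)^{1/p}=2^{-1/p}\lv W(H_1,H_2)\rv_{\oplus_p}$ while $(\lv H_1\psrv^2+C_p\lv H_2\psrv^2)^{1/2}=\lv (H_1,H_2)\rv_{\oplus_{2,C_p}}$.

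For the duality step, $W$ is self-adjoint with respect to the bracket $\langle\cdot,\cdot\rangle_{\G}$: the same computation that gave $S^{\ast}=S$ in Proposition \ref{new Clarkson type inequality}\ref{new second inequality} yields $\langle W(H_1,H_2),(F_1,F_2)\rangle_{\G}=\langle (H_1,H_2),W(F_1,F_2)\rangle_{\G}$. Combining $W^{\ast}=W$ with the weighted duality $\bigl(\ell_{sch}^{p}(\G)\oplus_{p}\ell_{sch}^{p}(\G)\bigr)^{\ast}\cong \ell_{sch}^{q}(\G)\oplus_{q}\ell_{sch}^{q}(\G)$ and $\bigl(\ell_{sch}^{p}(\G)\oplus_{2,C_p}\ell_{sch}^{p}(\G)\bigr)^{\ast}\cong \ell_{sch}^{q}(\G)\oplus_{2,1/C_p}\ell_{sch}^{q}(\G)$ supplied by Proposition \ref{duality of the direct sum}, and the equality of the norm of an operator and of its Banach-space adjoint, the bound for $W$ on the $p$-side transfers verbatim to the $q$-side. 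Unwinding this yields \ref{ineq2} for the conjugate exponent $q=p'$ with $c_q=1/C_p$; since $\frac{q-1}{q+1}=\frac{1}{2p-1}$, the estimate $C_p\le 2p-1$ is exactly equivalent to $c_q\ge\frac{q-1}{q+1}$, so the two claimed constants match and nothing is lost.

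The heart of the matter is therefore \ref{ineq1}, i.e. the power-type-$2$ (uniform smoothness) sharpening of Proposition \ref{new Clarkson type inequality}\ref{new second inequality}. I would first reduce to a tracial statement: $\ell_{sch}^{p}(\G)$ is the noncommutative $L^p$-space of the von Neumann algebra $\bigoplus_{[\xi]\in\G}\mathcal{L}(\mathcal{H}_\xi)$ with trace $\tau=\sum_{[\xi]}\Dim(\xi)\,\mathrm{Tr}$, so by a finite-rank truncation and density argument (using Propositions \ref{adjoint} and \ref{Holder inequality} together with \enquote{Fact A} on traces from the proof of Proposition \ref{duality of lp}) it suffices to prove the two-point inequality for finitely supported $H_1,H_2$. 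On these, the inequality is the operator analogue of the scalar two-point inequality $\bigl(\tfrac12(|a+b|^p+|a-b|^p)\bigr)^{1/p}\le (|a|^2+C_p|b|^2)^{1/2}$ for $p\ge 2$. I would then establish the operator version by the complex-interpolation (three-lines) method already used in the proof of Proposition \ref{Schatten interpolation}: anchor an analytic family at the Hilbert endpoint $p=2$, where the parallelogram identity \eqref{parallelogram identity} gives the exact bound with constant $1$ and, crucially, produces the Euclidean combination $\lv H_1\psrv^2+C_p\lv H_2\psrv^2$ on the right, while controlling the remaining endpoint by the triangle inequality.

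The main obstacle is extracting the explicit constant $C_p\le 2p-1$. A purely abstract application of Calderón's weighted interpolation (Proposition \ref{calderon interpolation 2}) to the direct sums does not suffice, because at the Hilbert endpoint the transform $W$ already has norm $\sqrt2$, which saturates the entire budget $2^{1/p}=2^{(1-\theta)/2}$ and leaves no room for the non-Hilbert endpoint; this is precisely why the sharper, analytic two-point argument is required. The sharp constant is $p-1$ (the Ball--Carlen--Lieb inequality), whose operator lifting is delicate; the softer complex-interpolation lifting I propose, in the spirit of \cite[Theorem 5.3]{pisier2003non}, gives the weaker but explicit $C_p\le 2p-1$, which is all that is claimed. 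The careful bookkeeping of constants through the maximum principle, together with verifying that the finite-rank reduction and the trace manipulations (\enquote{Fact A}) carry through in the weighted setting, will be the technical core of the argument.
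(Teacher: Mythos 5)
Your treatment of part \ref{ineq2} is exactly the paper's: reformulate \ref{ineq1} as the bound $\lv W\rv\le 2^{1/p}$ for $W(H_1,H_2)=(H_1+H_2,H_1-H_2)$ between the weighted direct sums, check that $W$ is self-adjoint for $\langle\cdot,\cdot\rangle_{\G}$, invoke Proposition \ref{duality of the direct sum} and the equality of adjoint norms, and read off $c_p=1/C_q\ge 1/(2q-1)=(p-1)/(p+1)$. That half is correct and needs no changes.

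The gap is in part \ref{ineq1}, which you rightly call the heart of the matter but do not actually prove. You correctly diagnose that interpolating the transform $W$ between the Hilbert endpoint $p=2$ (where its norm is already $2^{1/2}$) and an endpoint controlled only by the triangle inequality cannot work --- indeed at $p=\infty$ the inequality $\lv H_1\psrv+\lv H_2\psrv\le(\lv H_1\psrv^{2}+C\lv H_2\psrv^{2})^{1/2}$ fails for every finite $C$, so there is no usable second anchor of that kind. But your proposed remedy, an analytic three-lines family ``anchored at $p=2$ with the remaining endpoint controlled by the triangle inequality,'' is precisely the scheme you have just shown to be unworkable, and citing the spirit of Pisier--Xu does not supply the missing mechanism. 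What the paper actually does, and what your proposal omits entirely, is the self-improvement step $p\mapsto 2p$: setting $A=H_1^{\ast}H_1+H_2^{\ast}H_2$ and $B=H_1^{\ast}H_2+H_2^{\ast}H_1$, one has the algebraic identity $\lv H_1\pm H_2\rv_{\ell_{sch}^{2p}(\G)}^{2p}=\lv A\pm B\rv_{\ell_{sch}^{p}(\G)}^{p}$, and applying the case $p$ to the pair $(A,B)$ together with H\"older (Proposition \ref{Holder inequality}) and Proposition \ref{adjoint} yields the case $2p$ with $C_{2p}\le 2C_p+1$. Starting from $C_2=1$ (parallelogram identity) this gives $C_{2^n}\le 2^n-1$ at all dyadic exponents, and only then does Calder\'on's weighted interpolation (Propositions \ref{calderon interpolation 1} and \ref{calderon interpolation 2}) enter --- between the two \emph{consecutive dyadic} exponents $2^n\le p<2^{n+1}$, both of which already carry finite constants, producing $C_p=(2^n-1)^{1-\theta}(2^{n+1}-1)^{\theta}\le 2^{n+1}-1\le 2p-1$. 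Without the doubling identity there is no route to the bound $C_p\le 2p-1$ (nor to any finite $C_p$) in your outline, so as written the proof of \ref{ineq1}, and hence of the theorem, is incomplete.
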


\begin{proof} \ref{ineq1}. Let $2\leq p<\infty.$ We first show that if \ref{ineq1} is valid for $p$, then it is also valid for $2p.$ Assume that \ref{ineq1} holds for $p$ and $H_{1},H_{2}\in \ell_{sch}^{2p}(\widehat{G}).$ Consider $A=H_{1}^{\ast}H_{1}+H_{2}^{\ast}H_{2}$ and $B=H_{1}^{\ast}H_{2}+H_{2}^{\ast}H_{1}$. Hence, it follows from the H\"older inequality (see Proposition \ref{Holder inequality}) that $A, B\in \ell_{sch}^{p}(\widehat{G}).$ 

Since $\lv x\rv_{\mathcal{S}^{p}(\mathcal{H})}=\left(\mathrm{Tr}(\lvert x\rvert^{p})\right)^{1/p},$ $x\in\mathcal{S}^{p}(\mathcal{H}),$ we have
\begin{equation*}\begin{split}
\lv H_{1}+H_{2}\rv_{\ell_{sch}^{2p}(\widehat{G})}^{2p}&=\sum_{[\xi]\in\G}\Dim(\xi)\lv H_{1}(\xi)+H_{2}(\xi)\rv_{\mathcal{S}^{2p}(\mathcal{H}_{\xi})}^{2p}=\sum_{[\xi]\in\G}\Dim(\xi) \mathrm{Tr}\left(\lvert H_{1}(\xi)+H_{2}(\xi)\rvert^{2p}\right)\\
&=\sum_{[\xi]\in\G}\Dim(\xi)\mathrm{Tr}\left(\left[(H_{1}(\xi)+H_{2}(\xi))^{\ast}(H_{1}(\xi)+H_{2}(\xi))\right]^{p}\right)\\
&=\sum_{[\xi]\in\G}\Dim(\xi)\mathrm{Tr}\left((A(\xi)+B(\xi))^{p}\right)=\sum_{[\xi]\in\G}\Dim(\xi)\mathrm{Tr}\left(\lvert A(\xi)+B(\xi)\rvert^{p}\right)\\
&=\sum_{[\xi]\in\G}\Dim(\xi)\lv A(\xi)+B(\xi)\rv_{\mathcal{S}^{p}(\mathcal{H}_{\xi})}^p=\lv A+B\rv_{\ell_{sch}^{p}(\G)}^{p}.
\end{split}\end{equation*}
Similarly, 
$$\lv H_{1}-H_{2}\rv_{\ell_{sch}^{2p}(\widehat{G})}^{2p}=\lv A-B\rv_{\ell_{sch}^{p}(\G)}^{p}.$$
Therefore, 
\begin{equation}\begin{split}\label{equality}
\frac{1}{2}\Bigl(\lv H_{1}+H_{2}\rv_{\ell_{sch}^{2p}(\widehat{G})}^{2p}+\lv H_{1}-H_{2}\rv_{\ell_{sch}^{2p}(\widehat{G})}^{2p}\Bigr)&=\frac{1}{2}\left(\lv A-B\rv_{\ell_{sch}^{p}(\G)}^{p}+\lv A-B\rv_{\ell_{sch}^{p}(\G)}^{p}\right)\\
&\leq \left(\lv A\rv_{\ell_{sch}^{p}(\G)}^2+C_{p}\lv B\rv_{\ell_{sch}^{p}(\G)}^2\right)^{\frac{p}{2}},
\end{split}\end{equation}
where the last inequality follows from \ref{ineq1} that was assumed to be true for $p.$ Note that
\begin{equation}\begin{split}\label{id1}
\lv A\rv_{\ell_{sch}^{p}(\G)}^2&=\lv H_{1}^{\ast}H_{1}+H_{2}^{\ast}H_{2}\rv_{\ell_{sch}^{p}(\G)}^2\leq \left(\lv H_{1}^{\ast}H_{1}\rv_{\ell_{sch}^{p}(\G)}+\lv H_{2}^{\ast}H_{2}\rv_{\ell_{sch}^{p}(\G)}\right)^2\\
&\overset{\eqref{Holder}}{\le} \left(\lv H_{1}^{\ast}\rv_{\ell_{sch}^{2p}(\G)}\lv H_{1}\rv_{\ell_{sch}^{2p}(\G)}+\lv H_{2}^{\ast}\rv_{\ell_{sch}^{2p}(\G)}\lv H_{2}\rv_{\ell_{sch}^{2p}(\G)}\right)^2\overset{\rm{Prop.} \ref{adjoint}}{=} \left(\lv H_{1}\rv_{\ell_{sch}^{2p}(\G)}^2+\lv H_{2}\rv_{\ell_{sch}^{2p}(\G)}^2\right)^2,
\end{split}\end{equation}
and
\begin{equation}\begin{split}\label{id2}
&\lv B\rv_{\ell_{sch}^{p}(\G)}^2=\lv H_{1}^{\ast}H_{2}+H_{2}^{\ast}H_{1}\rv_{\ell_{sch}^{p}(\G)}^2\leq \left(\lv H_{1}^{\ast}H_{2}\rv_{\ell_{sch}^{p}(\G)}+\lv H_{2}^{\ast}H_{1}\rv_{\ell_{sch}^{p}(\G)}\right)^2\\
&\overset{\eqref{Holder}}{\le} \left(\lv H_{1}^{\ast}\rv_{\ell_{sch}^{2p}(\G)}\lv H_{2}\rv_{\ell_{sch}^{2p}(\G)}+\lv H_{2}^{\ast}\rv_{\ell_{sch}^{2p}(\G)}\lv H_{1}\rv_{\ell_{sch}^{2p}(\G)}\right)^2\overset{\rm{Prop.} \ref{adjoint}}{=} 4\lv H_{1}\rv_{\ell_{sch}^{2p}(\G)}^2\lv H_{2}\rv_{\ell_{sch}^{2p}(\G)}^2.
\end{split}\end{equation}

Hence, combining \eqref{id1} and \eqref{id2} together with \eqref{equality}, we have 

\begin{equation*}\begin{split}
&\frac{1}{2}\Bigl(\lv H_{1}+H_{2}\rv_{\ell_{sch}^{2p}(\widehat{G})}^{2p}+\lv H_{1}-H_{2}\rv_{\ell_{sch}^{2p}(\widehat{G})}^{2p}\Bigr)\\
&\leq \left(\left(\lv H_{1}\rv_{\ell_{sch}^{2p}(\G)}^2+\lv H_{2}\rv_{\ell_{sch}^{2p}(\G)}^2\right)^2+4C_{p}\lv H_{1}\rv_{\ell_{sch}^{2p}(\G)}^2\lv H_{2}\rv_{\ell_{sch}^{2p}(\G)}^2\right)^{\frac{p}{2}}\\
&=\left(\lv H_{1}\rv_{\ell_{sch}^{2p}(\G)}^4+\lv H_{2}\rv_{\ell_{sch}^{2p}(\G)}^4+(4C_{p}+2)\lv H_{1}\rv_{\ell_{sch}^{2p}(\G)}^2\lv H_{2}\rv_{\ell_{sch}^{2p}(\G)}^2\right)^{\frac{p}{2}}\\
&\leq \left(\lv H_{1}\rv_{\ell_{sch}^{2p}(\G)}^4+(1+2C_{p})^2\lv H_{2}\rv_{\ell_{sch}^{2p}(\G)}^4+(4C_{p}+2)\lv H_{1}\rv_{\ell_{sch}^{2p}(\G)}^2\lv H_{2}\rv_{\ell_{sch}^{2p}(\G)}^2\right)^{\frac{p}{2}}\\
&\leq \left(\left(\lv H_{1}\rv_{\ell_{sch}^{2p}(\G)}^2+(1+2C_{p})\lv H_{2}\rv_{\ell_{sch}^{2p}(\G)}^2\right)^2\right)^{\frac{p}{2}}=\left(\lv H_{1}\rv_{\ell_{sch}^{2p}(\G)}^2+(1+2C_{p})\lv H_{2}\rv_{\ell_{sch}^{2p}(\G)}^2\right)^{p}.
\end{split}\end{equation*}
Therefore, we proved an analogue of \ref{ineq1} for $2p$ in the form
\begin{equation*}\begin{split}
\left(\frac{1}{2}\Bigl(\lv H_{1}+H_{2}\rv_{\ell_{sch}^{2p}(\widehat{G})}^{2p}+\lv H_{1}-H_{2}\rv_{\ell_{sch}^{2p}(\widehat{G})}^{2p}\Bigr)\right)^{1/2p}\leq\left(\lv H_{1}\rv_{\ell_{sch}^{2p}(\G)}^2+(1+2C_{p})\lv H_{2}\rv_{\ell_{sch}^{2p}(\G)}^2\right)^{\frac{1}{2}},
\end{split}\end{equation*} 
and obviously $C_{2p}\leq 2C_{p}+1.$ Note that $C_{2}=1$ by the parallelogram identity. Hence, it is easy to prove by induction that $C_{2^n}\leq 2 C_{2^{n-1}}+1\leq 2\left(2^{n-1}-1\right)+1=2^{n}-1$.

Now assume that $2\leq p<\infty$, then there exists $n\in\mathbb{N}$ such that $2^{n}\leq p<2^{n+1}.$ Choose $\theta\in(0,1)$ such that $1/p=(1-\theta)/2^{n}+\theta/2^{n+1}$. Since \ref{ineq1} holds for $2^n$ , it follows that an operator $T_{1}$ from $\ell_{sch}^{2^n}(\G)\oplus_{2,2^{n}-1}\ell_{sch}^{2^n}(\G)$ into $\ell_{sch}^{2^n}(\G)\oplus_{2^{n}}\ell_{sch}^{2^n}(\G)$ defined as 
$$T_{1}(H_{1},H_{2})(\xi,\xi):=\left(H_{1}(\xi)+H_{2}(\xi), H_{1}(\xi)-H_{2}(\xi)\right)$$
is bounded with norm 
\begin{equation}\label{norm 1 int}
\lv T_{1}\rv_{\ell_{sch}^{2^n}(\G)\oplus_{2,2^{n}-1}\ell_{sch}^{2^n}(\G)\rightarrow\ell_{sch}^{2^n}(\G)\oplus_{2^{n}}\ell_{sch}^{2^n}(\G)}\leq 2^{1/2^{n}}.    
\end{equation}
Similarly, since \ref{ineq1} holds for $2^{n+1},$ it follows that an operator $T_{2}$ from $\ell_{sch}^{2^{n+1}}(\G)\oplus_{2,2^{n+1}-1}\ell_{sch}^{2^{n+1}}(\G)$ into $\ell_{sch}^{2^{n+1}}(\G)\oplus_{2^{n+1}}\ell_{sch}^{2^{n+1}}(\G)$
defined by the same action as $T_{1}$ is bounded with norm 
\begin{equation}\label{norm 2 int}
\lv T_{2}\rv_{\ell_{sch}^{2^{n+1}}(\G)\oplus_{2,2^{n+1}-1}\ell_{sch}^{2^{n+1}}(\G)\rightarrow\ell_{sch}^{2^{n+1}}(\G)\oplus_{2^{n+1}}\ell_{sch}^{2^{n+1}}(\G)}\leq 2^{\frac{1}{2^{n+1}}}. 
\end{equation}

On the other hand, by Proposition \ref{calderon interpolation 2}, one has 
\begin{equation}\label{norm 3 int}
\left(\ell_{sch}^{2^n}(\G)\oplus_{2,2^{n}-1}\ell_{sch}^{2^n}(\G), \ell_{sch}^{2^{n+1}}(\G)\oplus_{2,2^{n+1}-1}\ell_{sch}^{2^{n+1}}(\G)\right)_{\theta}=\ell_{sch}^{p}(\G)\oplus_{2, C_{p}}\ell_{sch}^{p}(\G),
\end{equation}
where $C_{p}=(2^{n}-1)^{1-\theta}(2^{n+1}-1)^{\theta}.$ Moreover, by Proposition \ref{calderon interpolation 1}, one has 
\begin{equation}\label{norm 4 int}
\left(\ell_{sch}^{2^n}(\G)\oplus_{2^{n}}\ell_{sch}^{2^n}(\G), \ell_{sch}^{2^{n+1}}(\G)\oplus_{2^{n+1}}\ell_{sch}^{2^{n+1}}(\G)\right)_{\theta}=\ell_{sch}^{p}(\G)\oplus_{p}\ell_{sch}^{p}(\G).
\end{equation}
Therefore, by \eqref{norm 3 int}, \eqref{norm 4 int} and the general interpolation result \cite[Corollary 5.5.4]{lofstrom1976interpolation} together with \eqref{norm 1 int}, \eqref{norm 2 int}, we have that an operator $T$ from $\ell_{sch}^{p}(\G)\oplus_{2, C_{p}}\ell_{sch}^{p}(\G)$ into $\ell_{sch}^{p}(\G)\oplus_{p}\ell_{sch}^{p}(\G)$ defined with same action as operator $T_{1}$ and $T_{2}$ is bounded for any $2\leq p<\infty$ with norm
$$\lv T\rv_{\ell_{sch}^{p}(\G)\oplus_{2, C_{p}}\ell_{sch}^{p}(\G)\rightarrow\ell_{sch}^{p}(\G)\oplus_{p}\ell_{sch}^{p}(\G)}\leq \left(2^{1/2^{n}}\right)^{1-\theta} \left(2^{1/2^{n+1}}\right)^{\theta}=2^{1/p},$$
which proves \ref{ineq1}. Moreover, 
$$C_{p}=(2^{n}-1)^{1-\theta}(2^{n+1}-1)^{\theta}\leq (2^{n+1}-1)^{1-\theta}(2^{n+1}-1)^{\theta}=2^{n+1}-1\leq 2p-1.$$

\ref{ineq2}. First note, by replacing $H_{1}$ and $H_{2}$ with $H_{1}+H_{2}$ and $H_{1}-H_{2}$, respectively, \ref{ineq2} is equivalent to the following 
\begin{equation}\begin{split}\label{new enough}
    \Bigl(\lv H_{1}+H_{2}\psrv^2+c_{p}\lv H_{1}-H_{2}\psrv^2\Bigr)^{1/2}\leq 2^{1/q}\left(\lv H_{1}\psrv^{p}+\lv H_{2}\psrv^{p}\right)^{1/p},
    \end{split}\end{equation}
In order to prove \ref{ineq2}, we prove \eqref{new enough}.

Define the map $S$ from $\ell_{sch}^{p}(\G)\oplus_{p}\ell_{sch}^{p}(\G)$ into $\ell_{sch}^{p}(\G)\oplus_{2, c_{p}}\ell_{sch}^{p}(\G)$ by setting
$$S(H_{1},H_{2})(\xi,\xi):=\left(H_{1}(\xi)+H_{2}(\xi),H_{1}(\xi)-H_{2}(\xi)\right).$$ 
Note that it is enough to show that 
\begin{equation}\label{enough to prove 2}
\lv S\rv_{\ell_{sch}^{p}(\G)\oplus_{p}\ell_{sch}^{p}(\G)\rightarrow \ell_{sch}^{p}(\G)\oplus_{2, c_{p}}\ell_{sch}^{p}(\G)}\leq 2^{1/q}.    
\end{equation}

By the similar argument as in the proof of Proposition \ref{new Clarkson type inequality}\ref{new first inequality}, it is easy to show that the adjoint operator $S^{\ast}$ from $\left(\ell_{sch}^{p}(\G)\oplus_{2, c_{p}}\ell_{sch}^{p}(\G)\right)^{\ast}$ into $\left(\ell_{sch}^{p}(\G)\oplus_{p}\ell_{sch}^{p}(\G)\right)^{\ast}$ is defined as 
$$S(H_{1}, H_{2})(\xi,\xi)=\left(H_{1}(\xi)+H_{2}(\xi), H_{1}(\xi)-H_{2}(\xi)\right).$$

Furthermore, by Proposition \ref{duality of the direct sum}, one has 
\begin{equation*}\begin{split}
\left(\ell_{sch}^{p}(\G)\oplus_{2, c_{p}}\ell_{sch}^{p}(\G)\right)^{\ast}&=\ell_{sch}^{q}(\G)\oplus_{2, \frac{1}{c_{p}}}\ell_{sch}^{q}(\G),\\
\left(\ell_{sch}^{p}(\G)\oplus_{p}\ell_{sch}^{p}(\G)\right)^{\ast}&=\ell_{sch}^{q}(\G)\oplus_{q}\ell_{sch}^{q}(\G).
\end{split}\end{equation*}
By \ref{ineq1}, we have 
\begin{equation*}\begin{split}
\lv S\rv&_{\ell_{sch}^{p}(\G)\oplus_{p}\ell_{sch}^{p}(\G)\rightarrow \ell_{sch}^{p}(\G)\oplus_{2, c_{p}}\ell_{sch}^{p}(\G)}=\lv S^{\ast}\rv_{\left(\ell_{sch}^{p}(\G)\oplus_{2, c_{p}}\ell_{sch}^{p}(\G)\right)^{\ast}\rightarrow\left(\ell_{sch}^{p}(\G)\oplus_{p}\ell_{sch}^{p}(\G)\right)^{\ast}}\\
&=\lv S^{\ast}\rv_{\ell_{sch}^{q}(\G)\oplus_{2, \frac{1}{c_{p}}}\ell_{sch}^{q}(\G)\rightarrow\ell_{sch}^{q}(\G)\oplus_{q}\ell_{sch}^{q}(\G)}= \lv S^{\ast}\rv_{\ell_{sch}^{q}(\G)\oplus_{2, C_{q}}\ell_{sch}^{q}(\G)\rightarrow\ell_{sch}^{q}(\G)\oplus_{q}\ell_{sch}^{q}(\G)}\leq 2^{1/q},
\end{split}\end{equation*}
which proves \eqref{enough to prove 2}. 
Moreover, 
$$c_{p}=\frac{1}{C_{q}}\geq \frac{1}{2q-1}\geq \frac{p-1}{p+1},$$
completing the proof.
\end{proof}

Theorem \ref{mainth2} provides the following optimal rate estimates for $\delta_{\ell_{sch}^{p}(\widehat{G})}(\varepsilon)$ when $1<p<2$ and for $\rho_{\ell_{sch}^{p}(\widehat{G})}(t)$ when $2<p<\infty$ in a sense that together with \eqref{optimal rate in general}, one has $\delta_{\ell_{sch}^{p}(\G)}(\varepsilon)=O(\varepsilon^2),$ $0<\varepsilon<2,$ when $1<p<2$ and $\rho_{\ell_{sch}^{p}(\G)}(t)=O(t^2),$ $t>0,$ when $2<p<\infty$, similar to the ones in the classical case of Lebesgue spaces.

\begin{corollary}\label{optimal corollary}
    \begin{enumerate}[label=(\roman*)]
        \item If $1<p\leq 2,$ then for any $0<\varepsilon<2$, one has
        $$\delta_{\ell_{sch}^{p}(\widehat{G})}(\varepsilon)\geq \frac{c_{p}}{8}\varepsilon^2.$$
        \item If $2\leq p<\infty,$ then for any $t>0$, one has
        $$\rho_{\ell_{sch}^{p}(\widehat{G})}(t)\leq \frac{C_{p}}{2}t^2.$$
    \end{enumerate}
\end{corollary}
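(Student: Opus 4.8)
The plan is to read off both estimates directly from the sharpened Clarkson inequalities of Theorem \ref{mainth2}, combined with the square-root form of Bernoulli's inequality $(1\pm u)^{1/2}\leq 1\pm\frac{u}{2}$ already exploited in Lemma \ref{lemma0}. For part (i), fix $1<p\leq 2$ and let $H_{1},H_{2}\in\ell_{sch}^{p}(\G)$ satisfy $\lv H_{1}\psrv=\lv H_{2}\psrv=1$ and $\lv H_{1}-H_{2}\psrv=\varepsilon$. I would first recast Theorem \ref{mainth2}\ref{ineq2} into the equivalent form obtained by replacing $H_{1},H_{2}$ with $\frac{H_{1}+H_{2}}{2},\frac{H_{1}-H_{2}}{2}$, namely
$$\left(\lv \frac{H_{1}+H_{2}}{2}\psrv^{2}+c_{p}\lv \frac{H_{1}-H_{2}}{2}\psrv^{2}\right)^{1/2}\leq \left(\frac{1}{2}\left(\lv H_{1}\psrv^{p}+\lv H_{2}\psrv^{p}\right)\right)^{1/p}.$$
Under the normalization the right-hand side equals $1$ and $\lv\frac{H_{1}-H_{2}}{2}\psrv=\frac{\varepsilon}{2}$, so squaring gives $\lv\frac{H_{1}+H_{2}}{2}\psrv^{2}\leq 1-c_{p}\frac{\varepsilon^{2}}{4}$. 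Since the left-hand side is nonnegative, $c_{p}\frac{\varepsilon^{2}}{4}\in[0,1]$, so Bernoulli's inequality applies and yields $\lv\frac{H_{1}+H_{2}}{2}\psrv\leq 1-c_{p}\frac{\varepsilon^{2}}{8}$. Rearranging into $1-\lv\frac{H_{1}+H_{2}}{2}\psrv\geq\frac{c_{p}}{8}\varepsilon^{2}$ and taking the infimum over all admissible $H_{1},H_{2}$ gives $\delta_{\ell_{sch}^{p}(\G)}(\varepsilon)\geq\frac{c_{p}}{8}\varepsilon^{2}$.

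For part (ii), fix $2\leq p<\infty$ and let $\lv H_{1}\psrv=\lv H_{2}\psrv=1$. Since $u\mapsto u^{p}$ is convex for $p\geq1$, the arithmetic mean is dominated by the $p$-power mean, which together with Theorem \ref{mainth2}\ref{ineq1} applied to $H_{1}$ and $tH_{2}$ gives
$$\frac{\lv H_{1}+tH_{2}\psrv+\lv H_{1}-tH_{2}\psrv}{2}\leq \left(\frac{\lv H_{1}+tH_{2}\psrv^{p}+\lv H_{1}-tH_{2}\psrv^{p}}{2}\right)^{1/p}\leq \left(1+C_{p}t^{2}\right)^{1/2}.$$
Applying Bernoulli's inequality once more bounds the last term by $1+\frac{C_{p}}{2}t^{2}$, so subtracting $1$ and taking the supremum over admissible $H_{1},H_{2}$ yields $\rho_{\ell_{sch}^{p}(\G)}(t)\leq\frac{C_{p}}{2}t^{2}$.

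The whole argument is a short chain of inequalities once Theorem \ref{mainth2} is available, so I do not expect a genuinely hard step. The only two points needing care are the bookkeeping of the substitution turning \ref{ineq2} into a statement about half-sums and half-differences (and, in part (ii), inserting $tH_{2}$ correctly so that the weight $C_{p}$ picks up the factor $t^{2}$), and verifying that the arguments of the square roots lie in the range where the Bernoulli bounds are valid — this is automatic for $u=C_{p}t^{2}\geq0$ in part (ii), and in part (i) it is forced by the nonnegativity of $\lv\frac{H_{1}+H_{2}}{2}\psrv^{2}$, which guarantees $c_{p}\frac{\varepsilon^{2}}{4}\leq1$ without needing any separate bound on $c_{p}$.
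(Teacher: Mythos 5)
Your proposal is correct and follows essentially the same route as the paper: both parts are read off from Theorem \ref{mainth2} via the substitution turning \ref{ineq2} into a statement about $\frac{H_{1}+H_{2}}{2}$ and $\frac{H_{1}-H_{2}}{2}$ in part (i), and via the power-mean (concavity) step plus Bernoulli's inequality in part (ii). The only cosmetic difference is in part (i), where the paper passes from $1-\lv\frac{H_{1}+H_{2}}{2}\psrv^{2}$ to $1-\lv\frac{H_{1}+H_{2}}{2}\psrv$ by factoring the difference of squares and bounding the factor $1+\lv\frac{H_{1}+H_{2}}{2}\psrv$ by $2$, whereas you apply Bernoulli's inequality to the square root; both yield the same constant $\frac{c_{p}}{8}$.
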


\begin{proof}
(i). Let $1<p\leq 2$. Then, by \ref{ineq2} of Theorem \ref{mainth2}, one has 
$$\Bigl(\lv K_{1}\prv^2+c_{p}\lv K_{2}\psrv^2\Bigr)^{1/2}
    \leq \left(\frac{1}{2}\left(\lv K_{1}+K_{2}\psrv^{p}+\lv K_{1}-K_{2}\psrv^{p}\right)\right)^{1/p}$$
for $K_{1},K_{2}\in \ell_{sch}^{p}(\widehat{G}).$ Denote $H_{1}=K_{1}+K_{2}\in \ell_{sch}^{p}(\widehat{G})$ and $H_{2}=K_{1}-K_{2}\in \ell_{sch}^{p}(\widehat{G}).$ Hence, the last inequality can be rewritten as 
\begin{equation*}\begin{split}
\Bigl(\lv \frac{H_{1}+H_{2}}{2}\psrv^2+c_{p}\lv \frac{H_{1}-H_{2}}{2}\psrv^2\Bigr)^{1/2}
\leq \left(\frac{1}{2}\left(\lv H_{1}\psrv^{p}+\lv H_{2}\psrv^{p}\right)\right)^{1/p}.
\end{split}\end{equation*}
Assuming that $\lv H_{1}\psrv=\lv H_{2}\psrv=1$ and $\lv H_{1}-H_{2}\psrv=\varepsilon,$ it follows that
$$\left(\lv \frac{H_{1}+H_{2}}{2}\psrv^2+\frac{c_{p}}{4}\varepsilon^2\right)^{1/2}\leq 1.$$
Therefore,
$$\frac{c_{p}}{4}\varepsilon^2\leq 1-\lv \frac{H_{1}+H_{2}}{2}\psrv^2=\left(1-\lv \frac{H_{1}+H_{2}}{2}\psrv\right)\left(1+\lv \frac{H_{1}+H_{2}}{2}\psrv\right)$$
$$\leq \left(1-\lv \frac{H_{1}+H_{2}}{2}\psrv\right)\left(1+\frac{\lv H_{1}\psrv+\lv H_{2}\psrv}{2}\right)=2\left(1-\lv \frac{H_{1}+H_{2}}{2}\psrv\right).$$
Hence, taking infimum over all $H_{1}, H_{2}\in \ell_{sch}^{p}(\widehat{G})$ with norm one and $\lv H_{1}-H_{2}\psrv=\varepsilon$, we finally have
$$\delta_{\ell_{sch}^{p}(\widehat{G})}\geq \frac{c_{p}}{8}\varepsilon^2$$

(ii). Let $2\leq p<\infty$ and $H_{1}, H_{2}\in \ell_{sch}^{p}(\widehat{G})$. Define a function $f(x)=x^{1/p},$ $x\geq 0.$ Assume that $x_{1}=\lv H_{1}+tH_{2}\psrv^{p}$ and $x_{2}=\lv H_{1}-tH_{2}\psrv^{p}$. Hence, since $f$ is a concave function, it follows that 

\begin{equation}\begin{split}\label{neweq1}
&\frac{\lv H_{1}+tH_{2}\psrv+\lv H_{1}-tH_{2}\psrv}{2}=\frac{f(x_{1})+f(x_{2})}{2}\\
&\leq f\left(\frac{x_{1}+x_{2}}{2}\right)=\left(\frac{\lv H_{1}+tH_{2}\psrv^{p}+\lv H_{1}-tH_{2}\psrv^p}{2}\right)^{1/p}. 
\end{split}\end{equation}
Moreover, by Theorem \ref{mainth2} \ref{ineq1}, one has
\begin{equation}\label{neweq2}
\left(\frac{\lv H_{1}+tH_{2}\psrv^{p}+\lv H_{1}-tH_{2}\psrv^p}{2}\right)^{1/p}\leq \left(\lv H_{1}\psrv^2+C_{p}\lv t H_{2}\psrv^2\right)^{1/2}.
\end{equation}
Assuming $\lv H_{1}\psrv=\lv H_{2}\psrv=1$, and combining \eqref{neweq1} together with \eqref{neweq2}, we have 

\begin{equation*}\begin{split}
\frac{\lv H_{1}+tH_{2}\psrv+\lv H_{1}-tH_{2}\psrv}{2}-1\leq \left(1+C_{p} t^2\right)^{1/2}-1\leq 1+ \frac{C_{p}}{2}t^2-1=\frac{C_{p}}{2}t^2,   
\end{split}\end{equation*} where the last inequality follows from the Bernoulli's inequality. 

Therefore, taking supremum over all $H_{1}, H_{2}\in \ell_{sch}^{p}(\widehat{G})$ with norm one, we finally have that
$$\rho_{\ell_{sch}^{p}(\widehat{G})}(t)\leq \frac{C_{p}}{2}t^{2},$$
completing the proof.
\end{proof}

One of the applications of above given estimates in Lemma \ref{lemma0} and Corollary \ref{optimal corollary} is the following noncommutative extension of a classical theorem of Orlicz concerning unconditionally convergent series (see, for example, \cite[Theorem 1.e.11]{lindenstrauss2013classical}). 

Let $X$ be a Banach space and $\{x_{n}\}_{n\geq 1}$ be a sequence of elements from $X$. We recall that a series $\sum_{j=1}^{\infty}x_{j}$ is unconditionally convergent in $X$, if $\sum_{j=1}^{\infty}\theta_{j}x_{j}$ converges in $X$ for all choices of signs $\theta_{j}=\pm1,$ $j\geq 1.$

\begin{corollary}
If $\sum\limits_{j=1}^{\infty}H_{j}$ is an unconditionally convergent series in $\ell_{sch}^{p}(\G),$ $1<p<\infty$, then
$$\sum_{j=1}^{\infty}\lv H_{j}\rv_{\ell_{sch}^{p}(\G)}^{max\{p,2\}}<\infty.$$
\end{corollary}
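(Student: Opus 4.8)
The plan is to derive the estimate from the cotype of $\ell_{sch}^{p}(\G)$, which itself follows from the power-type lower bounds on the modulus of convexity established above. Throughout write $q_{0}=\max\{p,2\}$, so that $q_{0}=2$ when $1<p\le 2$ and $q_{0}=p$ when $2\le p<\infty$.

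First I would observe that $\ell_{sch}^{p}(\G)$ has modulus of convexity of power type $q_{0}$. For $1<p\le 2$ this is Corollary \ref{optimal corollary}(i), which gives $\delta_{\ell_{sch}^{p}(\G)}(\varepsilon)\ge \tfrac{c_{p}}{8}\varepsilon^{2}$, whence power type $2=q_{0}$; for $2<p<\infty$ it is Lemma \ref{lemma0}\ref{lemma0 inequality 2}, which gives $\delta_{\ell_{sch}^{p}(\G)}(\varepsilon)\ge \tfrac{\varepsilon^{p}}{p\,2^{p}}$, whence power type $p=q_{0}$. By the classical principle relating the modulus of convexity to cotype (\cite[Theorem 1.e.16]{lindenstrauss2013classical}; see also \cite{Figiel1976}), a Banach space whose modulus of convexity is of power type $q_{0}$ has cotype $q_{0}$. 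Thus, in the notation of Definition \ref{type def}, there is a constant $M>0$ such that
\[
\left(\sum_{j=1}^{n}\lv H_{j}\psrv^{q_{0}}\right)^{1/q_{0}}\le M\int_{0}^{1}\lv \sum_{j=1}^{n}r_{j}(t)H_{j}\psrv\,dt
\]
for every finite family $H_{1},\dots,H_{n}\in\ell_{sch}^{p}(\G)$.

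Next I would bring in the hypothesis. Since $\sum_{j}H_{j}$ converges unconditionally in $\ell_{sch}^{p}(\G)$, the standard characterisation of unconditional convergence (see e.g. \cite[Section 6.2]{albiac2006topics}) guarantees that the signed partial sums are uniformly bounded, i.e.
\[
C:=\sup_{n\ge 1}\ \sup_{\theta_{j}=\pm1}\lv \sum_{j=1}^{n}\theta_{j}H_{j}\psrv<\infty,
\]
since $C$ does not exceed twice the supremum of $\lv \sum_{j\in F}H_{j}\psrv$ taken over finite sets $F$, which is finite for an unconditionally convergent series. Averaging over signs then gives, for each $n$,
\[
\int_{0}^{1}\lv \sum_{j=1}^{n}r_{j}(t)H_{j}\psrv\,dt=\frac{1}{2^{n}}\sum_{\theta_{j}=\pm1}\lv \sum_{j=1}^{n}\theta_{j}H_{j}\psrv\le C.
\]
Combining the two displays yields $\big(\sum_{j=1}^{n}\lv H_{j}\psrv^{q_{0}}\big)^{1/q_{0}}\le MC$ for every $n$, and letting $n\to\infty$ gives $\sum_{j=1}^{\infty}\lv H_{j}\psrv^{q_{0}}\le (MC)^{q_{0}}<\infty$, which is the claim. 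The only substantial ingredient is the passage from the power-type modulus of convexity to cotype $q_{0}$; once the bounds of Lemma \ref{lemma0} and Corollary \ref{optimal corollary} are in hand this is exactly \cite[Theorem 1.e.16]{lindenstrauss2013classical}, and I expect the one point that needs care is checking that the quoted theorem is applied with the correct exponent $q_{0}=\max\{p,2\}$ in each of the two regimes.
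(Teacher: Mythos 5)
Your proof is correct, but it takes a genuinely different route from the paper's. The paper's argument is built on Kadec's theorem \cite[Theorem 1]{kadec1956unconditional}: for an unconditionally convergent series in a uniformly convex space $X$ one has $\sum_{j}\delta_{X}\left(\lv x_{j}\rv_{X}\right)<\infty$, and combining this with the lower bounds $\delta_{\ell_{sch}^{p}(\G)}(\varepsilon)\geq \frac{c_{p}}{8}\varepsilon^{2}$ for $1<p\leq 2$ (Corollary \ref{optimal corollary}) and $\delta_{\ell_{sch}^{p}(\G)}(\varepsilon)\geq \frac{\varepsilon^{p}}{p\cdot 2^{p}}$ for $2<p<\infty$ (Lemma \ref{lemma0}\ref{lemma0 inequality 2}) gives the conclusion immediately. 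You instead route the argument through cotype: you feed the very same two modulus-of-convexity estimates into the Figiel--Pisier theorem \cite[Theorem 1.e.16]{lindenstrauss2013classical}, \cite{Figiel1976} to get cotype $\max\{p,2\}$, and then finish by the standard uniform boundedness of the signed partial sums of an unconditionally convergent series together with averaging over signs. Both routes are sound and both ultimately rest on the same estimates; the paper's is shorter because Kadec's theorem packages the sign-averaging step for you, while yours has the merit of making the role of cotype explicit. One remark: within this paper you do not actually need the external citation of \cite[Theorem 1.e.16]{lindenstrauss2013classical}, since Theorem \ref{type} already establishes the cotype inequality for $\ell_{sch}^{p}(\G)$ with the explicit constants $c_{p}$ and $C_{p}$ (stated for the $L^{2}$-average, which Remark \ref{different averages of type} converts to the $L^{1}$-average you use); quoting that result instead would keep your proof entirely self-contained and avoid any worry about matching the exponent conventions of the quoted theorem.
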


\begin{proof}
Let us first recall the theorem of Kadec \cite[Theorem 1]{kadec1956unconditional}, which says that if $(X, \lv \cdot\rv_{X})$ is a uniformly convex Banach space with modulus of convexity $\delta_{X}(\cdot)$ and if $\sum\limits_{j=1}^{\infty}x_{j}$ is unconditionally convergent, then 
$$\sum_{j=1}^{\infty}\delta_{X}\left(\lv x_{j}\rv_{X}\right)<\infty.$$

By Theorem \ref{mainth0}, we have that $\ell_{sch}^{p}(\G)$ is uniformly convex for $1<p<\infty.$ Therefore, by Kadec's Theorem, one has 
\begin{equation*}
\sum_{j=1}^{\infty}\delta_{\ell_{sch}^{p}(\G)}\left(\lv H_{j}\rv_{\ell_{sch}^{p}(\G)}\right)<\infty
\end{equation*} for $\{H_{j}\}_{j\geq 1}\subset\ell_{sch}^{p}(\G).$
Hence, by Corollary \ref{optimal corollary}, one has 
\begin{equation}\label{kadcoroleq1}
\sum_{j=1}^{\infty}\frac{c_{p}}{8}\lv H_{j}\rv_{\ell_{sch}^{p}(\G)}^2\leq\sum_{j=1}^{\infty}\delta_{\ell_{sch}^{p}(\G)}\left(\lv H_{j}\rv_{\ell_{sch}^{p}(\G)}\right)<\infty,
\end{equation} for $1<p\leq 2$, where $c_{p}$ is the constant from Theorem \ref{mainth2} \ref{ineq2}.
Moreover, by Lemma \ref{lemma0} \ref{lemma0 inequality 2}, one has 
\begin{equation}\label{kadcoroleq2}
\sum_{j=1}^{\infty}\frac{1}{p\cdot 2^{p}}\lv H_{j}\rv_{\ell_{sch}^{p}(\G)}^{p}\leq\sum_{j=1}^{\infty}\delta_{\ell_{sch}^{p}(\G)}\left(\lv H_{j}\rv_{\ell_{sch}^{p}(\G)}\right)<\infty,
\end{equation} for $2<p< \infty.$
Therefore, combining \eqref{kadcoroleq1} and \eqref{kadcoroleq2}, we finally have 
$$\sum_{j=1}^{\infty}\lv H_{j}\psrv^{\max\{2,p\}}<\infty,$$
for $1<p<\infty.$
\end{proof}

Finally, by Proposition \ref{new Clarkson type inequality} and Theorem \ref{mainth2}, we can identify the type and cotype (cf. Definition \ref{type def}) of the Banach space $\ell_{sch}^{p}(\G)$ for $1<p<\infty$. Note that the result is analogous with the result for classical $\ell^{p}$-spaces.

\begin{theorem}\label{type}
Let $1<p<\infty.$ Then $\ell_{sch}^{p}(\G)$ is of type $\min\{2,p\}$ and cotype $\max\{2,p\}$. In particular, let $\{H_{j}\}_{j=1}^{n}\subset\ell_{sch}^{p}(\G)$ be a finite sequence. If $1<p\leq 2$, then
\begin{equation}\label{first type inequality}
\sqrt{c_{p}}\left(\sum_{j=1}^{n}\lv H_{j}\rv_{\ell_{sch}^{p}(\G)}^2\right)^{\frac{1}{2}}\leq \left(\int_{0}^{1}\lv \sum_{j=1}^{n}r_{j}(t)H_{j}\rv_{\ell_{sch}^{p}(\G)}^2 dt \right)^{1/2}\leq \left(\sum_{j=1}^{n}\lv H_{j}\rv_{\ell_{sch}^{p}(\G)}^{p}\right)^{\frac{1}{p}},
\end{equation} where $c_{p}$ is the constant given in Theorem \ref{mainth2} \ref{ineq2}.
If $2\leq p<\infty$, then
\begin{equation}\label{second type inequality}
\left(\sum_{j=1}^{n}\lv H_{j}\rv_{\ell_{sch}^{p}(\G)}^p\right)^{\frac{1}{p}}\leq \left(\int_{0}^{1}\lv \sum_{j=1}^{n}r_{j}(t)H_{j}\rv_{\ell_{sch}^{p}(\G)}^2 dt \right)^{1/2}\leq \sqrt{C_{p}}\left(\sum_{j=1}^{n}\lv H_{j}\rv_{\ell_{sch}^{p}(\G)}^{2}\right)^{\frac{1}{2}},
\end{equation} where $C_{p}$ is the constant given in Theorem \ref{mainth2} \ref{ineq1}.
\end{theorem}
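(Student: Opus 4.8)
The plan is to prove the four one‑sided inequalities in \eqref{first type inequality} and \eqref{second type inequality} one at a time, reading type $\min\{2,p\}$ off the upper bounds on the Rademacher average and cotype $\max\{2,p\}$ off the lower bounds, and then passing to the $L^{1}$‑average of Definition \ref{type def} through the Kahane equivalence of Remark \ref{different averages of type}. Writing $S_{k}=\sum_{j=1}^{k}r_{j}(t)H_{j}$ and using that the Rademacher functions realize the uniform sign measure, the key device is the parallelogram‑type recursion obtained by integrating out the last variable $r_{n}$ (with $H_{n}$ deterministic):
\[\int_{0}^{1}\lv S_{n}\psrv^{2}\,dt=\int_{0}^{1}\tfrac{1}{2}\Bigl(\lv S_{n-1}+H_{n}\psrv^{2}+\lv S_{n-1}-H_{n}\psrv^{2}\Bigr)\,dt.\]
I would split the work into the two \emph{Hilbertian} directions, carrying the constants $\sqrt{c_{p}}$ and $\sqrt{C_{p}}$, which iterate this recursion and come from Theorem \ref{mainth2}, and the two \emph{genuinely $\ell^{p}$} directions, carrying the constant $1$, which come from Proposition \ref{new Clarkson type inequality}.

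For the lower bound in \eqref{first type inequality} ($1<p\le2$) and the upper bound in \eqref{second type inequality} ($2\le p<\infty$), I would first turn Theorem \ref{mainth2} into power‑type‑$2$ estimates via the power‑mean inequality. When $2\le p<\infty$, applying $\bigl(\tfrac{a^{2}+b^{2}}{2}\bigr)^{1/2}\le\bigl(\tfrac{a^{p}+b^{p}}{2}\bigr)^{1/p}$ to Theorem \ref{mainth2}\ref{ineq1} yields the $2$‑smoothness estimate
\[\tfrac{1}{2}\Bigl(\lv H_{1}+H_{2}\psrv^{2}+\lv H_{1}-H_{2}\psrv^{2}\Bigr)\le\lv H_{1}\psrv^{2}+C_{p}\lv H_{2}\psrv^{2},\]
and when $1<p\le2$ the reverse power‑mean inequality applied to Theorem \ref{mainth2}\ref{ineq2} yields the $2$‑convexity estimate
\[\lv H_{1}\psrv^{2}+c_{p}\lv H_{2}\psrv^{2}\le\tfrac{1}{2}\Bigl(\lv H_{1}+H_{2}\psrv^{2}+\lv H_{1}-H_{2}\psrv^{2}\Bigr).\]
Substituting $H_{1}=S_{n-1}$, $H_{2}=H_{n}$ into the recursion gives $\int_{0}^{1}\lv S_{n}\psrv^{2}\,dt\le\int_{0}^{1}\lv S_{n-1}\psrv^{2}\,dt+C_{p}\lv H_{n}\psrv^{2}$ and the matching lower bound with $c_{p}$. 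Iterating from $n=1$, and noting that we may assume $C_{p}\ge1$ and $c_{p}\le1$ (replacing $C_{p}$ by $\max\{C_{p},1\}$, resp.\ $c_{p}$ by $\min\{c_{p},1\}$, only weakens these one‑sided bounds and preserves the estimates $C_{p}\le2p-1$, $c_{p}\ge\frac{p-1}{p+1}$), produces $\int_{0}^{1}\lv S_{n}\psrv^{2}\,dt\le C_{p}\sum_{j}\lv H_{j}\psrv^{2}$ and $\ge c_{p}\sum_{j}\lv H_{j}\psrv^{2}$, which are exactly the two claimed inequalities.

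The upper bound in \eqref{first type inequality} (type $p$, $1<p\le2$) and the lower bound in \eqref{second type inequality} (cotype $p$, $2\le p<\infty$) cannot be reached through this recursion, since the $L^{2}$‑average only generates squares while the $\ell^{p}$ side needs $p$‑th powers. I would instead read the type‑$p$ bound as the statement that the Rademacher operator $R_{n}(H_{1},\dots,H_{n})=\sum_{j=1}^{n}r_{j}(\cdot)H_{j}$, mapping the $\ell^{p}$‑direct sum of $n$ copies of $\ell_{sch}^{p}(\G)$ (with norm $(\sum_{j}\lv H_{j}\psrv^{p})^{1/p}$) into $L^{2}\bigl([0,1];\ell_{sch}^{p}(\G)\bigr)$, has norm at most $1$. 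At the endpoint $p=1$ this is the pointwise triangle inequality in $t$; at $p=2$ it is an equality, because $\ell_{sch}^{2}(\G)$ is a Hilbert space and $\int_{0}^{1}r_{j}r_{k}=\delta_{jk}$ gives $\int_{0}^{1}\lv\sum_{j}r_{j}H_{j}\rv_{\ell_{sch}^{2}(\G)}^{2}\,dt=\sum_{j}\lv H_{j}\rv_{\ell_{sch}^{2}(\G)}^{2}$. Complex interpolation with $\tfrac{1}{p}=(1-\theta)+\tfrac{\theta}{2}$ then transports the bound to $1<p<2$: the domain interpolates to the $\ell^{p}$‑direct sum by (iterated) Proposition \ref{calderon interpolation 1} together with Proposition \ref{Schatten interpolation}, the target interpolates to $L^{2}\bigl([0,1];\ell_{sch}^{p}(\G)\bigr)$ by vector‑valued complex interpolation, and since both endpoint norms are $\le1$ so is $\lv R_{n}\rv$; its $n=2$ case is literally Proposition \ref{new Clarkson type inequality}. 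The cotype‑$p$ bound for $2\le p<\infty$ I would then obtain by passing to adjoints exactly as Proposition \ref{new Clarkson type inequality}\ref{new second inequality} was deduced from \ref{new first inequality}, using $\bigl(\ell_{sch}^{p}(\G)\bigr)^{\ast}\cong\ell_{sch}^{q}(\G)$ (Proposition \ref{duality of lp}) and the duality of direct sums (Proposition \ref{duality of the direct sum}).

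The main obstacle is precisely this pair of constant‑$1$ inequalities. Because $\ell_{sch}^{p}(\G)$ is $2$‑uniformly convex and $2$‑uniformly smooth only at $p=2$, the one‑variable recursion driving the $\sqrt{c_{p}}$ and $\sqrt{C_{p}}$ bounds is structurally unable to produce the $p$‑th powers on the $\ell^{p}$ side, and a two‑point inequality such as Proposition \ref{new Clarkson type inequality} does not iterate to $n$ vectors with constant $1$ when $p\neq2$; this is what forces the global interpolation (and dual) argument. The delicate point there is that the constant must remain exactly $1$, which it does only because the two endpoints $p\in\{1,2\}$ give operator norm $\le1$ and the conjugacy $\tfrac{1}{p}+\tfrac{1}{q}=1$ matches the exponents of the direct sums under the duality of Propositions \ref{duality of lp} and \ref{duality of the direct sum}. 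Assembling the four inequalities and invoking Remark \ref{different averages of type} then gives type $\min\{2,p\}$ and cotype $\max\{2,p\}$.
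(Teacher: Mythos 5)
Your proposal is correct in substance, and for the two ``Hilbertian'' inequalities (the $\sqrt{c_p}$ lower bound in \eqref{first type inequality} and the $\sqrt{C_p}$ upper bound in \eqref{second type inequality}) it is essentially the paper's argument in a mildly streamlined form: the paper also integrates out the last Rademacher variable to get the recursion \eqref{typeq1}, but it keeps the $L^{p}$-average of the sums and closes the induction with the (reverse) Minkowski inequality in $L^{p/2}[0,1]$, whereas you first convert Theorem \ref{mainth2} into a genuine $2$-estimate by the power-mean inequality and then run a purely linear recursion in $\int_0^1\lv S_k\psrv^2\,dt$; both are fine, and your variant avoids the $L^{p/2}$ quasi-norm manipulations. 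Where you genuinely diverge is on the two constant-$1$ inequalities. Your route --- endpoint bounds for the Rademacher operator at $p=1$ (triangle inequality) and $p=2$ (Hilbert-space orthogonality), complex interpolation of the $n$-fold direct sums and of $L^2([0,1];\cdot)$, then duality for the cotype-$p$ half --- is sound, but it leans on two facts the paper never establishes (an $n$-fold version of Proposition \ref{calderon interpolation 1}, which does not follow by literally iterating the stated two-summand version since that is phrased for $X\oplus_p X$ with a single $X$, and the vector-valued identity $(L^2(X_0),L^2(X_1))_\theta=L^2((X_0,X_1)_\theta)$); both are standard Calder\'on results, so this is a matter of citation rather than a gap. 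However, your structural claim that the two-point Clarkson inequality ``does not iterate to $n$ vectors with constant $1$ when $p\neq 2$'' is false, and it is precisely the point of the paper's proof that it does: the trick you missed is to measure the Rademacher average in $L^{q}$ (the conjugate exponent) rather than $L^{2}$. Then Proposition \ref{new Clarkson type inequality}\ref{new first inequality} gives, pointwise in $t$,
\begin{equation*}
\tfrac{1}{2}\Bigl(\lv S_{n-1}+H_n\psrv^{q}+\lv S_{n-1}-H_n\psrv^{q}\Bigr)\leq\Bigl(\lv S_{n-1}\psrv^{p}+\lv H_n\psrv^{p}\Bigr)^{q/p},
\end{equation*}
and Minkowski's inequality in $L^{q/p}[0,1]$ telescopes this to $\bigl(\int_0^1\lv S_n\psrv^{q}dt\bigr)^{p/q}\leq\sum_j\lv H_j\psrv^{p}$ with constant exactly $1$; comparing the $L^{2}$- and $L^{q}$-averages (using $q\geq 2$) then finishes. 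So the paper's proof is self-contained and elementary, while yours trades that for heavier (but reusable) interpolation machinery; you should delete the impossibility remark and either adopt the $L^{q}$-average device or supply references for the $n$-fold interpolation and duality statements.
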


\begin{proof}
We first prove the given inequalities, which together with Remark \ref{different averages of type} imply the type and cotype properties of the space $\ell_{sch}^{p}(\G)$, $1<p<\infty$ (cf. Definition \ref{type def}). 

Let $1<p\leq 2$ and $q$ be the conjugate index of $p.$ We first prove the second inequality of \eqref{first type inequality}. First note that since every $r_{j}(t),$ $1\leq j\leq n-1,$ has identical values on both intervals $[\frac{2k}{2^{n}},\frac{2k+1}{2^{n}}]$ and $[\frac{2k+1}{2^{n}},\frac{2k+2}{2^{n}}]$ for each integer $0\leq k\leq 2^{n-1}-1$, and while $r_{n}(t)$ equals $1$ and $-1$ on these intervals, respectively, it follows that 
$$\int_{\frac{2k}{2^{n}}}^{\frac{2k+1}{2^{n}}}\lv \sum_{j=1}^{n-1}r_{j}(t)H_{j}\pm H_{n}\psrv^{q}dt=\int_{\frac{2k+1}{2^{n}}}^{\frac{2k+2}{2^{n}}}\lv \sum_{j=1}^{n-1}r_{j}(t)H_{j}\pm H_{n}\psrv^{q}dt,\quad 0\leq k\leq 2^{n-1}-1.$$
Hence, by the last equality, we have 
\begin{equation}\begin{split}\label{typeq1}
\int_{0}^{1}&\lv \sum_{j=1}^{n}r_{j}(t)H_{j}\psrv^{q}dt\\
&=\sum_{k=0}^{2^{n-1}-1}\left(\int_{\frac{2k}{2^{n}}}^{\frac{2k+1}{2^{n}}}\lv \sum_{j=1}^{n-1}r_{j}(t)H_{j}+H_{n}\psrv^{q}dt+\int_{\frac{2k+1}{2^{n}}}^{\frac{2k+2}{2^{n}}}\lv \sum_{j=1}^{n-1}r_{j}(t)H_{j}-H_{n}\psrv^{q}dt\right)\\
&=\frac{1}{2}\sum_{k=0}^{2^{n-1}-1}\left(\int_{\frac{2k}{2^{n}}}^{\frac{2k+1}{2^{n}}}\lv \sum_{j=1}^{n-1}r_{j}(t)H_{j}+H_{n}\psrv^{q}dt+\int_{\frac{2k+1}{2^{n}}}^{\frac{2k+2}{2^{n}}}\lv \sum_{j=1}^{n-1}r_{j}(t)H_{j}-H_{n}\psrv^{q}dt\right)\\
&+\frac{1}{2}\sum_{k=0}^{2^{n-1}-1}\left(\int_{\frac{2k+1}{2^{n}}}^{\frac{2k+2}{2^{n}}}\lv \sum_{j=1}^{n-1}r_{j}(t)H_{j}+H_{n}\psrv^{q}dt+\int_{\frac{2k}{2^{n}}}^{\frac{2k+1}{2^{n}}}\lv \sum_{j=1}^{n-1}r_{j}(t)H_{j}-H_{n}\psrv^{q}dt\right)\\
&=\frac{1}{2}\int_{0}^{1}\left(\lv \sum_{j=1}^{n-1}r_{j}(t)H_{j}+H_{n}\psrv^{q}+\lv \sum_{j=1}^{n-1}r_{j}(t)H_{j}-H_{n}\psrv^{q}\right)dt.
\end{split}\end{equation}
Moreover, by Proposition \ref{new Clarkson type inequality} \ref{new first inequality}, one has 
\begin{equation}\begin{split}\label{typeq2}
\frac{1}{2}\int_{0}^{1}&\left(\lv \sum_{j=1}^{n-1}r_{j}(t)H_{j}+H_{n}\psrv^{q}+\lv \sum_{j=1}^{n-1}r_{j}(t)H_{j}-H_{n}\psrv^{q}\right)dt\\
&\leq \int_{0}^{1}\left(\lv \sum_{j=1}^{n-1}r_{j}(t)H_{j}\psrv^{p}+\lv H_{n}\psrv^{p}\right)^{\frac{q}{p}}dt\\
&\leq \left(\left(\int_{0}^{1}\lv \sum_{j=1}^{n-1}r_{j}(t)H_{j}\psrv^{q}dt\right)^{\frac{p}{q}}+\left(\int_{0}^{1}\lv H_{n}\psrv^{q}\right)^{\frac{p}{q}}dt\right)^{\frac{q}{p}}\\
&=\left(\left(\int_{0}^{1}\lv \sum_{j=1}^{n-1}r_{j}(t)H_{j}\psrv^{q}dt\right)^{\frac{p}{q}}+\lv H_{n}\psrv^{p}\right)^{\frac{q}{p}},
\end{split}\end{equation} where the last inequality follows from the Minkowski inequality in the classical $L^{q/p}[0,1]$-space with $q/p\geq 1$. Hence, combining \eqref{typeq1} and \eqref{typeq2}, and repeating the same argument $n-1$ more times, we have 
\begin{equation*}\begin{split}
\left(\int_{0}^{1}\lv \sum_{j=1}^{n}r_{j}(t)H_{j}\psrv^{q}dt \right)^\frac{p}{q}&\leq \left(\int_{0}^{1}\lv \sum_{j=1}^{n-1}r_{j}(t)H_{j}\psrv^{q}dt\right)^{\frac{p}{q}}+\lv H_{n}\psrv^{p}\\
&\leq \left(\int_{0}^{1}\lv \sum_{j=1}^{n-2}r_{j}(t)H_{j}\psrv^{q}dt\right)^{\frac{p}{q}}+\lv H_{n-1}\psrv^{p}+\lv H_{n}\psrv^{p}\\
&\ldots\\
&=\left(\int_{0}^{1}\lv r_{1}(t)H_{1}\psrv^{q}dt\right)^{\frac{p}{q}}+\sum_{j=2}^{n}\lv H_{j}\psrv^{p}\\
&=\left(\frac{1}{2}\int_{0}^{1}\left(\lv -H_{1}\psrv^{q}+\lv H_{1}\psrv^{q}\right)dt\right)^{\frac{p}{q}}+\sum_{j=2}^{n}\lv H_{j}\psrv^{p}\\
&=\sum_{j=1}^{n}\lv H_{j}\psrv^{p}.
\end{split}\end{equation*}
Therefore, 
\begin{equation}\label{typeq3}
\left(\int_{0}^{1}\lv \sum_{j=1}^{n}r_{j}(t)H_{j}\psrv^{q}dt \right)^\frac{1}{q}\leq \left(\sum_{j=1}^{n}\lv H_{j}\psrv^{p}\right)^{\frac{1}{p}}. 
\end{equation}
On the other hand, since $q\geq 2$, the classical embedding result of $L^{p}$-spaces, applied for the function $g(t)=\lv \sum\limits_{j=1}^{n}r_{j}(t)H_{j}\psrv$, implies that 
\begin{equation}\label{typeq4}
\left(\int_{0}^{1}\lv \sum_{j=1}^{n}r_{j}(t)H_{j}\psrv^{2} dt\right)^\frac{1}{2}\leq \left(\int_{0}^{1}\lv \sum_{j=1}^{n}r_{j}(t)H_{j}\psrv^{q}dt\right)^{\frac{1}{q}}.
\end{equation}
Finally, combining \eqref{typeq3} and \eqref{typeq4}, one has the second inequality of \eqref{first type inequality}.

We now consider the first inequality of \eqref{first type inequality}. By \eqref{typeq1} and Theorem \ref{mainth2} \ref{ineq2}, one has 

\begin{equation*}\begin{split}
\int_{0}^{1}\lv \sum_{j=1}^{n}r_{j}(t)H_{j}\psrv^{p}dt&=\frac{1}{2}\int_{0}^{1}\left(\lv \sum_{j=1}^{n-1}r_{j}H_{j}+H_{n}\psrv^{p}+\lv \sum_{j=1}^{n-1}r_{j}(t)H_{j}-H_{n}\psrv^{p}\right)dt \\
&\geq \int_{0}^{1}\left(\lv \sum_{j=1}^{n-1}r_{j}(t)H_{j}\psrv^{2}+c_{p}\lv H_{n}\psrv^{2}\right)^{p/2}dt\\
&\geq \left(\left(\int_{0}^{1}\lv \sum_{j=1}^{n-1}r_{j}(t)H_{j}\psrv^{p}dt\right)^{\frac{2}{p}}+\left(\int_{0}^{1}c_{p}^{p/2}\lv H_{n}\psrv^{p}dt\right)^{\frac{2}{p}}\right)^{\frac{p}{2}}\\
&=\left(\left(\int_{0}^{1}\lv \sum_{j=1}^{n-1}r_{j}(t)H_{j}\psrv^{p}dt\right)^{\frac{2}{p}}+c_{p}\lv H_{n}\psrv^{2}\right)^{\frac{p}{2}},
\end{split}\end{equation*} where the last inequality follows from the reverse Minkowski inequality in the classical $L^{p/2}[0,1]$-space with $p/2\leq 1$. Hence, repeating the same argument $n-1$ more times, we have
\begin{multline*}
\left(\int_{0}^{1}\lv \sum_{j=1}^{n}r_{j}(t)H_{j}\psrv^{p}dt\right)^{\frac{2}{p}}
\geq \left(\int_{0}^{1}\lv \sum_{j=1}^{n-1}r_{j}(t)H_{j}\psrv^{p}dt\right)^{\frac{2}{p}}+c_{p}\lv H_{n}\psrv^{2}\\
\geq \left(\int_{0}^{1}\lv \sum_{j=1}^{n-2}r_{j}(t)H_{j}\psrv^{p}dt\right)^{\frac{2}{p}}+c_{p}\lv H_{n-1}\psrv^{2}+c_{p}\lv H_{n}\psrv^{2}\\
\ldots\\
\geq \left(\int_{0}^{1}\lv r_{1}(t)H_{1}\psrv^{p}dt\right)^{\frac{2}{p}}+c_{p}\sum_{j=2}^{n}\lv H_{j}\psrv^{2}\\
=\left(\frac{1}{2}\int_{0}^{1}\left(\lv -H_{1}\psrv^{p}+\lv H_{1}\psrv^{p}\right)dt\right)^{\frac{2}{p}}+c_{p}\sum_{j=2}^{n}\lv H_{j}\psrv^{2}=c_{p}\sum_{j=1}^{n}\lv H_{j}\psrv^{2}.
\end{multline*}
Therefore,
\begin{equation}\label{typeq5}
    \left(\int_{0}^{1}\lv \sum_{j=1}^{n}r_{j}(t)H_{j}\psrv^{p}dt\right)^{\frac{1}{p}}\geq \sqrt{c_{p}}\left(\sum_{j=1}^{n}\lv H_{j}\psrv^{2}\right)^{\frac{1}{2}}. 
\end{equation}
On the other hand, since $p\leq 2$, the classical embedding result of $L^{p}$-spaces, applied for the function $g(t)=\lv \sum\limits_{j=1}^{n}r_{j}(t)H_{j}\psrv$, implies that 
\begin{equation}\label{typeq6}
\left(\int_{0}^{1}\lv \sum_{j=1}^{n}r_{j}(t)H_{j}\psrv^{2} dt\right)^\frac{1}{2}\geq \left(\int_{0}^{1}\lv \sum_{j=1}^{n}r_{j}(t)H_{j}\psrv^{p}dt\right)^{\frac{1}{p}}.
\end{equation}
Finally, combining \eqref{typeq5} and \eqref{typeq6}, one has the first inequality of \eqref{first type inequality}.

Two inequalities in \eqref{second type inequality} can be proved using the similar arguments as above together with Theorem \ref{new Clarkson type inequality} \ref{new second inequality} and Theorem \ref{mainth2} \ref{ineq2}, hence, we omit it.

Finally, the inequalities \eqref{first type inequality} and \eqref{second type inequality} together with Remark \ref{different averages of type} imply the type and cotype assertions.
\end{proof}

\section{Family of $\ell^{p}$-spaces $\ell^{p}(\widehat{G})$ based on Hilbert-Schmidt ideal}

In this section, we prove similar geometric properties as in the previous section for the space $\ell^{p}(\widehat{G}),$ $1< p<\infty.$ One of the main results of this section is the following result similar to Theorem \ref{mainth0}. 

\begin{theorem}\label{mainth1} The space $\ell^{p}(\widehat{G})$ is uniformly convex and uniformly smooth for $1<p<\infty$.
\end{theorem}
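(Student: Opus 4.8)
The plan is to mirror the proof of Theorem~\ref{mainth0}, exploiting the fact that the Clarkson type inequalities for $\ell^{p}(\G)$ recorded in Proposition~\ref{Clarkson type inequality} have exactly the same shape as those in Proposition~\ref{new Clarkson type inequality} for $\ell_{sch}^{p}(\G)$. Concretely, I would first establish the direct analogue of Lemma~\ref{lemma0}: for $1<p\le 2$ the lower bound $\delta_{\ell^{p}(\G)}(\varepsilon)\ge \varepsilon^{q}/(q\,2^{q})$ on the modulus of convexity together with the upper bound $\rho_{\ell^{p}(\G)}(t)\le t^{p}/p$ on the modulus of smoothness (with the roles of $p$ and $q$ interchanged when $2\le p<\infty$), where $q$ is the conjugate index. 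Granting this lemma, both assertions follow at once from Definition~\ref{uniform convexity and smoothness}: the convexity bound gives $\delta_{\ell^{p}(\G)}(\varepsilon)>0$ for every $\varepsilon>0$, while the smoothness bound yields $\rho_{\ell^{p}(\G)}(t)/t\le t^{p-1}/p\to 0$ (resp. $t^{q-1}/q\to 0$) as $t\to 0$.

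For the modulus of convexity when $1<p\le 2$, I would take $H_{1},H_{2}\in\ell^{p}(\G)$ with $\lv H_{1}\prv=\lv H_{2}\prv=1$ and $\lv H_{1}-H_{2}\prv=\varepsilon$, and apply Proposition~\ref{Clarkson type inequality}(i) (with the pair $(H_{1},H_{2})$) to get $\big(\lv\tfrac{H_{1}+H_{2}}{2}\prv^{q}+(\varepsilon/2)^{q}\big)^{1/q}\le 1$. Rearranging and invoking Bernoulli's inequality $(1+x)^{s}\le 1+sx$ (for $0\le s\le 1$, $x\ge -1$) with $s=1/q$ then gives $\lv\tfrac{H_{1}+H_{2}}{2}\prv\le (1-(\varepsilon/2)^{q})^{1/q}\le 1-\varepsilon^{q}/(q\,2^{q})$, and taking the infimum over admissible pairs yields the claimed lower bound on $\delta_{\ell^{p}(\G)}(\varepsilon)$.

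For the modulus of smoothness when $1<p\le 2$, I would fix $H_{1},H_{2}$ with $\lv H_{1}\prv=\lv H_{2}\prv=1$ and use convexity of $x\mapsto x^{q}$ (legitimate since $q\ge 2$) applied to $x_{1}=\lv H_{1}+tH_{2}\prv$ and $x_{2}=\lv H_{1}-tH_{2}\prv$; this bounds the averaged sum by $\tfrac{2}{2^{1/q}}\big(\lv\tfrac{H_{1}+tH_{2}}{2}\prv^{q}+\lv\tfrac{H_{1}-tH_{2}}{2}\prv^{q}\big)^{1/q}$. Applying Proposition~\ref{Clarkson type inequality}(i) with the pair $(H_{1},tH_{2})$ produces the majorant $(1+t^{p})^{1/p}$, and one final application of Bernoulli's inequality gives $\rho_{\ell^{p}(\G)}(t)\le t^{p}/p$. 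The case $2\le p<\infty$ is entirely symmetric, using Proposition~\ref{Clarkson type inequality}(ii) in place of (i) and interchanging $p$ and $q$, so I would merely indicate this rather than repeat the computation.

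I do not anticipate a genuine obstacle: the whole argument is a faithful transcription of the proofs of Lemma~\ref{lemma0} and Theorem~\ref{mainth0}, with Proposition~\ref{new Clarkson type inequality} replaced by the already-available Proposition~\ref{Clarkson type inequality}. The only points deserving a moment of care are the arithmetic $2\cdot 2^{-1/q}\cdot 2^{-1/p}=1$ (using $1/p+1/q=1$) that collapses the smoothness constant to exactly $(1+t^{p})^{1/p}$, and the observation that the Hilbert--Schmidt weights $\Dim(\xi)^{p(2/p-1/2)}$ appearing in $\ell^{p}(\G)$ create no additional difficulty -- which is automatic, since the modulus estimates use only the norm axioms and the Clarkson inequalities, never the explicit form of the norm.
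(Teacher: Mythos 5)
Your proposal is correct and follows essentially the same route as the paper: the paper deduces Theorem \ref{mainth1} from Lemma \ref{lemma1}, whose proof it omits as being a verbatim repetition of the proof of Lemma \ref{lemma0} with Proposition \ref{new Clarkson type inequality} replaced by Proposition \ref{Clarkson type inequality}. You have simply written out the details that the paper leaves implicit, and your arithmetic (including the collapse $2\cdot 2^{-1/q}\cdot 2^{-1/p}=1$) checks out.
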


Note that the reflexivity of $\ell^{p}(\G)$ for $1<p<\infty$ also follows from Proposition \ref{MilmanPettis}. Another proof of this fact is given in \cite[Theorem 2]{tulenov2018clarkson}. To prove Theorem \ref{mainth1}, we first present the analogue of Lemma \ref{lemma0}. 

\begin{lemma}\label{lemma1}
Let $1<p<\infty$, $0<\varepsilon\leq2$ and $t>0.$ Let $q$ be a conjugate index of $p$, i.e., $\frac{1}{p}+\frac{1}{q}=1.$
\begin{enumerate}[label=(\roman*)]
    \item \label{lemma1 inequality 1} If $1<p\leq 2,$ then 
    $$\delta_{\ell^{p}(\widehat{G})}(\varepsilon)\geq \frac{\varepsilon^{q}}{q\cdot 2^{q}},\quad \rho_{\ell^{p}(\widehat{G})}(t)\leq \frac{t^{p}}{p}.$$
    \item \label{lemma1 inequality 2}If $2<p< \infty,$ then 
    $$\delta_{\ell^{p}(\widehat{G})}(\varepsilon)\geq \frac{\varepsilon^{p}}{p\cdot 2^{p}},\quad \rho_{\ell^{p}(\widehat{G})}(t)\leq \frac{t^{q}}{q}.$$
\end{enumerate}
\end{lemma}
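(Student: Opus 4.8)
The plan is to follow, essentially verbatim, the argument already used to establish Lemma \ref{lemma0}. The decisive observation is that the Clarkson type inequalities for $\ell^{p}(\G)$ are available in exactly the same form as for $\ell_{sch}^{p}(\G)$: the role played by Proposition \ref{new Clarkson type inequality} in the proof of Lemma \ref{lemma0} is here played by Proposition \ref{Clarkson type inequality}, which is quoted from \cite{tulenov2018clarkson}. Every other ingredient of that proof --- normalization of the vectors, Bernoulli's inequality $(1+x)^{t}\leq 1+tx$ for $0\leq t\leq 1$ and $x\geq -1$, and the convexity of the power function $x\mapsto x^{r}$ --- refers only to the scalar norm and is therefore insensitive to whether the underlying operator ideal is Hilbert--Schmidt or a general Schatten--von Neumann ideal.

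For part \ref{lemma1 inequality 1}, I would first treat the modulus of convexity. Given $H_{1},H_{2}\in\ell^{p}(\G)$ with $\lv H_{1}\prv=\lv H_{2}\prv=1$ and $\lv H_{1}-H_{2}\prv=\varepsilon$, applying Proposition \ref{Clarkson type inequality} (i) gives
\begin{equation*}
\left(\lv \frac{H_{1}+H_{2}}{2}\prv^{q}+\left(\frac{\varepsilon}{2}\right)^{q}\right)^{1/q}\leq \left(\frac{1}{2}\left(\lv H_{1}\prv^{p}+\lv H_{2}\prv^{p}\right)\right)^{1/p}=1,
\end{equation*}
so that $\lv \frac{H_{1}+H_{2}}{2}\prv\leq\bigl(1-(\varepsilon/2)^{q}\bigr)^{1/q}\leq 1-\frac{\varepsilon^{q}}{q\cdot 2^{q}}$ by Bernoulli's inequality. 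Taking the infimum over all admissible $H_{1},H_{2}$ then yields $\delta_{\ell^{p}(\G)}(\varepsilon)\geq \varepsilon^{q}/(q\cdot 2^{q})$. For the modulus of smoothness I would use the convexity of $x\mapsto x^{q}$ (valid since $q\geq 2$) with $x_{1}=\lv H_{1}+tH_{2}\prv$ and $x_{2}=\lv H_{1}-tH_{2}\prv$ to obtain
\begin{equation*}
\frac{\lv H_{1}+tH_{2}\prv+\lv H_{1}-tH_{2}\prv}{2}\leq \frac{2}{2^{1/q}}\left(\lv \frac{H_{1}+tH_{2}}{2}\prv^{q}+\lv \frac{H_{1}-tH_{2}}{2}\prv^{q}\right)^{1/q},
\end{equation*}
and then apply Proposition \ref{Clarkson type inequality} (i) to the right-hand side with the pair $(H_{1},tH_{2})$; after the prefactor $2/2^{1/q}$ is cancelled using $1/p+1/q=1$, this gives the bound $(1+t^{p})^{1/p}$, and a final application of Bernoulli's inequality produces $\rho_{\ell^{p}(\G)}(t)\leq t^{p}/p$.

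Part \ref{lemma1 inequality 2} is handled identically, the only change being that one invokes Proposition \ref{Clarkson type inequality} (ii) instead of part (i) and uses the convexity of $x\mapsto x^{p}$; this simply interchanges the roles of $p$ and $q$ and yields the stated estimates $\delta_{\ell^{p}(\G)}(\varepsilon)\geq \varepsilon^{p}/(p\cdot 2^{p})$ and $\rho_{\ell^{p}(\G)}(t)\leq t^{q}/q$.

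I do not anticipate a genuine obstacle: the entire analytic difficulty has already been absorbed into the Clarkson inequalities of Proposition \ref{Clarkson type inequality}, and what remains is a transcription of the proof of Lemma \ref{lemma0} with $\ell_{sch}^{p}(\G)$ replaced by $\ell^{p}(\G)$ throughout. The only point deserving a moment's care is to confirm that, in the smoothness estimate, the Clarkson inequality is applied to the correctly scaled pair of vectors so that the factor $2/2^{1/q}$ is exactly cancelled; this is routine.
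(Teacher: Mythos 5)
Your proposal is correct and coincides with the paper's intended argument: the paper's proof of this lemma consists precisely of the remark that one repeats the proof of Lemma \ref{lemma0} with the Clarkson inequalities of Proposition \ref{Clarkson type inequality} in place of Proposition \ref{new Clarkson type inequality}, which is exactly the transcription you carry out. All the individual steps you spell out (normalization, Bernoulli's inequality, convexity of $x\mapsto x^{q}$ resp.\ $x\mapsto x^{p}$, and the cancellation of the factor $2/2^{1/q}$ via $1/p+1/q=1$) check out.
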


\begin{proof}
The argument mainly follows the same line as of reasoning of the proof of Lemma \ref{lemma0}, hence, we omit it.
\end{proof}

\begin{proof}[Proof of Theorem \ref{mainth1}] Let $1<p\leq 2$. Then, by Lemma \ref{lemma1} \ref{lemma1 inequality 1}, one has 
$$\delta_{\ell^{p}(\widehat{G})}(\varepsilon)\leq \frac{\varepsilon^{q}}{q\cdot 2^{q}},\quad \forall \varepsilon>0$$
and
$$\lim_{t\rightarrow0}\frac{\rho_{\ell^{p}(\widehat{G})}}{t}\leq \lim_{t\rightarrow0}\frac{t^{p-1}}{p}=0.$$
Hence, $\ell^{p}(\widehat{G})$ is uniformly convex and uniformly smooth for $1<p
\leq 2$. The case when $2<p<\infty$ analogously follows from Lemma \ref{lemma1} \ref{lemma1 inequality 2}, hence, we omit it.
\end{proof}

\begin{remark}
Note that using the exact same method as in the proof of Theorem \ref{mainth2}, one can also prove similar inequalities with the same constants $C_{p}$ and $c_{p}$ for $\ell^{p}(\G),$ $1<p<\infty,$ spaces based on the Hilbert-Schmidt ideal. Therefore, combining this result and Clarkson inequalities from \cite[Theorem 3]{tulenov2018clarkson} (or see Proposition \ref{Clarkson type inequality}), one can also apply the methods of Theorem \ref{type} and obtain that the space $\ell^{p}(\G)$, $1<p<\infty,$ is of type $\min\{2, p\}$ and cotype $\max\{2, p\}.$      
\end{remark}

\subsection*{Acknowledgements}

The first author was partially supported by the grant No. AP22784356 of the Science Committee of the Ministry of Science and Higher Education of the Republic of Kazakhstan.
The second author was supported by the EPSRC grants EP/R003025/2 and EP/V005529/1 and by FWO grant G011522N. The second author was also supported by Odysseus and Methusalem grants (01M01021 (BOF Methusalem) and 3G0H9418 (FWO Odysseus)) at Ghent University.

\end{document}